\newtheorem{thm}{Theorem}[section]
\newtheorem{prop}{Proposition}[section]
\newtheorem{lem}{Lemma}[section]
\newtheorem{cor}{Corollary}[section]
\newtheorem{dfn}{Definition}
\newtheorem{exam}{Example}
\newtheorem{rmk}{Remark}
\newcommand{\IE}{\textit{i.e., }}
\newcommand{\CF}{\textit{cf.\ }}	
\newcommand{\EG}{\textit{e.g., }}
\newcommand{\IntegerRing}{ {\mathbb Z} }
\newcommand{\RationalField}{ {\mathbb Q} }
\newcommand{\RealField}{ {\mathbb R} }
\newcommand{\ComplexField}{ {\mathbb C} }
\newcommand{\Idele}[1]{ {{\mathbb I}_{#1}} }
\newcommand{\kreflex}{ K^{*} }
\newcommand{\hreflex}{ H^{*} }
\newcommand{\AbelianExt}[1]{ {{#1}^{ab}} }
\newcommand{\Kernel}[1]{ {{\textrm Ker}\left(#1\right)} }
\newcommand{\Indrp}[2]{ {{\rm Ind}^{#1}_{#2}} }
\newcommand{\IndZtwo}{ {\Indrp{G}{H_0}(\IntegerRing_2)} }
\newcommand{\Galgp}[2]{ {\textrm{Gal}({#1}/{#2})} }
\newcommand{\abs}[1]{ {\left\lvert #1 \right\rvert} }
\newcommand{\Parths}[1]{ {\left(#1\right)} }
\newcommand{\CG}[3]{ {H^{#1}( {#2}, {#3})} }
\newcommand{\Vermap}[1]{ {\textrm{Ver}\left({#1}\right)} }
\newcommand{\ProjLim}{\mathop{\varprojlim}\limits}
\newcommand{\QuadExt}[2]{#1( \sqrt{#2} )}
\newcommand{\FieldExt}[2]{#1( #2 )}
\newcommand{\SetN}{\{ \varphi_1 H_0, \cdots, \varphi_N H_0 \}}
\newcommand{\Jodd}{ {J_{\textit{odd}}} }
\newcommand{\casea}{%
	\begin{cases}
		0  &  \text{if } \tau_1^{-1}\varphi_i\ \in \Phi  \\
		1  &  \text{if } \tau_1^{-1}\varphi_i\ \in \iota \Phi  
	\end{cases}%
}
\newcommand{\caseb}{%
	\begin{cases}
		0  &  \text{if } r_\Phi(\tau_1)(\varphi_i H_0) = 0  \\
		1  &  \text{if } r_\Phi(\tau_1)(\varphi_i H_0) = 1 
	\end{cases}%
}
\begin{document}




\title{On some algebraic properties of CM-types of CM-fields and their reflexes}




\author{Ryoko Oishi-Tomiyasu}

\maketitle

\begin{abstract}
The purpose of this paper is to show that 
the reflex fields of a given CM-field $K$ 
is equipped with a certain combinatorial structure that has not been exploited yet.

The first theorem is on the abelian extension generated by the moduli and the $b$-torsion points of abelian varieties of CM-type, for any natural number $b$.
It is a generalization of the result by Wei on the abelian extension obtained by the moduli and all the torsion points.
The second theorem gives a character identity of the Artin $L$-function of a CM-field $K$ and the reflex fields of $K$.
The character identity pointed out by Shimura in \cite{Shimuraii} follows from this.

The third theorem states that some Pfister form is isomorphic to the orthogonal sum of $Tr_{\kreflex(\Phi)/\RationalField}(\bar{a}a)$ defined on the reflex fields $\bigoplus_{\Phi \in \Lambda} \kreflex(\Phi)$. 
This result suggests that the theory of complex multiplication on abelian varieties has a relationship with the multiplicative forms in higher dimension. 
\end{abstract}





%
%
%
%
\section*{Introduction}
\label{}
The moduli of abelian varieties of a CM-type and their torsion points generate an abelian extension, not of the field of complex multiplication,
but of a reflex field of the field.
This is the theory of Shimura and Taniyama, which determines the Galois group of the abelian extension as the kernel of the half norm map between the idele groups (\CF \cite{Shimurai}, Main Theorem 1, 2).

After that, in order to give a more direct description of the Galois group,
Shimura \cite{Shimuraiii} and Ovseevich \cite{Ovseevich} investigated into the abelian extension generated by the class-field of the maximal totally real subfield
of the CM-field $K$ and complex multiplication.
There is also the work by Kubota on the abelian extension generated by one CM-type of $K$ \cite{Kubota}.
On the other hand, 
Wei proved a theorem on the abelian extension generated only by complex multiplication \cite{Wei}.
Our first theorem (\ref{thm:main}) is a generalization of the theorem by Wei, which deals the abelian extension generated by the moduli and the $b$-torsions for a natural number $b$.
The proof uses a combinatorial property of the reflex fields (Corollary \ref{cor:2^N-1 on idele}).

In section \ref{Character identity}, a set of CM-fields $K(I)$ including $K$ is proposed as a dual set of the reflex fields of $K$.
A character identity holds between these two sets (Theorem \ref{thm:character identity}).
The character identity mentioned in \cite{Shimuraii} is obtained from this.
The set of $K(I)$ consists of only conjugate fields of $K$ over $\RationalField$, if the degree of $K$ is $2$, $4$ or $8$.

The third theorem (\ref{thm:Pfister form and reflex fields}) states the relation between reflex fields and some Pfister form.
Let $K_0$ be the maximal totally real field of a given CM-field $K$. 
Then, $K = \QuadExt{K_0}{-d}$ for some totally positive element $d \in K_0$.
Using the embeddings $\varphi_1, \cdots, \varphi_N : K_0 \hookrightarrow \ComplexField$,
the Pfister form $q := \langle 1, \varphi_1(d) \rangle \otimes \cdots \otimes \langle 1, \varphi_N(d) \rangle$ 
is defined over the Galois closure of $K_0$ over $\RationalField$.
The third theorem states that $q$ is isomorphic to the orthogonal sum of $2^{-N} Tr_{\kreflex(\Phi)/\RationalField}(\bar{a}a)$
defined on the reflex fields $\kreflex(\Phi)$.

In the case of imaginary quadratic fields $\RationalField(\sqrt{-d})$, the Pfister form $\langle 1, d \rangle$
equals the quadratic form on $\RationalField(\sqrt{-d})$ defined by the norm $N_{\RationalField(\sqrt{-d}) / \RationalField}$.
Theorem \ref{thm:Pfister form and reflex fields} is a generalization of this to higher dimension.

Some basic notations and definitions are given in Section \ref{Some algebraic properties of CM-fields and their reflex fields}.
Each following section depends on the content of Section \ref{Some algebraic properties of CM-fields and their reflex fields}.

%
%
\section{Some algebraic properties of CM-fields and their reflex fields} 
\label{Some algebraic properties of CM-fields and their reflex fields}
In this section, several results on the reflex fields and their Galois groups are shown.
They are used to prove the theorems in the remaining sections. 

%
%
\subsection{Basic notations}
\label{}
Here, recall the definitions of CM-types, their dual CM-types;
let $K$ be a CM field and $K_0$ be its maximal totally real field.  Their Galois closures are denoted by $K^c$  and $K_0^c$ respectively.  We denote the Galois groups;
\begin{eqnarray*}
		G := \Galgp{K^c}{\RationalField},
		\hspace{5mm}
		G_0 := \Galgp{K_0^c}{\RationalField},
\end{eqnarray*}
\vspace*{-10mm}
\begin{eqnarray*}
		H := \Galgp{K^c}{K},
		\hspace{5mm}
		H_0 := \Galgp{K^c}{K_0},
		\hspace{5mm}
		C := \Galgp{K^c}{K_0^c},
\end{eqnarray*}
\noindent
where $\RationalField$ is the rational number field.  There is an exact sequence:
\[
	\begin{CD}
		1 @>>> C @>>> G @>>> G_0 @>>> 1.
	\end{CD}
\]

Since $K^c$ is also a CM-field, the complex conjugation $\iota$ is a central element of $G$.
The pair $(K, \Phi)$ is called a CM-type of $K$, when $\Phi$ is a set of the embeddings of $K \hookrightarrow \ComplexField$ over $\RationalField$ such that $\Phi$ and $\iota\Phi$ are disjoint, and every embedding is contained in $\Phi \cup \iota\Phi$.  
Therefore, $\Phi$ and $\Phi \cup \iota\Phi$ correspond one-to-one to the left cosets $G/H_0$, $G/H$ respectively.

For a CM-type $(K, \Phi)$ and a set $S_\Phi := \bigcup_{\varphi \in \Phi} \varphi H$, define a subgroup $\hreflex(\Phi)$ of $G$ by
\begin{eqnarray}
	\hreflex(\Phi) := \{ \sigma \in \Galgp{K^c}{\RationalField} : \sigma S_\Phi = S_\Phi \}.
\end{eqnarray}
Then, $\iota \notin \hreflex(\Phi)$, and $\hreflex(\Phi) S_\Phi = S_\Phi$.
Hence, $S_\Phi = \hreflex(\Phi) \psi_1 \cup \cdots \cup \hreflex(\Phi) \psi_M$, which is a disjoint union of right cosets.
Let $\kreflex(\Phi)$ be the fixed subfield of $K^c$ by $\hreflex(\Phi)$, then, $\kreflex(\Phi)$ is a CM-field.
If we set $\Phi^* := \{ \psi_1^{-1} |_{\kreflex(\Phi)}, \cdots, \psi_M^{-1} |_{\kreflex(\Phi)} \} $, $(\kreflex(\Phi),\Phi^*)$ is also a CM-type.
$(\kreflex(\Phi),\Phi^*)$ is called the dual CM-type of $(K, \Phi)$, and $\kreflex(\Phi)$ is called the reflex field of $(K, \Phi)$.

%
%
\subsection{On the structure of the Galois groups of CM-fields and their reflex fields} 
\label{subsection:On the structure of the Galois groups of CM-fields and their reflex fields}
In this section, we define a $1$-cocycle map 
 and an embedding $\Galgp{K^c}{\RationalField}  \hookrightarrow  \IndZtwo \rtimes G_0$ to see the structure of the Galois group of the reflex fields.
The results in this section are already discribed in \cite{Dodson} in a similar fashion. 
Here, we reformulate the discriptions in \cite{Dodson} to clarify the relation between the structure and the action of $G$ on the CM-types, for the proofs of the following theorems. 

Let $H_0$ act on $\IntegerRing_2$ trivially.
The induced module $\IndZtwo$ can be regarded as a set of maps ${\rm Map}(G/H_0, \IntegerRing_2)$, on which $G$ acts by $\sigma(f)(\tau H_0) = f(\sigma^{-1}\tau H_0)$.
Since $K_0^c$ is the composite field of all the conjugate fields of $K_0$ over $\RationalField$, 
$C = \bigcap_{\tau \in G} \tau H_0 \tau^{-1}$.
Hence, the actions of $C$ on $G/H_0$ and $\IndZtwo$ are trivial.  Therefore, $G_0 = G/C$ also acts on $G/H_0$ and $\IndZtwo$.

As an abelian group, we have $C \cong (\IntegerRing_2)^m$ for some integer $m$.
In fact, since $K = K_0(\sqrt{-d})$ for some totally positive $d \in K_0$, $K^c$ equals $K_0^c( \sqrt{-\varphi_1(d)}, \cdots, \sqrt{-\varphi_N(d)} )$, 
where $\varphi_1, \cdots, \varphi_N$ are the embeddings of $K_0 \hookrightarrow \ComplexField$ over $\RationalField$.

The following lemma shows that 
when $G$ acts on $C$ by conjugation $\tau \cdot a = \tau a \tau^{-1}$ for $\tau \in G$ and $a \in C$,
$C$ can be embedded into $\IndZtwo$ as a $G$-module.

\begin{lem}\label{lem:embedding r}
	Using a totally positive $d$ such that $K = K_0(\sqrt{-d})$, define a map $r : C \longrightarrow  \IndZtwo$ by
	\begin{eqnarray}
		r(a)(\varphi_i H_0) = 
		\begin{cases}
			0  & \text{if } a(\sqrt{-\varphi_i(d)}) = \sqrt{-\varphi_i(d)}, \\
			1  & \text{otherwise.}
		\end{cases}
	\end{eqnarray}
	Then, $r$ does not depend on the choice of $d$, and it is an injective $G$-homomorphism.
\end{lem}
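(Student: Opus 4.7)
\medskip

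\noindent\textbf{Plan of proof.} The strategy is to verify in turn the four assertions packed into the statement: (a) well-definedness of $r(a)$ as a function on the coset space $G/H_0$, (b) independence of the choice of $d$, (c) the group- and $G$-equivariance properties, and (d) injectivity. Nothing deep is needed; everything follows from the observation that an element of $C = \Galgp{K^c}{K_0^c}$ fixes $K_0^c$, hence fixes each $-\varphi_i(d) \in K_0^c$, and therefore sends $\sqrt{-\varphi_i(d)}$ to $\pm \sqrt{-\varphi_i(d)}$. The map $r(a)$ simply records these signs.

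For (a), I would note that if $\sigma H_0 = \varphi_i H_0$ in $G/H_0$, then $\sigma$ and $\varphi_i$ agree on $K_0$ (since $H_0$ fixes $K_0$), so $\sigma(d) = \varphi_i(d)$ and the definition is insensitive to the representative. For (b), if $K = \QuadExt{K_0}{-d} = \QuadExt{K_0}{-d'}$ with both $d,d'$ totally positive, then $\sqrt{-d'} = c\sqrt{-d}$ for some $c \in K_0$; applying $\varphi_i$ gives $\sqrt{-\varphi_i(d')} = \varphi_i(c)\sqrt{-\varphi_i(d)}$, and since $\varphi_i(c) \in K_0^c$ is fixed by $a \in C$, the two definitions produce the same sign.

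For (c), the composition $ab$ acts on $\sqrt{-\varphi_i(d)}$ by multiplying the two signs of $a$ and $b$, which corresponds to addition in $\IntegerRing_2$; hence $r$ is a group homomorphism. For $G$-equivariance, I compute $r(\tau a \tau^{-1})(\varphi_i H_0)$ by analyzing $a(\tau^{-1}\sqrt{-\varphi_i(d)})$. Since $\tau^{-1}\sqrt{-\varphi_i(d)}$ squares to $-(\tau^{-1}\varphi_i)(d)$, it equals $\pm \sqrt{-(\tau^{-1}\varphi_i)(d)}$; the ambient sign is killed by applying $\tau a \tau^{-1}$ and comparing, leaving
\[
	r(\tau a \tau^{-1})(\varphi_i H_0) \;=\; r(a)(\tau^{-1}\varphi_i H_0) \;=\; (\tau \cdot r(a))(\varphi_i H_0),
\]
which is exactly the $G$-action on $\IndZtwo$ defined above. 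This sign-cancellation is the one technical point I would treat carefully.

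Finally, (d) is immediate: if $r(a) \equiv 0$, then $a$ fixes every $\sqrt{-\varphi_i(d)}$, and since $a$ already fixes $K_0^c$, it fixes the generators of $K^c = K_0^c(\sqrt{-\varphi_1(d)}, \ldots, \sqrt{-\varphi_N(d)})$, forcing $a = 1$. The main (and essentially only) obstacle I foresee is just bookkeeping in step (c), making sure the identification of embeddings $\varphi_i : K_0 \hookrightarrow \ComplexField$ with cosets in $G/H_0$ is handled consistently with the conjugation action of $G$ on $C$.
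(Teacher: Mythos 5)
Your proposal is correct and follows essentially the same route as the paper: the $G$-equivariance is obtained by the same sign-cancellation computation (the paper's $(-1)^{n_i}$ factor), and injectivity comes, as in the paper, from $K^c = K_0^c(\sqrt{-\varphi_1(d)}, \cdots, \sqrt{-\varphi_N(d)})$. The extra details you supply (well-definedness on cosets and independence of $d$ via $\sqrt{-d'} = c\sqrt{-d}$ with $c \in K_0$) are points the paper dismisses as clear, so there is no substantive difference.
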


\begin{proof}
	It is clear that $r$ does not depend on the choice of $d$.
	Furthermore, it is an injective homomorphism of additive groups, 
	because $K^c = K_0^c(\sqrt{-\varphi_1(d)}, \cdots, \sqrt{-\varphi_N(d)})$.  
	For any $\tau \in G$, 
	if we put $(-1)^{n_i} = \tau^{-1} \left( \sqrt{-\varphi_i(d)} \right) / \sqrt{-\tau^{-1} \varphi_i(d)}$, then
	\begin{eqnarray*}
		\tau a \tau^{-1} ( \sqrt{-\varphi_i(d)} )
 						&=& (-1)^{n_i} \tau a ( \sqrt{-\tau^{-1}\varphi_i(d)} ) \nonumber \\
						&=& (-1)^{n_i+r(a)(\tau^{-1} \varphi_i H_0)} \tau ( \sqrt{-\tau^{-1}\varphi_i(d)} ) \nonumber \\
						&=& (-1)^{r(a)(\tau^{-1} \varphi_i H_0)} \sqrt{-\varphi_i(d)}.
	\end{eqnarray*}
	So, $(\tau \cdot r)(a)(\varphi_i(d)) := r(a)(\tau^{-1} \varphi_i H_0) = r(\tau a \tau^{-1})(\varphi_i H_0)$. Hence, $r$ is a $G$-homomorphism.  
\end{proof}

The embedding $r$ given in Lemma \ref{lem:embedding r} can be extended to a $1$-cocycle map $G \longrightarrow \IndZtwo$.

\begin{dfn}\label{lem:1-cocycle r}
	For a CM-type $(K, \Phi)$ with $\Phi = \{ \varphi_1, \cdots, \varphi_N \}$, define a map $r_\Phi : G \longrightarrow \IndZtwo$ to be 
	\begin{eqnarray}
		r_\Phi(\tau)(\varphi_i H_0) := 
		\begin{cases}
			0  & \text{if } \tau^{-1}\varphi_{i} \in \Phi  \\
			1  & \text{otherwise.}
		\end{cases}
	\end{eqnarray} 
\end{dfn}


\begin{prop}\label{prop:1-cocycle r}
	The map $r_\Phi$ is a $1$-cocycle that equals $r$ on $C$.
	The cohomology class $[r_\Phi]$ in $\CG{ 1 }{ G_0 }{ \IndZtwo }$ is determined independently from the choice of the CM-type $(K, \Phi)$.
	Moreover, $(K, \Phi) \mapsto r_\Phi$ gives a map from the CM-types of $K$ onto the the cohomology class $[r_\Phi]$.
	For any two CM-types $(K, \Phi)$ and $(K, \Phi^\prime)$, $r_\Phi = r_{\Phi^\prime}$ if and only if $\Phi^\prime = \Phi$ or $\iota\Phi$.
\end{prop}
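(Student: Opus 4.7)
My plan is to recast $r_\Phi$ using the section $s_\Phi \colon G/H_0 \to G/H$ that picks, for each $H_0$-coset $xH_0$, its unique representative lying in $\Phi$. Because every $H_0$-coset in $G/H$ splits as $\{s_\Phi(x),\, \iota s_\Phi(x)\}$, the section $s_\Phi$ encodes $\Phi$ faithfully, and the defining formula for $r_\Phi$ is equivalent to the equality
\[
\tau^{-1} s_\Phi(x) \;=\; \iota^{r_\Phi(\tau)(x)}\, s_\Phi(\tau^{-1}x) \qquad \text{in } G/H.
\]
With this reformulation every assertion of the proposition reduces to short manipulations of exponents of $\iota$, and the centrality of $\iota$ in $G$ (guaranteed by $K^c$ being CM) provides the commutation relation needed at each step.

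For the cocycle condition I would expand $(\sigma\tau)^{-1} s_\Phi(x)$ in two ways, once directly via the identity above applied to $\sigma\tau$, and once by first applying it for $\sigma$ and then for $\tau$; commuting the central $\iota$ past $\tau^{-1}$ in the second calculation lets the two exponents add, yielding the 1-cocycle identity $r_\Phi(\sigma\tau) = r_\Phi(\sigma) + \sigma\cdot r_\Phi(\tau)$. The identity $r_\Phi|_C = r$ then follows by tracing through $a(\sqrt{-\varphi_i(d)}) = \varphi_i\bigl((\varphi_i^{-1} a \varphi_i)(\sqrt{-d})\bigr)$ for $a \in C$: the conjugate $\varphi_i^{-1} a \varphi_i \in C \subset H_0 = H \cup \iota H$ lies in $H$ (and so fixes $\sqrt{-d}$) precisely when $a^{-1}\varphi_i$ lies in $\Phi$, which is the same dichotomy as $r_\Phi(a)(\varphi_i H_0)$.

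For the cohomology class, I would compare two CM-types by writing $s_{\Phi'}(x) = \iota^{\epsilon(x)} s_\Phi(x)$ for a unique $\epsilon \in \mathrm{Map}(G/H_0, \IntegerRing_2)$. Substituting into the defining identity for $r_{\Phi'}$ and again pulling $\iota$ past $\tau^{-1}$ gives
\[
r_{\Phi'}(\tau)(x) - r_\Phi(\tau)(x) \;\equiv\; \epsilon(x) - \epsilon(\tau^{-1}x) \pmod{2},
\]
so $r_{\Phi'} - r_\Phi$ is the coboundary $-d\epsilon$, proving $[r_\Phi] = [r_{\Phi'}]$. Conversely, any $\epsilon$ produces a CM-type $\Phi'$ via $s_{\Phi'}(x) := \iota^{\epsilon(x)} s_\Phi(x)$, for the resulting set of $H$-coset representatives automatically satisfies $\Phi' \cap \iota\Phi' = \emptyset$ and $\Phi' \cup \iota\Phi' = G/H$. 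These two directions together show that the map $(K,\Phi) \mapsto r_\Phi$ hits every cocycle in the class $[r_\Phi]$.

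The final uniqueness claim then reduces to the observation that $r_\Phi = r_{\Phi'}$ forces $d\epsilon = 0$, i.e.\ $\epsilon$ is $G$-invariant. Since $G$ acts transitively on $G/H_0$, such an $\epsilon$ must be constant, leaving only $\epsilon \equiv 0$ (so $\Phi' = \Phi$) and $\epsilon \equiv 1$ (so $\Phi' = \iota\Phi$). The main bookkeeping obstacle throughout is disentangling the implicit extension of $r_\Phi(\tau)$ from the initial formula on $\{\varphi_i H_0\}$ to the whole of $G/H_0$, and making sure every movement of $\iota$ past an arbitrary element of $G$ is explicitly justified by centrality rather than by an ad hoc identity; the section viewpoint makes both of these bookkeeping steps essentially mechanical.
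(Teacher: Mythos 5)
Your proposal is correct and follows essentially the same route as the paper: the section identity $\tau^{-1}s_\Phi(x)=\iota^{r_\Phi(\tau)(x)}s_\Phi(\tau^{-1}x)$ is just a repackaging of the paper's case analysis of whether $\tau^{-1}\varphi_i$ lies in $\Phi$ or $\iota\Phi$, and the parametrization $s_{\Phi'}=\iota^{\epsilon}s_\Phi$ with $r_{\Phi'}-r_\Phi$ a coboundary, plus the constancy argument for the last claim, is exactly the paper's computation with $\Phi_f$. The only nitpick is the identity $a(\sqrt{-\varphi_i(d)})=\varphi_i\bigl((\varphi_i^{-1}a\varphi_i)(\sqrt{-d})\bigr)$, which holds only up to a sign depending on the choice of square root, but that sign cancels in the fixed/negated dichotomy, so the conclusion $r_\Phi|_C=r$ is unaffected.
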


\begin{proof}
For $a \in C$, $a \varphi_i H = \varphi_i H$ or $\iota \varphi_i H$, since $a \varphi_i H_0 = \varphi_i H_0$.
Furthermore, 
$a \varphi_i H = \varphi_i H$ if and only if $a \text{ fixes } \sqrt{-\varphi_i(d)}$,
since $K_0^c \cdot \varphi_i(K) = K_0^c( \sqrt{-\varphi_i(d)} )$.
Hence, $r_\Phi |_C = r$.

	For any $\tau_1, \tau_2 \in G$, 
	\begin{eqnarray*}
		r_\Phi(\tau_1 \tau_2)(\varphi_i H_0) = 0
			&\Longleftrightarrow& (\tau_1\tau_2)^{-1}\varphi_i \in \Phi  \\
			&\Longleftrightarrow& r_\Phi(\tau_2)(\tau_1^{-1} \varphi_i H_0) = \casea  \\  
			&\Longleftrightarrow& r_\Phi(\tau_2)(\tau_1^{-1} \varphi_i H_0) = \caseb  \\  
			&\Longleftrightarrow& r_\Phi(\tau_1)(\varphi_i H_0) + r_\Phi(\tau_2)(\tau_1^{-1} \varphi_i H_0) = 0 \\
			&\Longleftrightarrow& (r_\Phi(\tau_1) + \tau_1 \cdot r_\Phi(\tau_2))(\varphi_i H_0) = 0.
	\end{eqnarray*}
	So $r_\Phi$ is a $1$-cocycle.
	Next, we shall show the map $\Phi \mapsto r_\Phi$ is onto $[r_\Phi]$;
	using the CM-type $\Phi$ and $f \in \IndZtwo$,
	any CM-types $\Phi^\prime$ are represented by   
	$\{ \iota^{f(\varphi_1 H_0)}\varphi_1, \cdots, \iota^{f(\varphi_N H_0)}\varphi_N \}$.
	Then, $r_{\Phi^\prime}(\tau) = r_\Phi(\tau) + \tau \cdot f - f$, 
	which implies that
	\begin{eqnarray*}
		r_{\Phi^\prime}(\tau) = r_\Phi(\tau) \text{ for all } \tau \in G \Longleftrightarrow f \text{ equals } 0 \text{ or } 1 \text{ constantly.}
	\end{eqnarray*}
	\noindent
	Hence, the last statement also holds.
\end{proof}

Since $r_\Phi |_C = r$ and $G/C = G_0$, $r_\Phi$ induces a $1$-cocycle of $\CG{ 1 }{ G_0 }{ \IndZtwo/r(C) }$.
There is the exact sequence of cohomology groups:
\[
\begin{CD}
	\CG{1}{G_0}{ \IndZtwo/r(C) } @>>> \CG{2}{ G_0 }{ r(C) } @>>> \CG{2}{G_0}{\IndZtwo}.
\end{CD}
\]

\begin{lem}\label{lem:extension C by G0}
	Let  $[s]$ be the image of $[r_\Phi]$ by $\CG{1}{G_0}{ \IndZtwo/r(C) } \longrightarrow  \CG{2}{ G_0 }{ r(C) }$.
	Then, $[s] \in \CG{2}{G_0}{ r(C) }$ corresponds to the exact sequence: $1 \longrightarrow C \longrightarrow G \longrightarrow G_0 \longrightarrow 1$.
\end{lem}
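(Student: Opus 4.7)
The plan is to compute a representative of $[s]$ explicitly via the standard zig-zag recipe for the connecting homomorphism $\delta: \CG{1}{G_0}{\IndZtwo/r(C)} \to \CG{2}{G_0}{r(C)}$, and then to identify it with the $2$-cocycle of the given extension obtained from a set-theoretic section. First I would check that $r_\Phi$ descends modulo $r(C)$: since $r_\Phi|_C = r$ takes values in $r(C)$ and $C$ acts trivially on $\IndZtwo$, the $1$-cocycle identity forces $\bar{r}_\Phi(ca) = \bar{r}_\Phi(ac) = \bar{r}_\Phi(a)$ for every $c \in C$ and $a \in G$. Hence $\bar{r}_\Phi$ factors through a well-defined $1$-cocycle $\bar{r}: G_0 \to \IndZtwo/r(C)$, and this is precisely the class whose image under $\delta$ we are calling $[s]$.

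Next I would pick any set-theoretic section $u: G_0 \to G$ of the quotient $G \to G_0$ and put $c(g,h) := u(g)\, u(h)\, u(gh)^{-1} \in C$. By the classical correspondence between $\CG{2}{G_0}{C}$ and equivalence classes of extensions, $[c]$ is the class corresponding to $1 \to C \to G \to G_0 \to 1$. Using $r_\Phi \circ u$ as a lift of $\bar{r}$ to $\IndZtwo$, the standard recipe for $\delta$ produces the representative
\[
s(g, h) \;=\; r_\Phi(u(g)) \;+\; g \cdot r_\Phi(u(h)) \;-\; r_\Phi(u(gh)).
\]
Applying the $1$-cocycle identity of $r_\Phi$ on $G$ to $u(g) u(h)$, substituting $u(g) u(h) = c(g,h)\, u(gh)$, and invoking $r_\Phi|_C = r$ together with the triviality of the $C$-action on $\IndZtwo$, the right-hand side collapses to $r(c(g, h))$.

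This gives $[s] = [r \circ c]$ in $\CG{2}{G_0}{r(C)}$, which is exactly the image of the extension class $[c] \in \CG{2}{G_0}{C}$ under the $G_0$-equivariant isomorphism $r: C \xrightarrow{\sim} r(C)$, so $[s]$ corresponds to the given extension as claimed. I expect the only real bookkeeping obstacle to be keeping the sign and action conventions of $\delta$ compatible with those implicit in the extension cocycle $c(g,h)$; once the $1$-cocycle identity is applied, however, the verification is a single rewriting step, and the fact that $C$ acts trivially on $\IndZtwo$ ensures there is no discrepancy between ``acting by $g$'' and ``acting by $u(g)$'' in the displayed formula.
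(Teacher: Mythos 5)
Your proposal is correct and follows essentially the same route as the paper: both compute the connecting homomorphism on a lift $r_\Phi\circ u$ of the induced class, apply the $1$-cocycle identity of $r_\Phi$ together with the triviality of the $C$-action, and identify the resulting $2$-cocycle with $r$ applied to the factor set $u(g)u(h)u(gh)^{-1}$ of the extension. The only difference is that you spell out the well-definedness of the induced $1$-cocycle on $G_0$ (which also uses that $r(C)$ is a $G$-submodule, via Lemma \ref{lem:embedding r}), a point the paper simply asserts before the lemma.
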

\begin{proof}
	For $\sigma \in G_0$, fix a pre-image of $G \longrightarrow G_0$, and denote it by $\hat{\sigma}$.
	Then, $[s]$ in $\CG{2}{G_0}{r(C)}$ is given by
	\begin{eqnarray*}
		x(\sigma, \tau) = r_\Phi(\hat{\sigma}) - r_\Phi(\widehat{\sigma\tau}) + \hat{\sigma} \cdot r_\Phi(\hat{\tau}).
	\end{eqnarray*}
	Let $a := \hat{\sigma} \hat{\tau} \widehat{\sigma \tau}^{-1}$, then $a \in C$, and we have
	\begin{eqnarray*}
		x(\sigma, \tau) &=& r_\Phi(\hat{\sigma}) - r_\Phi(a^{-1} \hat{\sigma} \hat{\tau}) + \hat{\sigma} \cdot r_\Phi(\hat{\tau}) \nonumber \\
			& = & r_\Phi(\hat{\sigma}) - (-r_\Phi(a) + r_\Phi(\hat{\sigma}) + \hat{\sigma} \cdot r_\Phi(\hat{\tau})) + \hat{\sigma} \cdot r_\Phi(\hat{\tau}) \nonumber \\
			& = & r(a) = r(\hat{\sigma} \hat{\tau} \widehat{\sigma \tau}^{-1}).
	\end{eqnarray*}
	This means $x(\sigma, \tau)$ coincides with the exact sequence.
\end{proof}
By Lemma \ref{lem:extension C by G0}, $[s]$ belongs to the kernel of $H^2(G_0, r(C)) \longrightarrow H^2(G_0, \IndZtwo)$ in the exact sequence.
Therefore, there is a commutative diagram of two exact sequences:
\[
\begin{CD}
	1 @>>> C @>>> G @>>> G_0 @>>> 1 \text{ (exact)} \\
	@.	@VrVV	@VVV	@|	@.	\\
	1 @>>> \IndZtwo @>>> \IndZtwo \rtimes G_0 @>>> G_0 @>>> 1 \text{ (exact)}
\end{CD}
\]

The group operation of the semi-direct group $\IndZtwo \rtimes G_0$ is defined by 
\begin{eqnarray}
	(f, \sigma)(f^\prime, \sigma^\prime) = (f + \sigma \cdot f^\prime, \sigma\sigma^\prime).
\end{eqnarray}
The projection on $\IndZtwo$ of the middle vertical homomorphism equals $r_\Phi$ for a CM-type $(K, \Phi)$.
The corollary follows from the discussion above.

\begin{cor}\label{cor:embedding of G}
	Let $(K, \Phi)$ be a CM-type of $K$.
	Let $\rho$ be the canonical epimorphism $G \twoheadrightarrow G_0$.
	Then, the map $\rho_\Phi : G \longrightarrow \IndZtwo \rtimes G_0$ given by $\tau \mapsto (r_\Phi(\tau), \rho(\tau))$ is an injective homomorphism.
\end{cor}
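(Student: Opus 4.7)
The plan is to check the two required properties directly, using machinery already established earlier in the section: the $1$-cocycle identity for $r_\Phi$ (Proposition \ref{prop:1-cocycle r}) and the injectivity of $r$ on $C$ (Lemma \ref{lem:embedding r}).

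First I would verify that $\rho_\Phi$ is a group homomorphism. Expanding both sides using the semi-direct product law given in the excerpt, one has
\[
\rho_\Phi(\tau_1)\rho_\Phi(\tau_2) = \bigl(r_\Phi(\tau_1) + \rho(\tau_1)\cdot r_\Phi(\tau_2),\ \rho(\tau_1)\rho(\tau_2)\bigr),
\]
while
\[
\rho_\Phi(\tau_1\tau_2) = \bigl(r_\Phi(\tau_1\tau_2),\ \rho(\tau_1\tau_2)\bigr).
\]
Equality in the second coordinate is immediate because $\rho$ is a homomorphism. For the first coordinate, the $G$-action on $\IndZtwo$ factors through $G_0 = G/C$ (this was observed right after the definition of $\IndZtwo$ in the excerpt), so $\rho(\tau_1)\cdot r_\Phi(\tau_2) = \tau_1 \cdot r_\Phi(\tau_2)$, and then the required identity is exactly the $1$-cocycle condition $r_\Phi(\tau_1\tau_2) = r_\Phi(\tau_1) + \tau_1\cdot r_\Phi(\tau_2)$ proved in Proposition \ref{prop:1-cocycle r}.

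Next I would check injectivity. Suppose $\rho_\Phi(\tau) = (0,1)$. From the second coordinate, $\rho(\tau) = 1$, so $\tau \in \Kernel{\rho} = C$. From the first coordinate, $r_\Phi(\tau) = 0$; but $r_\Phi|_C = r$ by Proposition \ref{prop:1-cocycle r}, and $r$ is injective by Lemma \ref{lem:embedding r}, forcing $\tau = 1$.

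There is no real obstacle here; the statement is essentially the assertion that the cocycle $r_\Phi$ together with the quotient map $\rho$ assembles into the middle vertical arrow of the commutative diagram of extensions displayed just above the corollary, and the only nontrivial input — the cocycle identity and the injectivity of $r$ on $C$ — has already been recorded. The slightly subtle point to articulate carefully is that the action appearing in the semi-direct product is the $G_0$-action on $\IndZtwo$, which coincides with the $G$-action via $\rho$; once this is stated explicitly, both the homomorphism property and injectivity follow immediately.
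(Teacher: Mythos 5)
Your proposal is correct: the homomorphism property is exactly the $1$-cocycle identity of Proposition \ref{prop:1-cocycle r} combined with the fact that the $G$-action on $\IndZtwo$ factors through $\rho$ (since $C$ acts trivially), and injectivity follows because an element of the kernel lies in $C=\Kernel{\rho}$ and is killed by $r_\Phi|_C=r$, which is injective by Lemma \ref{lem:embedding r}. This is, however, not quite the route the paper takes: there the corollary is extracted from the cohomological discussion, namely Lemma \ref{lem:extension C by G0}, which identifies the $2$-cocycle $x(\sigma,\tau)=r(\hat\sigma\hat\tau\widehat{\sigma\tau}^{-1})$ obtained from $[r_\Phi]$ with the class of the extension $1\to C\to G\to G_0\to 1$; since that class dies in $\CG{2}{G_0}{\IndZtwo}$, one gets the commutative diagram of extensions displayed before the corollary, whose middle vertical arrow is $\rho_\Phi$. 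Your argument buys a short, self-contained verification that in particular makes the injectivity explicit (in the paper it is implicit in the diagram, resting on the same two facts you invoke: $r$ injective on $C$ and the identity on $G_0$ on the right). The paper's route buys the extra structural information that the extension class of $G$ over $G_0$ is the image of $[r_\Phi]$ under the connecting map, which is the conceptual point of Subsection \ref{subsection:On the structure of the Galois groups of CM-fields and their reflex fields}; the embedding itself is a byproduct. Both proofs ultimately rest on Proposition \ref{prop:1-cocycle r} and Lemma \ref{lem:embedding r}, so yours is a legitimate and more elementary derivation of the same statement.
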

   
By the definition of $r_\Phi$, we have for any $\tau \in G$,
\begin{eqnarray}
	\tau \in \hreflex(\Phi) \Longleftrightarrow r_\Phi(\tau) \text{ equals } 0 \text{ constantly.}
\end{eqnarray}
Therefore, using the embedding $r_\Phi$ and $\rho$ defined in Corollary \ref{cor:embedding of G},
the subgroups of $G$ can be restated as follows; 
\begin{eqnarray}
	C &=& \left\{ \tau \in G : \rho(\tau) = id \right\}, \label{eq:C} \\ 
	\hreflex(\Phi) &=& \left\{ \tau \in G : r_\Phi(\tau) = 0 \text{ for any } \tau \right\}. \label{eq:hreflex}
\end{eqnarray}
In particular, by the injectivity of $r$ on $C$, $C \cap \hreflex(\Phi) = \{ id \}$ for any $(K, \Phi)$.
On the other hand, $C \subset H$.  Hence, $\hreflex(\Phi)$ can equal $H$ only when $C = \{id, \iota\}$.


%
%
\subsection{The conjugacy among CM-types}
\label{The conjugacy among CM-types over RationalField}
It is said that two CM-types $(K, \Phi)$ and $(K, \Phi^\prime)$ are conjugate if and only if $\Phi^\prime = \tau \Phi$ for some $\tau \in G$.
Corresponding to the action of $G$ on the CM-types of $K$, 
a new action of $G$ on $\IndZtwo$ is introduced in this section.

Fix a CM-type $\Phi_0 := \{ \varphi_1, \cdots, \varphi_N \}$ of $K$. Then, for $f \in \IndZtwo$, a CM-type $\Phi_f$ is given by
\begin{eqnarray}\label{eq:definition of Phi_f}
	\Phi_f := \left\{ \iota^{f(\varphi_1 H_0)} \varphi_1, \cdots, \iota^{f(\varphi_N H_0)} \varphi_N \right\}.
\end{eqnarray}
The set of the CM-types of $K$ has a one-to-one correspondence with $\IndZtwo$ by $f \mapsto \Phi_f$.

Regarding $G$ as a subgroup of $\IndZtwo \rtimes G_0$ by the embedding $\rho_{\Phi_0}$ defined in Corollary \ref{cor:embedding of G},
let $G$ act on $\IndZtwo \rtimes G_0$ by multiplication from the left-hand side.
This yields a new action of $G$ on $\IndZtwo$ given by $\tau * f := r_{\Phi_0}(\tau) + \tau \cdot f$.
This action depends on $\Phi_0$.  We shall show the lemma:

\begin{lem}\label{lem:conjugacy class of CM-types}
	The identification between all the CM-types and $\IndZtwo$ given by $f \mapsto \Phi_f$ 
	coincides with their $G$-structure, \IE $\tau \Phi_f = \Phi_{\tau * f}$ for $\tau \in G$.
	Hence, 
	$\hreflex(\Phi_f)$ coincides with the stabilizer of $f$, \IE $\tau * f = f \Longleftrightarrow \tau \in \hreflex(\Phi_f)$. 
	In addition, 
	two CM-types $(K, \Phi_f)$, $(K, \Phi_{f^\prime})$ are conjugate
	if and only if there exists $\tau \in G$ such that $f^\prime = \tau * f$.
\end{lem}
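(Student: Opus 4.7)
The plan is to first establish the identity $\tau \Phi_f = \Phi_{\tau * f}$; the stabilizer characterization of $\hreflex(\Phi_f)$ and the conjugacy statement then follow formally from it together with the fact that $f \mapsto \Phi_f$ is a bijection.

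For the first identity, I would compute both sides as subsets of $G/H$. Since $\iota$ is central,
\begin{eqnarray*}
\tau \Phi_f = \{\iota^{f(\varphi_i H_0)} \tau \varphi_i H : i = 1, \ldots, N\}.
\end{eqnarray*}
Let $\pi$ denote the permutation of $\{1, \ldots, N\}$ induced by left multiplication by $\tau$ on $G/H_0$, so that $\tau \varphi_i H_0 = \varphi_{\pi(i)} H_0$. At the finer level of $G/H$, the coset $\tau \varphi_i H$ can only differ from $\varphi_{\pi(i)} H$ by a factor of $\iota$; writing $\tau \varphi_i H = \iota^{\epsilon_i} \varphi_{\pi(i)} H$ with $\epsilon_i \in \{0, 1\}$, the key point is that $\epsilon_{\pi^{-1}(j)} = r_{\Phi_0}(\tau)(\varphi_j H_0)$. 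To see this, invert $\tau \varphi_{\pi^{-1}(j)} H = \iota^{\epsilon_{\pi^{-1}(j)}} \varphi_j H$ to obtain $\tau^{-1} \varphi_j H = \iota^{\epsilon_{\pi^{-1}(j)}} \varphi_{\pi^{-1}(j)} H$, and observe that $\varphi_{\pi^{-1}(j)} \in \Phi_0$, so Definition \ref{lem:1-cocycle r} gives precisely the claimed equality. Reindexing the displayed set by $j = \pi(i)$ and using $f(\varphi_{\pi^{-1}(j)} H_0) = (\tau \cdot f)(\varphi_j H_0)$ yields $\tau \Phi_f = \Phi_{\tau * f}$.

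Given this identity, $\tau \in \hreflex(\Phi_f)$ is equivalent to $\tau \Phi_f = \Phi_f$ viewed as subsets of $G/H$, hence to $\Phi_{\tau * f} = \Phi_f$; since $\iota \notin H$, the map $f \mapsto \Phi_f$ is injective, so this is in turn equivalent to $\tau * f = f$. The conjugacy equivalence is then immediate from the identity and the bijectivity of $f \mapsto \Phi_f$. The main obstacle is the bookkeeping in the first identity: one must carefully separate the $G/H_0$-level action of $\tau$ (the permutation $\pi$) from the $G/H$-level $\iota$-twist, and recognize that the latter is precisely what the $1$-cocycle $r_{\Phi_0}$ was designed to record.
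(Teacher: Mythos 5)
Your proof is correct and takes essentially the same approach as the paper: a direct coset-level verification that $\tau \Phi_f = \Phi_{\tau * f}$, from which the stabilizer description of $\hreflex(\Phi_f)$ and the conjugacy criterion follow formally via the injectivity of $f \mapsto \Phi_f$. The only cosmetic difference is that you re-derive the $\iota$-twist bookkeeping directly from $r_{\Phi_0}$ via the permutation $\pi$, whereas the paper expresses $\tau\Phi_f$ through $r_{\Phi_f}(\tau)+f$ and then invokes the coboundary relation $r_{\Phi_f}(\tau) = r_{\Phi_0}(\tau) + \tau\cdot f - f$ from Proposition \ref{prop:1-cocycle r}.
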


\begin{proof}
	If we show $\tau \Phi_f = \Phi_{\tau * f}$ for any $\tau \in G$, 
	it is easily seen that the remaining statements hold. 
	For $\tau \in G$ and $\varphi \in \Phi_0$, we have 
	\begin{eqnarray*}
		r_{\Phi_f}(\tau)(\varphi H_0) = 0 \Longleftrightarrow \iota^{f(\varphi H_0)} \varphi \in \tau \Phi_f.
	\end{eqnarray*}
	Hence, 
	\begin{eqnarray*}
		\tau \Phi_f = \left\{ \iota^{(r_{\Phi_f}(\tau)+f)(\varphi_1 H_0)} \varphi_1, \cdots, \iota^{(r_{\Phi_f}(\tau)+f)(\varphi_N H_N)} \varphi_N \right\}.
	\end{eqnarray*}
	Since $r_{\Phi_f}(\tau) = r_{\Phi_0}(\tau) + \tau \cdot f - f$,
	\begin{eqnarray}\label{eq:Phi_f}
		r_{\Phi_f}(\tau) + f = r_{\Phi_0}(\tau) + \tau \cdot f = \tau * f.
	\end{eqnarray}
	Therefore, $\tau \Phi_f = \Phi_{\tau * f}$.  We proved the lemma.
\end{proof} 
The set of the CM-types of $K$ is divided into the orbits by the action of $G$.
Each orbit corresponds to a conjugacy class of the CM-types of $K$.
The fixed subgroup of $\Phi$ is $\hreflex(\Phi)$. Therefore,
the degree of the reflex field $\kreflex(\Phi)$ equals the cardinality of 
the orbit. 
From this, the following lemma follows which is also described in \cite{Shimuraiv};
\begin{lem} \label{lem:the sum of the degrees}
Let $\Lambda$ be a system of representatives for the conjugacy classes of the CM-types of $K$.
Then, the sum of the degrees of the reflex fields $\{ \kreflex(\Phi) \}_{\Phi \in \Lambda}$ equals $2^N$.
\end{lem}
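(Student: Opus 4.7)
The plan is to combine the bijection between CM-types and $\IndZtwo$ established in Lemma \ref{lem:conjugacy class of CM-types} with the orbit-stabilizer theorem, and to identify the degree of each reflex field with the size of the corresponding orbit under the twisted $G$-action $\tau * f = r_{\Phi_0}(\tau) + \tau \cdot f$.

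First I would observe that the total number of CM-types of $K$ equals the cardinality of $\IndZtwo = {\rm Map}(G/H_0, \IntegerRing_2)$, which is $2^{[G:H_0]} = 2^N$, since $N = [K_0 : \RationalField] = |G/H_0|$. Fixing a base CM-type $\Phi_0$, the bijection $f \mapsto \Phi_f$ of (\ref{eq:definition of Phi_f}) gives a $G$-equivariant identification with respect to the twisted action $*$, by Lemma \ref{lem:conjugacy class of CM-types}. In particular, the $G$-orbits on $\IndZtwo$ under $*$ are exactly the conjugacy classes of CM-types of $K$, so $\Lambda$ corresponds to a set of orbit representatives.

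Next I would apply the orbit-stabilizer theorem. By Lemma \ref{lem:conjugacy class of CM-types}, the stabilizer of $f \in \IndZtwo$ under the $*$-action equals $\hreflex(\Phi_f)$, hence the orbit of $\Phi_f$ has size
\begin{eqnarray*}
    |G \cdot \Phi_f| \;=\; [G : \hreflex(\Phi_f)] \;=\; [\kreflex(\Phi_f) : \RationalField],
\end{eqnarray*}
where the last equality is Galois theory applied to the fixed field of $\hreflex(\Phi_f)$ in $K^c$. Summing this identity over $\Phi \in \Lambda$ and using that the orbits partition $\IndZtwo$ gives
\begin{eqnarray*}
    \sum_{\Phi \in \Lambda} [\kreflex(\Phi) : \RationalField] \;=\; \sum_{\Phi \in \Lambda} |G \cdot \Phi| \;=\; |\IndZtwo| \;=\; 2^N,
\end{eqnarray*}
as claimed.

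I do not expect any substantial obstacle: once the $G$-equivariant bijection $f \mapsto \Phi_f$ and the identification of stabilizers with $\hreflex(\Phi_f)$ are in hand, the lemma reduces to counting the points of $\IndZtwo$ by orbits. The only point worth being careful about is confirming that $|G/H_0| = N$ (so that $|\IndZtwo| = 2^N$), which is immediate from the definitions of $N$ and $H_0 = \Galgp{K^c}{K_0}$.
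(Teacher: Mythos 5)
Your proof is correct and follows essentially the same route as the paper: the paper also identifies the $2^N$ CM-types with $\IndZtwo$ via $f \mapsto \Phi_f$, notes that the stabilizer of $\Phi$ is $\hreflex(\Phi)$ so each orbit has cardinality $[G : \hreflex(\Phi)] = [\kreflex(\Phi) : \RationalField]$, and sums over the orbits. No gaps to report.
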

It is used in the proof of Theorem \ref{thm:main}.

%
%
\subsection{On a combinatorial  property of half norm maps}
\label{subsection:half norm maps}
For a CM-type $(K, \Phi)$, a half norm map $K^\times \longrightarrow \kreflex(\Phi)^\times$ is defined by $a \mapsto \prod_{\varphi \in \Phi} \varphi(a)$.
In general, for a given $G$-module $M$,
a half norm map $N_\Phi : M^H \longrightarrow M^{\hreflex(\Phi)}$ is defined by $a \mapsto \sum_{\varphi \in \Phi} \varphi(a)$,
where $M^H$, $M^{\hreflex(\Phi)}$ are the subsets of $M$ consisting of the fixed elements by $H$, $\hreflex(\Phi)$ respectively.
Then, $N_{\Phi}\Parths{a + \iota a}$ equals the norm map $N_{G/H} : a \mapsto \sum_{\sigma \in G/H} \sigma(a)$.

The following proposition shows a characteristic property of half norm maps.

\begin{prop}\label{prop:multiplication by 2^N-1}
	Let $M$ be a $G$-module on which $\iota$ acts as $-1$.  Denote the dual CM-type of $(K, \Phi)$ by $(\kreflex(\Phi), \Phi^{*})$.
	Then, two maps are defined using the half norm maps:
	\begin{eqnarray}
		(N_\Phi)_{\Phi \in \Lambda} : M^H &\longrightarrow& \displaystyle \bigoplus_{\Phi \in \Lambda} M^{\hreflex(\Phi)} \nonumber \\
		a &\mapsto& \left( \sum_{\varphi \in \Phi} \varphi(a) \right)_{\Phi \in \Lambda},
	\end{eqnarray}
	\begin{eqnarray}
		\displaystyle \sum_{\Phi \in \Lambda} N_{\Phi^{*}} : \displaystyle \bigoplus_{\Phi \in \Lambda} M^{\hreflex(\Phi)} &\longrightarrow& M^H  \nonumber \\
		(b_\Phi)_{\varphi \in \Phi} &\mapsto& \displaystyle \sum_{\Phi \in \Lambda} \displaystyle \sum_{\psi \in \Phi^{*}} \psi(b_\Phi),
	\end{eqnarray}
	where $\Lambda$ ranges all the conjugacy classes of the CM-types of $K$.
	Then, the composition of the following maps equals the multiplication by $2^{N-1}$ on $M^H$, where $2N$ is the degree of $K$ over $\RationalField$.
	\[
	\begin{CD}
		M^H
			@>(N_\Phi)_{\Phi \in \Lambda}>>
		\displaystyle \bigoplus_{\Phi \in \Lambda} M^{\hreflex(\Phi)} 
			@>\sum_{\Phi \in \Lambda} N_{\Phi^{*}}>>
		M^H.
	\end{CD}
	\]
	\vspace{-5mm}
\end{prop}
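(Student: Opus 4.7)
The plan is to rewrite $\sum_{\Phi \in \Lambda} N_{\Phi^{*}} \circ N_\Phi$ as a single sum of half-norm maps over CM-types of $K$ containing the identity embedding, and then to evaluate that sum by elementary counting.

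First, for $a \in M^H$ I would expand directly:
\[ N_{\Phi^{*}}N_\Phi(a) = \sum_{j=1}^{M} \psi_j^{-1}\Bigl(\sum_{\varphi \in \Phi}\varphi(a)\Bigr) = \sum_{j=1}^{M}\sum_{\varphi \in \Phi}(\psi_j^{-1}\varphi)(a) = \sum_{j=1}^{M} N_{\psi_j^{-1}\Phi}(a), \]
where the last equality uses Lemma \ref{lem:conjugacy class of CM-types} to recognise $\psi_j^{-1}\Phi$ as a CM-type of $K$. Summing over $\Phi \in \Lambda$ produces $\sum_{\Phi'' \in \mathcal{S}} N_{\Phi''}(a)$ with the family $\mathcal{S} := \bigsqcup_{\Phi \in \Lambda}\{\psi_j^{-1}\Phi\}_{j=1}^{M}$.

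Next, I would identify $\mathcal{S}$ with $\{\Phi'' : id \in \Phi''\}$, the set of CM-types of $K$ containing the identity embedding. Since $\psi_j \in S_\Phi = \bigcup_{\varphi \in \Phi}\varphi H$, the coset $\psi_j H$ belongs to $\Phi$, so $id = \psi_j^{-1}\psi_j \in \psi_j^{-1}\Phi$. Conversely, any CM-type $\tau\Phi$ in the orbit of $\Phi$ with $id \in \tau\Phi$ satisfies $\tau^{-1} \in S_\Phi$, hence $\tau^{-1} \in \hreflex(\Phi)\psi_j$ for some $j$, giving $\tau\Phi = \psi_j^{-1}\Phi$. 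Distinct $j$ within one orbit produce distinct translates (since distinct right $\hreflex(\Phi)$-cosets act differently on $\Phi$), and translates from distinct orbits are disjoint. By Lemma \ref{lem:the sum of the degrees} the cardinality $\sum_{\Phi \in \Lambda} M = 2^{N-1}$ of $\mathcal{S}$ matches the direct count $2^{N-1}$ of CM-types containing $id$, so the inclusion $\mathcal{S} \subset \{\Phi'' : id \in \Phi''\}$ is an equality.

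Finally, I would compute $T(a) = \sum_{\Phi'' : id \in \Phi''} \sum_{\varphi \in \Phi''}\varphi(a)$ by swapping the order of summation. For each embedding $\varphi$ the number of CM-types $\Phi''$ with $\{id, \varphi\} \subset \Phi''$ equals $2^{N-1}$ if $\varphi = id$, zero if $\varphi = \iota$, and $2^{N-2}$ otherwise. Pairing each remaining $\varphi$ with $\iota\varphi$ and using that $\iota$ acts as $-1$ on $M$, the contributions $2^{N-2}\varphi(a)$ and $2^{N-2}\iota\varphi(a) = -2^{N-2}\varphi(a)$ cancel, leaving $T(a) = 2^{N-1}a$.

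The main obstacle is the reformulation in the first step: correctly pushing $\psi_j^{-1}$ inside the inner sum and recognising $\psi_j^{-1}\Phi$ as a CM-type, so that the two-stage composition collapses to a single sum of half-norm maps. Once $\mathcal{S}$ is identified with the CM-types containing $id$, the remainder is a transparent combinatorial cancellation driven entirely by the hypothesis $\iota = -1$ on $M$.
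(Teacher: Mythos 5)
Your proof is correct. Each step checks out: the expansion $N_{\Phi^{*}}N_\Phi(a)=\sum_{j}N_{\psi_j^{-1}\Phi}(a)$ is valid because $N_\Phi(a)$ is $\hreflex(\Phi)$-fixed and $\hreflex(\Phi)$ stabilizes $\Phi$, so applying the coset $\psi_j^{-1}\hreflex(\Phi)$ is well-defined and distributes over the inner sum; the identification of $\bigsqcup_{\Phi\in\Lambda}\{\psi_j^{-1}\Phi\}_j$ with the set of CM-types containing the identity embedding is established both by your explicit bijection (injectivity from the disjointness of the right cosets $\hreflex(\Phi)\psi_j$, surjectivity from $\tau^{-1}\in S_\Phi$) and by the count $\sum_\Phi M_\Phi=\frac12\sum_\Phi n_\Phi=2^{N-1}$ via Lemma \ref{lem:the sum of the degrees}; and the final cancellation (multiplicities $2^{N-1}$, $0$, $2^{N-2}$, with the $2^{N-2}$-terms cancelling in pairs $\{\varphi,\iota\varphi\}$ because $\iota=-1$ on $M$) is exactly right.

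Your route is genuinely different from the paper's. The paper omits a direct proof of this proposition and instead derives it as a special case of Proposition \ref{prop:multiplication by 2^N-1(2)}, whose proof (Lemmas \ref{lem:eq1} and \ref{lem:eq2}) runs through the embedding $G\hookrightarrow\IndZtwo\rtimes G_0$, the twisted action $\tau * f$, and character-sum computations over $\IndZtwo/\langle\mathbf{1}\rangle$ using the decompositions (\ref{eq:decomposition1}) and (\ref{eq:decomposition2}). Your argument stays entirely at the level of CM-types as sets of embeddings and reduces everything to counting CM-types containing a prescribed pair of embeddings; it is more elementary and self-contained, at the cost of not generalizing to the full statement of Proposition \ref{prop:multiplication by 2^N-1(2)}, where the index sets $\Jodd$ and the sign twists $\iota^{\sum_{\varphi H_0\in I}f(\varphi H_0)}$ genuinely require the $\IndZtwo$ formalism. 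One cosmetic remark: the citation of Lemma \ref{lem:conjugacy class of CM-types} in your first step is unnecessary, since $\tau\Phi$ being a CM-type for any $\tau\in G$ is immediate from the definition; and having proved the bijection directly, the cardinality comparison via Lemma \ref{lem:the sum of the degrees} is a redundant (though reassuring) second verification.
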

This lemma is generalized to Proposition \ref{prop:multiplication by 2^N-1(2)} in Section \ref{A Pfister form and reflex fields}.
Hence, we omit the proof here.

Let $\Idele{L}$  be the idele group of a number field $L$.
For a natural number $b$, define an open subgroup $U_L((b)) \subset \Idele{L}$ by
\begin{eqnarray}
	U_L((b)) := (L_\infty)_+^\times 
									\times \prod_{ {\mathfrak p} \mid (b)} (1+b{\mathcal O}_{\mathfrak p})
									\times \prod_{{\mathfrak p} \nmid (b)} {\mathcal O}_{\mathfrak p}^\times,
\end{eqnarray}
\noindent
where ${\mathcal O}_{\mathfrak p}$ is the ring of integers of the local field $L_{\mathfrak p}$,
and $(L_\infty)_+^\times$ is the connected open subgroup of $1$ in $(L \otimes_\RationalField \RealField)^\times$.

From Proposition \ref{prop:multiplication by 2^N-1}, the corollary follows immediately. 
It will be used in the proof of the theorem \ref{thm:main}.

\begin{cor} \label{cor:2^N-1 on idele}
	The composition of the following two maps equals the multiplication by $2^{N-1}$ on $\Idele{K} / U_K((b)) \Idele{K_0}$.
	\[
	\begin{CD}
		\Idele{K} / \Idele{K_0} U_K((b)) 
			@>(N_\Phi)_{\Phi \in \Lambda}>>
		\displaystyle \bigoplus_{\Phi \in \Lambda} \Idele{\kreflex(\Phi)} / \Idele{\kreflex_0(\Phi)} U_{\kreflex(\Phi)}((b)) 
			@>\sum_{\Phi \in \Lambda} N_{\Phi^{*}}>>
		\Idele{K} / \Idele{K_0} U_K((b)).
	\end{CD}
	\]
	In particular, the elements that belong to the kernel of $(N_\Phi)_{\Phi \in \Lambda}$, has an order dividing $2^{N-1}$.
\end{cor}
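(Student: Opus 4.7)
The plan is to derive the corollary from Proposition \ref{prop:multiplication by 2^N-1} by realizing the idele quotients as the $H$- and $\hreflex(\Phi)$-fixed parts of a suitable $G$-module on which $\iota$ acts as $-1$. Concretely, set
\[ M := \Idele{K^c} / (\Idele{K_0^c} \cdot U_{K^c}((b))), \]
with $G$ acting via its natural action on $\Idele{K^c}$. Both $\Idele{K_0^c}$ and $U_{K^c}((b))$ are $G$-stable subgroups of $\Idele{K^c}$ (the former because $K_0^c/\RationalField$ is Galois, the latter because the congruence condition modulo $b$ and the positivity condition at the archimedean places are preserved by every field automorphism), so $M$ is a well-defined $G$-module. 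The identity $a \cdot \iota(a) = N_{K^c/K_0^c}(a) \in \Idele{K_0^c}$ shows that $\iota$ acts by inversion on $M$, which is the multiplicative analogue of the hypothesis ``$\iota$ acts as $-1$'' in Proposition \ref{prop:multiplication by 2^N-1}.

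The second step is to match $M^H$ and $M^{\hreflex(\Phi)}$ with the quotients appearing in the corollary. For any intermediate field $L \subset K^c$ with maximal totally real subfield $L_0$, Galois descent gives $\Idele{K^c}^{\Galgp{K^c}{L}} = \Idele{L}$, and one then verifies the refined identity $(\Idele{K_0^c} \cdot U_{K^c}((b)))^{\Galgp{K^c}{L}} = \Idele{L_0} \cdot U_L((b))$. Applying this with $L = K$ and $L = \kreflex(\Phi)$ identifies $M^H$ and $M^{\hreflex(\Phi)}$ with the quotients of the corollary, and the half norm maps $\sum_{\varphi \in \Phi} \varphi$ and $\sum_{\psi \in \Phi^{*}} \psi$ of Proposition \ref{prop:multiplication by 2^N-1} become, in multiplicative notation, exactly the half norm maps of the corollary.

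Once these identifications are in place, Proposition \ref{prop:multiplication by 2^N-1} directly yields that the composition equals multiplication by $2^{N-1}$ on $\Idele{K}/\Idele{K_0}\,U_K((b))$; the ``in particular'' clause then follows because any $x$ in the kernel of $(N_\Phi)_{\Phi \in \Lambda}$ satisfies $2^{N-1}x = 0$. The main obstacle is the descent identity $(\Idele{K_0^c} \cdot U_{K^c}((b)))^{\Galgp{K^c}{L}} = \Idele{L_0} \cdot U_L((b))$: the inclusion $\supset$ is formal, while $\subset$ requires either a cohomological argument controlling $H^1(\Galgp{K^c}{L}, \Idele{K_0^c} \cap U_{K^c}((b)))$ or, more concretely, a place-by-place verification exploiting the explicit product form of $U_{K^c}((b))$. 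Everything else in the reduction is routine.
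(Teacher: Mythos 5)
Your reduction breaks down at its central claim, namely that $\iota$ acts by inversion on $M:=\Idele{K^c}/\Idele{K_0^c}U_{K^c}((b))$. The identity you invoke is wrong in general: for $a\in\Idele{K^c}$ one has $a\cdot\iota(a)=N_{K^c/F}(a)$ where $F$ is the fixed field of $\langle\iota\rangle$, i.e.\ the maximal totally real subfield of $K^c$, and $F=K_0^c$ only when $C=\Galgp{K^c}{K_0^c}=\{1,\iota\}$. Since $C\cong(\IntegerRing/2\IntegerRing)^m$ is typically larger (already for a non-Galois quartic CM field, where $G$ is dihedral of order $8$ and $m=2$), $a\cdot\iota(a)$ need not lie in $\Idele{K_0^c}U_{K^c}((b))$. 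Concretely, pick a rational prime $p\nmid b$ and a prime $\mathfrak{p}_0$ of $K_0^c$ above it that splits completely in $K^c$; the primes of $K^c$ over $\mathfrak{p}_0$ are permuted simply transitively by $C$, and $\iota$ moves each of them. Taking $a$ with valuation $1$ at one such prime and $0$ at the others, $a\,\iota(a)$ has valuation vector $(1,1,0,\dots,0)$ over $\mathfrak{p}_0$, while every element of $\Idele{K_0^c}U_{K^c}((b))$ has constant valuation at the primes over $\mathfrak{p}_0$. Hence $\iota$ is not inversion on your $M$, and Proposition \ref{prop:multiplication by 2^N-1} does not apply to it. In addition, even granting your descent identity for the subgroup, identifying $M^{H}$ with $\Idele{K}/\Idele{K_0}U_K((b))$ requires commuting $H$-invariants with the quotient, which involves $H^1(H,\Idele{K_0^c}U_{K^c}((b)))$ and is not addressed; you only discuss the fixed points of the subgroup itself.

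The gap is avoidable because no auxiliary $G$-module is needed: the groups appearing in the corollary already satisfy the ``$\iota$ acts as $-1$'' hypothesis multiplicatively, precisely because the relevant extensions have degree $2$. Indeed $x\,\iota(x)=N_{K/K_0}(x)\in\Idele{K_0}$ for $x\in\Idele{K}$, and $y\,\iota(y)=N_{\kreflex(\Phi)/\kreflex_0(\Phi)}(y)\in\Idele{\kreflex_0(\Phi)}$ for $y\in\Idele{\kreflex(\Phi)}$, so $\iota$ acts as inversion on $\Idele{K}/\Idele{K_0}U_K((b))$ and on each $\Idele{\kreflex(\Phi)}/\Idele{\kreflex_0(\Phi)}U_{\kreflex(\Phi)}((b))$. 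The half norm maps descend to these quotients, since $N_\Phi$ carries $\Idele{K_0}$ into $\Idele{\kreflex_0(\Phi)}$ and $U_K((b))$ into $U_{\kreflex(\Phi)}((b))$, and symmetrically for $N_{\Phi^*}$. Then the operator identity underlying Proposition \ref{prop:multiplication by 2^N-1} (the composite equals $2^{N-2}(1-\iota)+2^{N-2}N_{G/H}$ on modules where $\iota$ acts as inversion, and here $N_{G/H}(x)=N_{K/\RationalField}(x)\in\Idele{\RationalField}\subset\Idele{K_0}$) gives multiplication by $2^{N-1}$ on $\Idele{K}/\Idele{K_0}U_K((b))$ directly, which is the ``immediate'' derivation the paper intends; the ``in particular'' clause then follows as you say.
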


\begin{rmk}
	When $K$ is a Galois extension over $\RationalField$, all the reflex fields are contained in $K$.
	Therefore, in this case, the elements that belong to the kernel of $(N_\Phi)_{\Phi \in \Lambda}$ have an order dividing at most $2$.
\end{rmk}

%
%
\section{The Abelian extension by complex multiplication}
\label{Abelian extensions of fields of CM-type}
In this section, we shall prove the theorem on the abelian extension generated by complex multiplication.

First, we give some definitions; for a number field $L$,
let $C_L := \Idele{L} / L^{\times}$, and denote the connected component of the identity in $C_L$ by $D_L$.
By the main theorem of class field theory,
there is the canonical isomorphism $\phi_L : C_L /D_L \stackrel{\cong}{\longrightarrow} \Galgp{\AbelianExt{L}}{L}$.

For a number field $L$ and an integral ideal ${\mathfrak a}$ of $L$, define an open subgroup $U_L({\mathfrak a}) \subset \Idele{L}$ associated with ${\mathfrak a}$ by
\begin{eqnarray} \label{eq:U_K(b)}
	U_L({\mathfrak a}) := (L_\infty)_+^\times 
									\times \prod_{ {\mathfrak p} \mid {\mathfrak a}} (1+{\mathfrak a}{\mathcal O}_{\mathfrak p})
									\times \prod_{{\mathfrak p} \nmid {\mathfrak a}} {\mathcal O}_{\mathfrak p}^\times,
\end{eqnarray}

\noindent
where ${\mathcal O}_{\mathfrak p}$ is the ring of integers of the local field $L_{\mathfrak p}$,
and $(L_\infty)_+^\times$ is the connected open subgroup in $(L \otimes_\RationalField \RealField)^\times$.
Let $L_{\mathfrak a}$ be the ray class field of $L$ modulo ${\mathfrak a}$, then, $\phi_L$ gives an isomorphism $\Idele{L}/L^\times U_L({\mathfrak a}) \longrightarrow \Galgp{L_{\mathfrak a}}{L}$.

Let $A$ be a polarized abelian variety of CM-type $(\kreflex(\Phi),\Phi^*)$.  
For a CM-type $(K, \Phi)$ and the dual CM-type $(\kreflex(\Phi), \Phi^*)$,
we have a half norm map $K^\times \longrightarrow \kreflex(\Phi)^\times$
defined by $a \mapsto \prod_{\varphi \in \Phi} \varphi(a)$.
It also induces a half norm map $N_\Phi : C_K \longrightarrow C_{\kreflex(\Phi)}$.
By the theory of complex multiplication, the abelian extension over $K$ generated by the moduli and ${\mathfrak a}$-torsions of $A$
 corresponds to the kernel of the map induced by $N_\Phi$:
\begin{eqnarray*}
	C_K
			&\longrightarrow&
	\Idele{\kreflex(\Phi)} / (\kreflex(\Phi))^\times U_{\kreflex(\Phi)}({\mathfrak a}),\\
\vspace{-5mm}
	{\mathfrak a} &\mapsto& \prod_{\varphi \in \Phi} \varphi({\mathfrak a}).
\end{eqnarray*}

In the sequel, let $b$ be a natural number, and $\mathcal{M}_{K, b}$ be the subfield of $\AbelianExt{K}$ obtained by adjoining to $K$,
the moduli and the $b$-torsion points of all the polarized abelian varieties of a CM-type with the reflex field contained in $K$.
By the theory of complex multiplication, ${\mathcal M}_{K,b}$ is contained in $K_{(b)}$,
the ray class field of $K$ modulo $b$.
Furthermore, we have the theorem:

\begin{thm}\label{thm:main}
Let $\{ \kreflex(\Phi) \}_{\Phi \in \Lambda}$ be the set of the reflex fields of all the CM-types of $K$,
and $2^{N_0}$ be the maximum $2$-power dividing the degrees of all the reflex fields of $K$.
(Hence, $1 \leq N_0 \leq N$.)

Then, there exists a subgroup $H(b)$ of $\Galgp{K_{(b)}}{{\mathcal M}_{K,b}}$ satisfying the followings:
	\begin{enumerate}[(i)]
		 \item \label{item:1, thm_main} $2^{N-1}\Galgp{K_{(b)}}{{\mathcal M}_{K,b}} \subset H(b) \subset \Galgp{K_{(b)}}{{\mathcal M}_{K,b}}$, \\
\vspace{-5mm}
		 \item \label{item:2, thm_main} $2^{N_0}H(b) \subset \Vermap{ \Galgp{K_{0, (b)}}{K_0 \cdot \RationalField_{(b)} } } \subset H(b)$,
	\end{enumerate}
	where 
	$K_{0, (b)}$ is the ray class field of $K_0$ modulo $b$,
	and 
	$Ver$ is the Verlagerung map $\Galgp{K_{0, (b)}}{K_0} \longrightarrow \Galgp{{K}_{(b)}}{K}$.

In particular,  for any odd prime $p$, the $p$-component of $\Galgp{K_{(b)}}{{\mathcal M}_{K,b}}$ is isomorphic to 
that of $\Vermap{ \Galgp{K_{0, (b)}}{K_0 \cdot \RationalField_{(b)}} }$.
\end{thm}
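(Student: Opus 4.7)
The plan is to pass to idele class groups via class field theory and the main theorem of complex multiplication, and then apply Corollary \ref{cor:2^N-1 on idele} together with a simple norm computation. Write $A := \Idele{K}/K^\times U_K(b) \cong \Galgp{K_{(b)}}{K}$ and $A_0 := \Idele{K_0}/K_0^\times U_{K_0}(b) \cong \Galgp{K_{0,(b)}}{K_0}$, and let $V : A_0 \to A$ be the Verlagerung induced by $\Idele{K_0} \hookrightarrow \Idele{K}$. By the main theorem of CM recalled in the introduction to this section, $\Theta := \Galgp{K_{(b)}}{{\mathcal M}_{K,b}}$ is the kernel of the half-norm map $(N_\Phi)_{\Phi\in\Lambda} : A \to \bigoplus_{\Phi\in\Lambda} \Idele{\kreflex(\Phi)}/\kreflex(\Phi)^\times U_{\kreflex(\Phi)}(b)$. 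I would set $H(b) := \Theta \cap V(A_0)$, so that $H(b) \subset \Theta$ is automatic. For $\alpha \in \Idele{K_0}$ with $N_{K_0/\RationalField}(\alpha) \in \RationalField^\times U_\RationalField(b)$ (the defining condition for the class to lie in $\Galgp{K_{0,(b)}}{K_0\cdot\RationalField_{(b)}}$), the restrictions $\Phi|_{K_0}$ run through all embeddings $K_0 \hookrightarrow \ComplexField$, so $N_\Phi(\alpha) = N_{K_0/\RationalField}(\alpha) \in \RationalField^\times U_\RationalField(b) \subset \kreflex(\Phi)^\times U_{\kreflex(\Phi)}(b)$ for every $\Phi$; this yields $V(\Galgp{K_{0,(b)}}{K_0\cdot\RationalField_{(b)}}) \subset H(b)$.

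For the inclusion $2^{N-1}\Theta \subset H(b)$ in (i), take $\sigma \in \Theta$ represented by $\alpha \in \Idele{K}$. Since $N_\Phi(\alpha) \in \kreflex(\Phi)^\times U_{\kreflex(\Phi)}(b)$, the class of $\alpha$ modulo $K^\times \Idele{K_0} U_K(b)$ lies in the kernel of the induced map into $\bigoplus_\Phi \Idele{\kreflex(\Phi)}/\kreflex(\Phi)^\times \Idele{\kreflex_0(\Phi)} U_{\kreflex(\Phi)}(b)$. Applying Proposition \ref{prop:multiplication by 2^N-1} to the $G$-module $M := \Idele{K^c}/K^{c\times}\Idele{K_0^c}U_{K^c}(b)$, on which $\iota$ acts as $-1$, and passing to $H$-fixed points in the same way as in the derivation of Corollary \ref{cor:2^N-1 on idele}, the composition of the two half-norm maps equals multiplication by $2^{N-1}$, so elements of the kernel are annihilated by $2^{N-1}$. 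Hence $\alpha^{2^{N-1}} \in K^\times \Idele{K_0} U_K(b)$, so $2^{N-1}\sigma \in V(A_0)$, and therefore $2^{N-1}\sigma \in V(A_0) \cap \Theta = H(b)$.

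For (ii), take $\sigma \in H(b)$ represented by $\alpha \in \Idele{K_0}$ and set $\delta := N_{K_0/\RationalField}(\alpha) \in \Idele{\RationalField}$. The condition $\sigma \in \Theta$ gives $\delta \in \kreflex(\Phi)^\times U_{\kreflex(\Phi)}(b)$ for each $\Phi$; writing $\delta = \gamma_\Phi u_\Phi$ with $\gamma_\Phi \in \kreflex(\Phi)^\times$ and $u_\Phi \in U_{\kreflex(\Phi)}(b)$, and applying $N_{\kreflex(\Phi)/\RationalField}$ (using the standard local fact $N_{\kreflex(\Phi)/\RationalField}(U_{\kreflex(\Phi)}(b)) \subset U_\RationalField(b)$), one obtains $\delta^{[\kreflex(\Phi):\RationalField]} \in \RationalField^\times U_\RationalField(b)$ for every $\Phi$. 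So the order of $\delta$ in $\Idele{\RationalField}/\RationalField^\times U_\RationalField(b)$ divides $\gcd_\Phi [\kreflex(\Phi):\RationalField]$. But by Lemma \ref{lem:the sum of the degrees}, $\sum_{\Phi\in\Lambda}[\kreflex(\Phi):\RationalField] = 2^N$, so no odd prime can divide every $[\kreflex(\Phi):\RationalField]$; hence this gcd is a $2$-power equal to $2^{\min_\Phi v_2([\kreflex(\Phi):\RationalField])} \leq 2^{N_0}$. We conclude $\delta^{2^{N_0}} \in \RationalField^\times U_\RationalField(b)$, which translates to $2^{N_0}\sigma \in V(\Galgp{K_{0,(b)}}{K_0\cdot\RationalField_{(b)}})$.

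The ``in particular'' statement is then immediate: combining (i) and (ii) gives $2^{N-1+N_0}\Theta \subset V(\Galgp{K_{0,(b)}}{K_0\cdot\RationalField_{(b)}}) \subset \Theta$, and since both bounds are $2$-powers, at any odd prime $p$ the containments become equalities on $p$-primary parts. The main subtlety lies in the middle paragraph, where a cohomological descent is needed to apply Proposition \ref{prop:multiplication by 2^N-1} in its variant with $K^\times$ divided out; this should parallel the derivation of Corollary \ref{cor:2^N-1 on idele} from Proposition \ref{prop:multiplication by 2^N-1}, the only point to check being that the $H$-fixed points of $M$ coincide with $\Idele{K}/K^\times\Idele{K_0}U_K(b)$ up to harmless cohomology.
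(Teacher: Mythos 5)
Your argument is, in substance, the paper's own proof: the paper likewise identifies $\Galgp{K_{(b)}}{{\mathcal M}_{K,b}}$ with the kernel $N_{2,b}$ of $(N_\Phi)_{\Phi \in \Lambda}$ on $C_K/V_K((b))$, takes for $H(b)$ the part of that kernel supported on $C_K^{\langle \iota \rangle}$ --- which by Hilbert 90 is exactly the image of $C_{K_0}$, i.e.\ your $\Theta \cap V(A_0)$ --- derives (i) from Corollary \ref{cor:2^N-1 on idele} applied to the quotient $C_K/C_K^{\langle \iota \rangle}V_K((b))$, and derives (ii) from the same $\gcd$-of-degrees computation via Lemma \ref{lem:the sum of the degrees} and the observation that $N_\Phi$ restricted to ideles of $K_0$ is $N_{K_0/\RationalField}$. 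The only organizational difference is that the paper packages the three kernels into a snake-lemma diagram, whereas you work with explicit idele representatives; the resulting subgroup $H(b)$ is the same.

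On the point you flag as the main subtlety: the descent is indeed harmless, but not for the reason you sketch. On the module $M := \Idele{K^c}/(K^c)^\times \Idele{K_0^c} U_{K^c}((b))$ the involution $\iota$ need \emph{not} act as $-1$ unless $C = \{id, \iota\}$, because $a \cdot \iota(a)$ lies in the ideles of the maximal totally real subfield of $K^c$ (the fixed field of $\langle \iota \rangle$), which is strictly larger than $K_0^c$ in general; and taking $H$-invariants of such a quotient would raise its own $H^1$ issues. The correct and much shorter route is to observe that $N_\Phi$ carries $K^\times$ into $\kreflex(\Phi)^\times$ and $N_{\Phi^*}$ carries $\kreflex(\Phi)^\times$ back into $K^\times$, so both maps of Corollary \ref{cor:2^N-1 on idele} pass to the further quotients by the principal ideles and their composition is still multiplication by $2^{N-1}$ on $\Idele{K}/K^\times \Idele{K_0} U_K((b))$; hence any class in the kernel of the first induced map is killed by $2^{N-1}$. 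This is exactly how the paper invokes Corollary \ref{cor:2^N-1 on idele}, since its group $N_{3,b}$ lives in $C_K / C_K^{\langle \iota \rangle} V_K((b)) = \Idele{K}/K^\times \Idele{K_0} U_K((b))$. With that substitution your proof is complete.
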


\begin{proof} 
For an ideal $(b)$, take $U_K((b))$ as (\ref{eq:U_K(b)}).
Let $V_K((b))$ be the image of $U_K((b))$ in $C_K$.
By class field theory, there is a commutative diagram:
	\[
	\begin{CD}
		C_{K_0} / V_{K_0}((b)) @> \stackrel{\phi_{K_0}}{\cong} >> \Galgp{K_{0, (b)}}{K_0} \\
		@VVV	@V{Ver.}VV	\\
		C_{K} / V_{K}((b)) @> \stackrel{\phi_{K}}{\cong} >> \Galgp{K_{(b)}}{K}
	\end{CD}
	\]
 
To prove the theorem, we use the commutative diagram of two exact sequences:
	\[
\hspace{-25mm}
	\small{
	\begin{CD}
		0 @>>> C_K^{\langle \iota \rangle} / V_K((b))^{\langle \iota \rangle} @>>> C_K / V_K((b)) @>>> C_K / {C_K^{\langle \iota \rangle} V_K((b))} @>>> 0 \\
		@.	@VVV	@V(N_\Phi)_{\Phi \in \Lambda}VV	@VVV	@.\\
		0
			@>>>
		\displaystyle \bigoplus_{\Phi \in \Lambda} C_{\kreflex(\Phi)}^{\langle \iota \rangle} / V_{\kreflex(\Phi)}((b))^{\langle \iota \rangle}
			@>>>
		\displaystyle \bigoplus_{\Phi \in \Lambda} C_{\kreflex(\Phi)} / V_{\kreflex(\Phi)}((b))
			@>>>
		\displaystyle \bigoplus_{\Phi \in \Lambda} C_{\kreflex(\Phi)} / C_{\kreflex(\Phi)}^{\langle \iota \rangle} V_{\kreflex(\Phi)}((b))
		@>>> 0 \\
	\end{CD}
	}
	\]
	\noindent
	where the superscript $\langle \iota \rangle$ means the subgroup fixed by the complex conjugation $\iota$.
	We denote each kernel of the vertical maps by
	\begin{eqnarray*}
		N_{1,b} &:=& \Kernel{ C_K^{\langle \iota \rangle} / V_K((b))^{\langle \iota \rangle}
						\longrightarrow \textstyle \bigoplus_{\Phi \in \Lambda} C_{\kreflex(\Phi)}^{\langle \iota \rangle} / V_{\kreflex(\Phi)}((b))^{\langle \iota \rangle} }, \\
		N_{2,b} &:=& \Kernel{ C_K/V_K((b))
						\longrightarrow \textstyle \bigoplus_{\Phi \in \Lambda} C_{\kreflex(\Phi)} / V_{\kreflex(\Phi)}((b)) }, \\
		N_{3,b} &:=& \Kernel{ C_K / {C_K^{\langle \iota \rangle} V_K((b))}
						\longrightarrow \textstyle \bigoplus_{\Phi \in \Lambda} C_{\kreflex(\Phi)} / C_{\kreflex(\Phi)}^{\langle \iota \rangle} V_{\kreflex(\Phi)}((b)) }.
 	\end{eqnarray*}
	By the snake lemma, we have the exact sequence:
	\[
	\begin{CD}
		0 @>>> N_{1,b} @>>> N_{2,b} @>>> N_{3,b}.
	\end{CD}
	\]
	Furthermore, by the theory of complex multiplication, $\Galgp{K_{(b)}}{{\mathcal M}_{K,b}}$ equals $\phi_K(N_{2,b})$.
	On the other hand, by Corollary \ref{cor:2^N-1 on idele}, all the elements of $N_{3,b}$ have an order dividing $2^{N-1}$.

	Let $H(b) := \phi_K(N_{1,b})$.  
	Then, it is clear that the assertion (\ref {item:1, thm_main}) of the theorem holds.

	Next, we show (\ref {item:2, thm_main});
	since $H^1(K^\times)=0$, we have $C_K^{\langle \iota \rangle} = \Idele{K}^{\langle \iota \rangle} / (K^\times)^{\langle \iota \rangle} = C_{K_0}$.
	Hence, there is a commutative diagram:
	\[
	\begin{CD}
		C_{K_0} / V_{K_0}((b)) @>>> C_K^{\langle \iota \rangle} / V_K((b))^{\langle \iota \rangle} @>>> 0 \text{ (exact) } \\
		@V{N_{K_0/\RationalField}}VV	@V(N_\Phi)_{\Phi \in \Lambda}VV	@.\\
		C_\RationalField / V_\RationalField((b))
			@>>>
		\displaystyle \bigoplus_{\Phi \in \Lambda} C_{\kreflex(\Phi)}^{\langle \iota \rangle} / V_{\kreflex(\Phi)}((b))^{\langle \iota \rangle}
		@. \\
	\end{CD}
	\]
	Let $J_{K_0,b}$ be the kernel of the left vertical map in the diagram, and $J_{K,b}$ be the image of $J_{K_0,b}$ in $C_K / V_K((b))$.
	Then, $\phi_K(J_{K,b}) = \Vermap{ \Galgp{K_{0, (b)}}{K_0 \cdot \RationalField_{(b)} } }$.
	Since $J_{K,b} \subset N_{1,b}$, (\ref {item:2, thm_main}) is proved if $2^{N_0} N_{1,b} \subset J_{K,b}$.

	Let $n_\Phi$ be the degree of $\kreflex(\Phi)$.
	If $a \in C_\RationalField$ belongs to $V_{\kreflex(\Phi)}((b))^{\langle \iota \rangle}$ for all CM-type $(K, \Phi)$,
 	then, $a^{n_\Phi} = N_{G/\hreflex(\Phi)}(a) \in V_\RationalField((b))$.
 	Hence, $a^m \in V_\RationalField((b))$, where $m$ is the greatest common divisor of $n_\Phi$ ($\Phi \in \Lambda$).
%
	Since the sum of $n_\Phi$ ($\Phi \in \Lambda$) equals $2^N$ by Lemma \ref{lem:the sum of the degrees}, $m$ equals $2^{N_0}$ in the statement of the theorem.
\end{proof}

In \cite{Wei}, Wei proved the following theorem on the abelian extension obtained from all the CM-types whose reflex is contained in $K$.
In the remaining part of this section, 
we prove the theorem by Wei,
using Theorem \ref{thm:main} and basic results of algebraic number theory.

\begin{thm}\label{thm:Wei} (Wei)
Let $\mathcal{M}_K$ be the subfield of $\AbelianExt{K}$ obtained by adjoining to $K$, 
the moduli and the torsion points of all the polarized abelian varieties of a CM-type with the reflex field contained in $K$.
The subgroup corresponding to $\mathcal{M}_K$
equals the image of $\Galgp{\AbelianExt{K_0}}{K_0 \cdot \AbelianExt{\RationalField} }$ in $\Galgp{\AbelianExt{K}}{K}$ under the Verlagerung map.
\end{thm}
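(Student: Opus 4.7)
The plan is to deduce Wei's theorem from Theorem \ref{thm:main} by taking projective limits over the natural number $b$. Since every conductor ideal of an abelian extension of $K$ (respectively $K_0$, $\RationalField$) divides some principal ideal $(b)$ for a natural number $b$ -- using the Kronecker--Weber theorem for $\RationalField$ -- one has $\AbelianExt{K} = \bigcup_b K_{(b)}$, $\AbelianExt{K_0} = \bigcup_b K_{0,(b)}$, and $\AbelianExt{\RationalField} = \bigcup_b \RationalField_{(b)}$; by definition $\mathcal{M}_K = \bigcup_b \mathcal{M}_{K,b}$. All the Galois groups in sight are therefore projective limits of their level-$b$ analogues, and the Verlagerung maps $\Galgp{K_{0,(b)}}{K_0} \to \Galgp{K_{(b)}}{K}$ assemble into a compatible system, inducing the Verlagerung map between the maximal abelian extensions.

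Combining parts (i) and (ii) of Theorem \ref{thm:main}, each level $b$ satisfies
\[
\Vermap{\Galgp{K_{0,(b)}}{K_0 \cdot \RationalField_{(b)}}} \subset \Galgp{K_{(b)}}{\mathcal{M}_{K,b}}, \qquad 2^{N+N_0-1}\Galgp{K_{(b)}}{\mathcal{M}_{K,b}} \subset \Vermap{\Galgp{K_{0,(b)}}{K_0 \cdot \RationalField_{(b)}}}.
\]
Taking projective limits, the first inclusion gives one direction of Wei's theorem,
\[
\Vermap{\Galgp{\AbelianExt{K_0}}{K_0 \cdot \AbelianExt{\RationalField}}} \subset \Galgp{\AbelianExt{K}}{\mathcal{M}_K},
\]
and the quotient $Q$ of these two subgroups is a profinite abelian group of exponent dividing $2^{N+N_0-1}$. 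For every odd prime $p$, the $p$-primary component of $Q$ vanishes, so the odd parts of the two subgroups already agree.

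The main obstacle is to eliminate the residual $2$-primary part of $Q$. The plan is to show $\ProjLim_b Q_b = 0$ at the prime $2$, where
\[
Q_b := \Galgp{K_{(b)}}{\mathcal{M}_{K,b}}/\Vermap{\Galgp{K_{0,(b)}}{K_0 \cdot \RationalField_{(b)}}}.
\]
Although each $Q_b$ may contain $2$-torsion, the transition maps $Q_{b'} \to Q_b$ for $b \mid b'$ should annihilate the $2$-primary part eventually: a class in $Q_b$ of $2$-power order corresponds to an abelian $2$-extension $L$ of $\mathcal{M}_{K,b}$ inside $K_{(b)}$, and the idea is to show that enlarging the modulus to a suitable multiple $b'$ of $b$ adjoins enough $2$-power torsion of the polarized CM abelian varieties so that $L \subset \mathcal{M}_{K,b'}$, killing the class in $Q_{b'}$. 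The $2^{N-1}$ bound on $N_{3,b}$ from Corollary \ref{cor:2^N-1 on idele} and the $2^{N_0}$ bound from extracting $2^{N_0}$-th roots in the idele class groups of the reflex fields both manifest at finite levels, but are expected to dissolve in the limit once the ray class fields $K_{(b')}$ for large $b'$ provide the missing $2$-power torsion. Once $\ProjLim_b Q_b = 0$ is established, the reverse containment $\Galgp{\AbelianExt{K}}{\mathcal{M}_K} \subset \Vermap{\Galgp{\AbelianExt{K_0}}{K_0 \cdot \AbelianExt{\RationalField}}}$ follows and the theorem is proved.
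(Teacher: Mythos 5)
Your reduction to a $2$-primary obstruction is sound: the inclusion $\Vermap{\Galgp{\AbelianExt{K_0}}{K_0 \cdot \AbelianExt{\RationalField}}} \subset \Galgp{\AbelianExt{K}}{\mathcal{M}_K}$ does follow by passing to the limit over $b$ in Theorem \ref{thm:main}, and combining (i) and (ii) correctly shows that the quotient $Q$ is a profinite abelian group killed by $2^{N+N_0-1}$, so only the $2$-part remains. But that remaining step is the entire substance of Wei's theorem, and your proposal for it is a heuristic, not an argument. You assert that the transition maps $Q_{b'} \to Q_b$ ``should'' eventually annihilate the $2$-torsion because larger moduli ``are expected to'' supply the missing $2$-power torsion points; no mechanism is given for why a class surviving the $2^{N-1}$ and $2^{N_0}$ bounds at level $b$ would be absorbed into $\mathcal{M}_{K,b'}$ at a larger level. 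Indeed the bounds in Theorem \ref{thm:main} are uniform in $b$, so nothing in that theorem alone forces the obstruction to shrink as $b$ grows; the claim that it dies in the limit is essentially equivalent to the statement being proved.

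The paper closes this gap by working directly at infinite level with the connected component $D_K$ of $C_K$ (which is $\ProjLim_b V_K((b))$ in the relevant sense), via two inputs that are absent from your proposal. First, Lemmas \ref{lem:5.4} and \ref{lem:5.5}: an element of $C_L$ that is of finite order modulo $D_L$ (or modulo $\widetilde{L}_\infty^\times D_L$) is represented by an idele all of whose components are roots of unity --- this uses a splitting argument to show $c^{1/m} \in L(\zeta_m)$ and then $c^{1/m} \in L$. Second, the injectivity of $C_\RationalField/D_\RationalField \to C_{\kreflex_0(\Phi)}/D_{\kreflex_0(\Phi)}$ for totally real fields (cited from Artin--Tate), used to prove $N_1 = J_K$ (Lemma \ref{lem:5.1}). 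With these, Lemma \ref{lem:5.2} shows $N_3 = \{0\}$: if $a^{2^{N-1}(1-\iota)} \in D_K$, then $a^{1-\iota}$ is represented by an idele of root-of-unity components whose half norms lie in $D_{\kreflex(\Phi)}$, forcing it to be a root of unity, hence $a^{1-\iota} \in D_K$. Without some substitute for these root-of-unity arguments about $D_K$, your proof does not go through.
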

\begin{proof}
Denote each kernel of the maps induced by $(N_\Phi)_{\Phi \in \Lambda}$, by
\begin{eqnarray*}
	N_1 &:=& \Kernel{ C_K^{\langle \iota \rangle} / D_K^{\langle \iota \rangle}
					\longrightarrow \textstyle \bigoplus_{\Phi \in \Lambda} C_{\kreflex(\Phi)}^{\langle \iota \rangle} / D_{\kreflex(\Phi)}^{\langle \iota \rangle} }, \\
	N_2 &:=& \Kernel{ C_K/D_K
						\longrightarrow \textstyle \bigoplus_{\Phi \in \Lambda} C_{\kreflex(\Phi)} / D_{\kreflex(\Phi)} }, \\
	N_3 &:=& \Kernel{ C_K / {C_K^{\langle \iota \rangle} D_K}
						\longrightarrow \textstyle \bigoplus_{\Phi \in \Lambda} C_{\kreflex(\Phi)} / C_{\kreflex(\Phi)}^{\langle \iota \rangle} D_{\kreflex(\Phi)} }.
\end{eqnarray*}
These groups equal
	$\displaystyle \ProjLim_b N_{1,b}$, 
	$\displaystyle \ProjLim_b N_{2,b}$, 
	$\displaystyle \ProjLim_b N_{3,b}$ respectively,
the inverse limits of the groups defined in the proof of Theorem \ref{thm:main}.
In addition, we define
\begin{eqnarray*}
	J_{K_0} := \Kernel{ N_{K_0/\RationalField} : C_{K_0} / D_{K_0}
						\longrightarrow C_\RationalField / D_\RationalField }.
\end{eqnarray*}
and denote the image in $C_K / D_K$ by $J_K$.
$J_{K_0}$ is also the inverse limit of $J_{K_0, b}$ in the proof of Theorem \ref{thm:main}.
Then, Theorem \ref{thm:Wei} follows from Lemma \ref{lem:5.1} and Lemma \ref{lem:5.2}.
\end{proof}

\begin{lem}\label{lem:5.1}
	$N_1=J_K$.
\end{lem}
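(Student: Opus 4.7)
The plan is to identify both $N_1$ and $J_K$, viewed as subgroups of $C_K/D_K$ coming from $C_{K_0}/D_{K_0}$, and to reduce the equality to a comparison of kernels of norm maps on $C_{K_0}/D_{K_0}$.

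First, I would apply Hilbert 90 exactly as in the proof of Theorem~\ref{thm:main}: the vanishing of $H^1(\langle\iota\rangle,K^\times)$ gives $C_K^{\langle\iota\rangle}=C_{K_0}$, and analogously $C_{\kreflex(\Phi)}^{\langle\iota\rangle}=C_{\kreflex_0(\Phi)}$ for every $\Phi\in\Lambda$. Passing to inverse limits of the $V((b))$-quotient diagrams in that proof yields $D_K^{\langle\iota\rangle}=D_{K_0}$ and $D_{\kreflex(\Phi)}^{\langle\iota\rangle}=D_{\kreflex_0(\Phi)}$, so $N_1$ is canonically a subgroup of $C_{K_0}/D_{K_0}$. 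The crucial calculation is that on $\Idele{K_0}$ the half norm collapses to the full norm,
\[
N_\Phi(a)=\prod_{\varphi\in\Phi}\varphi(a)=N_{K_0/\RationalField}(a),
\]
since the restrictions $\varphi|_{K_0}$ for $\varphi\in\Phi$ exhaust every embedding $K_0\hookrightarrow\ComplexField$ exactly once (the defining property of a CM-type). Consequently each $N_\Phi|_{C_{K_0}}$ factors as $C_{K_0}\xrightarrow{N_{K_0/\RationalField}}C_\RationalField\longrightarrow C_{\kreflex_0(\Phi)}$, with the last map induced by $\RationalField\hookrightarrow\kreflex_0(\Phi)$.

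The inclusion $J_K\subset N_1$ is then immediate: for $a\in J_{K_0}$, $N_{K_0/\RationalField}(a)\in D_\RationalField\subset D_{\kreflex_0(\Phi)}$ for every $\Phi$, so the class of $a$ lies in the kernel defining $N_1$, and pushing to $C_K/D_K$ gives $J_K\subset N_1$. For the reverse inclusion, the task reduces via the factorization above to showing that if $b\in C_\RationalField$ has image in $D_{\kreflex_0(\Phi)}$ inside $C_{\kreflex_0(\Phi)}$ for every $\Phi\in\Lambda$ then $b\in D_\RationalField$; equivalently, that the natural map
\[
C_\RationalField/D_\RationalField\longrightarrow\bigoplus_{\Phi\in\Lambda}C_{\kreflex_0(\Phi)}/D_{\kreflex_0(\Phi)}
\]
is injective. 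Once this is granted, $b\in D_\RationalField$ forces $a\in J_{K_0}$, and its image in $C_K/D_K$ lies in $J_K$.

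The main obstacle is the injectivity just stated. By class field theory and Kronecker-Weber, $C_\RationalField/D_\RationalField\cong\Galgp{\AbelianExt{\RationalField}}{\RationalField}\cong\hat\IntegerRing^\times$, and the claim translates into the assertion that no nontrivial abelian extension of $\RationalField$ is simultaneously ``killed'' in every reflex field $\kreflex_0(\Phi)$. The proof of Theorem~\ref{thm:main} already yields the weaker $b^{2^{N_0}}\in D_\RationalField$, so only a $2$-primary ambiguity needs to be removed. I would attempt to close this gap by exploiting the combinatorial structure of $\Lambda$ developed in Section~\ref{Some algebraic properties of CM-fields and their reflex fields}---in particular Lemma~\ref{lem:the sum of the degrees} together with the $G$-action on CM-types from Section~\ref{The conjugacy among CM-types over RationalField}---to show that the collection $\{\kreflex_0(\Phi)\}_{\Phi\in\Lambda}$ is arithmetically rich enough to rule out any common abelian subextension of $\AbelianExt{\RationalField}$, so that the kernel of the displayed map is trivial.
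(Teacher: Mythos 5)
Your overall strategy is the paper's: identify $N_1$ with a kernel living over $C_{K_0}$, observe that the half norm collapses to $N_{K_0/\RationalField}$ on the $\iota$-fixed part, and reduce everything to an injectivity statement for $C_\RationalField/D_\RationalField$ mapping into the reflex data. But there are two genuine gaps. First, the identification $D_K^{\langle\iota\rangle}=D_{K_0}$ is false. Since $K_\infty^\times\cong(\ComplexField^\times)^N$ is connected, $D_K$ contains the image of all of $K_\infty^\times$, whose $\iota$-fixed part is the image of the full archimedean part $(K_0)_\infty^\times\cong(\RealField^\times)^N$, not just its totally positive component. The correct statement, which the paper uses, is $D_K^{\langle\iota\rangle}=(\widetilde{K_0})_\infty^\times D_{K_0}$, so $N_1$ is a kernel inside $C_{K_0}/(\widetilde{K_0})_\infty^\times D_{K_0}$ rather than inside $C_{K_0}/D_{K_0}$. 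This extra $2$-torsion ambiguity is exactly what Lemma~\ref{lem:5.5} is there to remove: the paper first squares to land in $D_{\kreflex_0(\Phi)}$, applies the injectivity below to get $a^2\in D_\RationalField$, and then invokes Lemma~\ref{lem:5.5} to conclude $a\in\widetilde{\RealField}^\times D_\RationalField$. Your argument as written never confronts this step.

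Second, and more seriously, the injectivity of $C_\RationalField/D_\RationalField\to C_{\kreflex_0(\Phi)}/D_{\kreflex_0(\Phi)}$ cannot be extracted from the combinatorics of $\Lambda$, Lemma~\ref{lem:the sum of the degrees}, or the $G$-action on CM-types: those only ever produce the $2$-power bound $b^{2^{N_0}}\in D_\RationalField$ that you already have from Theorem~\ref{thm:main}, and the obstruction you need to kill is precisely $2$-primary. The input the paper uses is arithmetic, not combinatorial: for an extension of totally real number fields the canonical map $C_F/D_F\to C_E/D_E$ is injective (Artin--Tate \cite{Artin}); since $\RationalField$ and every $\kreflex_0(\Phi)$ are totally real, a single $\Phi$ already gives the injectivity you want. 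Without importing that theorem (or an equivalent arithmetic statement), your reduction does not close.
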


\begin{lem}\label{lem:5.2}
	$N_3 = \{0 \}$.
\end{lem}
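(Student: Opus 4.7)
\emph{Proof plan.}
The plan is to reduce the defining condition of $N_3$ to a statement on the $\iota$-minus part of $C_K$, apply Corollary~\ref{cor:2^N-1 on idele} to derive an exponent bound on $N_3$, and then promote that bound to actual vanishing.

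First, let $a\in C_K$ lift a class in $N_3$, so that $N_\Phi(a)\in C_{\kreflex(\Phi)}^{\langle\iota\rangle}D_{\kreflex(\Phi)}$ for every $\Phi\in\Lambda$. Because $\iota$ is central in $G$, it commutes with every embedding $\varphi\in\Phi$, so $\iota N_\Phi(a)=N_\Phi(\iota a)$; combined with the $\iota$-stability of $D_{\kreflex(\Phi)}$ (it is a connected component) and the triviality of $\iota$ on $C_{\kreflex(\Phi)}^{\langle\iota\rangle}$, forming the quotient $N_\Phi(a)/\iota N_\Phi(a)$ gives $N_\Phi\bigl(a(\iota a)^{-1}\bigr)\in D_{\kreflex(\Phi)}$ for every $\Phi$.

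Next I would invoke the inverse limit over $b$ of Corollary~\ref{cor:2^N-1 on idele}, which is legitimate because $\bigcap_b V_K((b))=D_K$ (a consequence of $\bigcup_b K_{(b)}=\AbelianExt{K}$, in analogy with Lemma~\ref{lem:5.1}). The composition $\sum_\Phi N_{\Phi^*}\circ (N_\Phi)_\Phi$ then equals multiplication by $2^{N-1}$ on $C_K/D_KC_{K_0}$, so applying it to $a(\iota a)^{-1}$ yields $2^{N-1}(1-\iota)a\in D_KC_{K_0}$. Writing $2^{N-1}(1-\iota)a=d+c$ with $d\in D_K$ and $c\in C_{K_0}$, applying $\iota$ (which sends $(1-\iota)a$ to its additive inverse and fixes $c$), and subtracting the two resulting relations makes the $C_{K_0}$-part cancel and produces $(1-\iota)d=2^N(1-\iota)a$. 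Consequently $2^Na-d\in C_K^{\langle\iota\rangle}$, so $a^{2^N}\in C_K^{\langle\iota\rangle}D_K$, showing that every class in $N_3$ has order dividing $2^N$.

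The main obstacle is upgrading this bounded-exponent statement to $N_3=\{0\}$. My plan is to exploit the presentation $N_3=\ProjLim_b N_{3,b}$, where each $N_{3,b}$ is a finite subgroup of $C_K/(C_K^{\langle\iota\rangle}V_K((b)))$ of exponent at most $2^N$, and to argue that the $2$-primary torsion does not survive the transition maps. Concretely, for any nonzero class in $N_{3,b}$ of $2$-power order, enlarging $b$ by a high power of $2$ refines the local factor $(1+b\mathcal{O}_\mathfrak{p})$ of $U_K((b))$ at the primes above $2$ enough to violate the defining condition at those places; a careful local analysis at dyadic primes, splitting into the cases split, inert, and ramified in $K/K_0$ (where $\iota$ acts differently), should force the projective limit to be trivial. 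This descent at the dyadic primes is the delicate step of the lemma; the earlier two steps are a formal packaging of Corollary~\ref{cor:2^N-1 on idele} together with the centrality of $\iota$ in $G$.
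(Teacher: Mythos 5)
Your first two steps are sound and agree with the paper's opening moves: you reduce the hypothesis to $N_\Phi(a^{1-\iota}) \in D_{\kreflex(\Phi)}$ for all $\Phi$, and you extract the exponent bound $a^{2^N} \in C_K^{\langle\iota\rangle}D_K$ from the inverse limit of Corollary \ref{cor:2^N-1 on idele} (the paper quotes Theorem \ref{thm:main} to get $a^{2^{N-1}} \in C_K^{\langle\iota\rangle}D_K$; the discrepancy in the exponent is immaterial).

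The genuine gap is your third step, which you yourself leave as a plan (``should force'', ``my plan is'') rather than an argument, and the plan points in a direction that does not address the actual obstruction. Refining the dyadic factors $(1+b\mathcal{O}_{\mathfrak p})$ of $U_K((b))$ and doing a split/inert/ramified case analysis above $2$ cannot by itself kill a compatible system of $2$-torsion classes: the quotients $C_K/C_K^{\langle\iota\rangle}V_K((b))$ map onto quotients of $\Galgp{\AbelianExt{K}}{K}$ that carry nontrivial $2$-torsion at every level in a compatible way, so the projective limit of ``exponent $\le 2^N$'' groups need not vanish for formal reasons, and the constraint that must be exploited --- exact membership $N_\Phi(a^{1-\iota}) \in D_{\kreflex(\Phi)}$ for \emph{every} $\Phi$ --- is global rather than dyadic-local. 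The paper closes this gap with Lemmas \ref{lem:5.4} and \ref{lem:5.5}: from $(a^{1-\iota})^{2^{N-1}} \in D_K$ one represents the class of $a^{1-\iota}$ by an idele $b$ all of whose components are $2^{N-1}$-th roots of unity; the conditions $N_\Phi(b) \in D_{\kreflex(\Phi)}$ combined with this root-of-unity shape force each $N_\Phi(b)$ to be a genuine global root of unity, whence $b^{1-\iota}=b^2$ and then $b$ itself are roots of unity, giving $a^{1-\iota} \in D_K$. The ingredient missing from your proposal is precisely this structure theory of the connected component (an element of $C_L$ with a torsion power in $D_L$ is, modulo $D_L$, an idele of roots of unity); without it the bounded-exponent statement cannot be upgraded to $N_3=\{0\}$.
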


Lemma \ref{lem:5.5} is used to prove them.
First, we prove Lemma \ref{lem:5.5} using Lemma \ref{lem:5.4}.

Let $E_L$ be the group of units of a number field $L$.
For any subgroup $U \subset \Idele{L}$, $\overline{U}$ means its topological closure.
The image of $L_\infty^\times \subset \mathbb{I}_L$ in $C_L$ is denoted by $\widetilde{L}_\infty^\times$.
.


\begin{lem}\label{lem:5.4}
	Let $L$ be a number field.
	If $a \in \Idele{L}$ satisfies $a^m \in \overline{L^\times}$ (resp. $a^m \in \overline{ L_\infty^\times L^\times }$) for a natural number $m$, 
	there exists $d \in \overline{L^\times}$ such that $a^m = d^m$ (resp. $a_{\mathfrak p}^m = d_{\mathfrak p}^m$ at any finite primes ${\mathfrak p}$).
  	In particular, if $a \in C_L$ satisfies $a^m \in D_L$ (resp. $a \in \widetilde{L}_\infty^\times D_L$), then, 
	$a$ corresponds to $b \in \Idele{L}$
 	such that all the components $b_\nu$ at any primes (resp. any finite primes) $\nu$ are $m$-th roots of unity.
\end{lem}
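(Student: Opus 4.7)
The plan is to reduce the $\Idele{L}$-statements to a local-global principle for $m$-th powers and then derive the $C_L$-statements by lifting. First, one checks that $L^\times$ is discrete, and hence closed, in $\Idele{L}$ (a consequence of the product formula together with the compactness of $\Idele{L}^1/L^\times$); a similar argument shows $L_\infty^\times L^\times$ is closed. Therefore $\overline{L^\times} = L^\times$ and $\overline{L_\infty^\times L^\times} = L_\infty^\times L^\times$, and the hypothesis $a^m \in \overline{L^\times}$ says precisely that $a^m = \alpha$ for some $\alpha \in L^\times$ (respectively $a^m = u\alpha$ with $u \in L_\infty^\times$ and $\alpha \in L^\times$).

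In the first case, because $a \in \Idele{L}$ one has $a_v^m = \alpha$ in every completion $L_v$, so $\alpha$ is a local $m$-th power at every place. I would then invoke the Grunwald--Wang theorem in its strong form: since local $m$-th-root data are supplied at \emph{every} place (including the potentially problematic dyadic primes), the Wang exception cannot occur, and one obtains $d \in L^\times$ with $d^m = \alpha$, hence $a^m = d^m$. The parenthetical case is parallel but restricted to finite primes: writing $a^m = u\alpha$ with $u \in L_\infty^\times$, one has $a_{\mathfrak{p}}^m = \alpha$ at every finite $\mathfrak{p}$, so $\alpha$ is a local $m$-th power at every finite place. Grunwald--Wang applied with finite-place local data (using the freedom at archimedean places to choose any local root) then yields $d \in L^\times$ with $d_{\mathfrak{p}}^m = a_{\mathfrak{p}}^m$ at every finite prime.

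For the ``In particular'' assertion, lift $a \in C_L$ to $\tilde{a} \in \Idele{L}$. Using the description of the connected component $D_L$ as the closure of the image of the identity component $(L_\infty^\times)^0$ in $C_L$, the hypothesis $a^m \in D_L$ translates, after modifying $\tilde{a}$ by a suitable element of $L^\times$ and absorbing a factor from $(L_\infty^\times)^0$, into $\tilde{a}^m \in \overline{L^\times}$; similarly $a \in \widetilde{L}_\infty^\times D_L$ becomes $\tilde{a}^m \in \overline{L_\infty^\times L^\times}$. Applying the main part proved above produces $d \in L^\times$ with $\tilde{a}^m = d^m$ (respectively $\tilde{a}_{\mathfrak{p}}^m = d_{\mathfrak{p}}^m$ at every finite $\mathfrak{p}$). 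Then $b := \tilde{a} d^{-1}$ is still a lift of $a$ to $\Idele{L}$ and satisfies $b^m = 1$ at every place (respectively at every finite place), so all the components of $b$ are $m$-th roots of unity, as required.

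The hardest step will be the invocation of Grunwald--Wang in a form strong enough to rule out the special $2$-adic counterexample of Wang: this is legitimate here precisely because the hypothesis $a \in \Idele{L}$ supplies local $m$-th roots at every place (first case) or at every finite place (parenthetical case), so the special case of Grunwald--Wang does not arise. A secondary technical point is the identification of the preimage of $D_L$ in $\Idele{L}$ with a subset of $\overline{L^\times}$ after an $L^\times$-adjustment of the lift, which rests on the standard structure theorem for the connected component of the idele class group.
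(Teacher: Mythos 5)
Your reduction rests on two claims that do not hold, and both are load-bearing. First, while $L^\times$ is indeed discrete and hence closed in $\Idele{L}$, the subgroup $L_\infty^\times L^\times$ is \emph{not} closed once $L$ has a unit of infinite order. If $\epsilon$ is such a unit, choose exponents $k_n$ so that the principal ideles $\epsilon^{k_n}$ converge in the compact group $\prod_{\mathfrak p}\mathcal{O}_{\mathfrak p}^\times$ to some $x$ not lying in $E_L$ (possible because the closure of $\langle\epsilon\rangle$ there is an uncountable procyclic group while $E_L$ is countable); multiplying $\epsilon^{k_n}$ by the archimedean idele with components $\sigma(\epsilon)^{-k_n}$ gives a sequence in $L_\infty^\times L^\times$ converging to the idele that is $1$ at infinity and $x$ at the finite places, which is not of the form $u\gamma$ with $u\in L_\infty^\times$, $\gamma\in L^\times$. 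This is exactly the phenomenon that makes $D_L$ a solenoid rather than the bare image of $(L_\infty^\times)^0$. Consequently, in the parenthetical case --- the one actually used downstream in Lemmas \ref{lem:5.1} and \ref{lem:5.5} --- you cannot write $a^m = u\alpha$; the closure genuinely contains limits of units, and the whole point of the paper's argument (choosing $U_m$ with $E_L\cap U_m\subset E_L^m$, then shrinking to $U'$ and extracting the relation $a^m\equiv\epsilon^m c$) is to control precisely those unit contributions. Your treatment of the ``in particular'' part inherits the same defect: the preimage of $D_L$ in $\Idele{L}$ is $\overline{(L_\infty^\times)^0 L^\times}$, and no $L^\times$-adjustment or archimedean absorption places it inside $\overline{L^\times}=L^\times$.

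Second, the Grunwald--Wang step is invoked on a false premise. The special case of the Grunwald--Wang theorem \emph{can} occur with $S=\emptyset$, i.e., for an element that is a local $m$-th power at every place, dyadic and archimedean included: Wang's own example is $16\in\RationalField(\sqrt{7})$ with $m=8$. At the prime above $2$ one has $\RationalField_2(\sqrt{7})=\RationalField_2(i)$ (since $-7\equiv 1 \bmod 8$) and $(1+i)^8=16$; at every odd prime one of $2,-2,-1$ is a square so one of $\sqrt{2},\sqrt{-2},1+i$ is a local $8$th root; at the real places $\sqrt{2}$ works; yet none of $2,-2,-1$ is a square in $\RationalField(\sqrt{7})$, so $16$ is not a global $8$th power. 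Supplying local roots at the dyadic primes therefore does not rule out the exception, and your argument does not produce the required $d$. Note that the paper does not quote Grunwald--Wang as a black box: it derives $c\in(L_{\mathfrak p}^\times)^m$ for all finite $\mathfrak p$ from the unit-filtration argument, concludes $c^{1/m}\in L(\zeta_m)$ by a splitting-of-primes (Chebotarev) argument, and only then descends to $L$. Your route, as written, runs head-on into the very dyadic exception you assert cannot arise, so even the non-parenthetical case is not established.
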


\begin{proof}
	Let $U_m \subset \Idele{L}$ be an open subgroup such that $E_L \cap U_m \subset E_L^m$.
	Take $c \in L^\times$ such that $a^m \equiv c$ mod $U_m$.
	For any open subgroup $U^\prime \subset U_m$,
	there exists a unit $\epsilon$ of $L$ such that $a^m \equiv \epsilon^m c$ mod $U^\prime$.
	Therefore, for any finite prime ${\mathfrak p}$ of $L$, $c^{1/m} \in L_{\mathfrak p}$. 
	This means that the primes of $L(\zeta_m)$ except for a finite number,
	are totally split over $L(c^{1/m}, \zeta_m)$, where $\zeta_m$ is a primitive $m$-th root of unity.
	Hence, $c^{1/m}$ belongs to $L(\zeta_m)$.
	Then, $c^{1/m} \in L$, again since we have $c^{1/m} \in L_{\mathfrak p}$ for any finite prime ${\mathfrak p}$. Therefore, the first statement follows.
	(Suppose that $U_m, U^\prime \supset L_\infty^\times$ for the case of $a^m \in \overline{ L_\infty^\times L^\times }$.)
 	The second statement follows immediately.
\end{proof}

\begin{lem}\label{lem:5.5}
	Let  $L \subset M$ be number fields. 
	If $a \in C_L$ satisfies $a \in D_M$ (resp. $a \in \widetilde{M}_\infty^\times D_M$) and $a^m \in D_L$ for a natural number $m$,  
	then, $a \in D_L$ (resp. $a \in \widetilde{L}_\infty^\times D_L$).
\end{lem}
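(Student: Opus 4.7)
The plan is to apply Lemma \ref{lem:5.4} to replace $a$ by an idele whose every local component is an $m$-th root of unity, and then combine this torsion constraint with the hypothesis $a \in D_M$ by means of the Chebotarev-density argument used in the proof of Lemma \ref{lem:5.4}.

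First I would apply Lemma \ref{lem:5.4} in $L$: picking any representative $a' \in \Idele{L}$ of $a$, the hypothesis $a^m \in D_L$ means $a'^m \in \overline{L^\times (L_\infty)_+^\times}$, so by Lemma \ref{lem:5.4} (respective case) there is $d \in \overline{L^\times (L_\infty)_+^\times}$ with $a'^m = d^m$. Setting $b := a'/d$, every local component $b_\nu$ is then an $m$-th root of unity, $b^m = 1$ in $\Idele{L}$, and $a \equiv [b] \pmod{D_L}$. Hence it suffices to prove $b \in \overline{L^\times (L_\infty)_+^\times}$ inside $\Idele{L}$.

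Next, the hypothesis $a \in D_M$, combined with the fact that $D_L$ maps into $D_M$ under $C_L \to C_M$, gives $[b] \in D_M$, so $b$ regarded inside $\Idele{M}$ lies in $\overline{M^\times (M_\infty)_+^\times}$. For every open subgroup $V \subseteq \Idele{M}$, we can thus write $b = \beta \gamma v$ with $\beta \in M^\times$, $\gamma \in (M_\infty)_+^\times$, and $v \in V$. Raising to the $m$-th power and using $b^m = 1$ yields $\beta^m \gamma^m = v^{-m}$; at the finite primes $\mathfrak q$ of $M$, where $\gamma_\mathfrak q = 1$, this forces $\beta^m_\mathfrak q$ to lie in an arbitrarily small neighborhood of $1$. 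Choosing $V$ so that $E_M \cap V \subseteq E_M^m$, analogously to the subgroup $U_m$ used in the proof of Lemma \ref{lem:5.4}, one can then absorb $\beta$ into an $m$-th power of an element of $M^\times$ up to a root of unity and a global unit.

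The decisive step is the descent from an $M^\times$-approximation to an $L^\times$-approximation. Here I would run the Chebotarev/Kummer argument of Lemma \ref{lem:5.4}'s proof, applied to the splitting of primes in $L(\zeta_m, c^{1/m})/L(\zeta_m)$ for a suitable $c \in L^\times$, using as the key input that $b_\nu$ lies in $L_\nu$ at every place $\nu$ of $L$. Combined with the local $m$-th power structure of $\beta$ produced in the previous step, this recovers an approximation of $b$ by elements of $L^\times (L_\infty)_+^\times$ modulo any prescribed open subgroup of $\Idele{L}$, which is exactly the required inclusion $b \in \overline{L^\times (L_\infty)_+^\times}$. The respective version follows by the same argument with the archimedean part of $V$ allowed to be larger. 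The hard part will be this final descent; what makes it available is the constraint $b^m = 1$ supplied by Lemma \ref{lem:5.4}, which is precisely the hypothesis needed to reuse that lemma's Chebotarev-density reasoning in the present setting.
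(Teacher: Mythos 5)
Your opening reduction is the same as the paper's: apply Lemma \ref{lem:5.4} to replace $a$ by an idele $b$ all of whose local components are $m$-th roots of unity, and then use $a\in D_M$ to approximate $b$, modulo any open subgroup, by global elements of $M$ which (because the finite components of $b$ are local units) must be units $\epsilon\in E_M$. Up to that point you are on track, apart from a technical slip: to land the final approximation inside a \emph{prescribed} open subgroup $U$ you need the two-exponent device of the paper (first fix $l$ with $E_M^l\subset U$, then shrink to $U'$ with $U'\cap E_M\subset E_M^{ml}$, so that $\epsilon^m\in E_M^{ml}$ gives $\zeta\epsilon\in E_M^l\subset U$); requiring only $E_M\cap V\subseteq E_M^m$ does not control where the extracted unit ends up.

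The genuine gap is your ``decisive step.'' You propose to finish by a descent from an $M^\times$-approximation to an $L^\times$-approximation via the Chebotarev/Kummer argument of Lemma \ref{lem:5.4}, applied to $L(\zeta_m,c^{1/m})/L(\zeta_m)$ for ``a suitable $c\in L^\times$.'' That argument proves that a global element which is locally an $m$-th power everywhere is globally an $m$-th power; it says nothing about membership of an idele class in a connected component, and you never identify the element $c$ it would be applied to --- at this stage of the proof the only global approximants in hand are units of $M$, not elements of $L$. Note also that the naive descent statement $D_M\cap C_L=D_L$ is exactly what one may \emph{not} assume (the lemma's extra hypothesis $a^m\in D_L$ exists precisely because of this), so the step cannot be waved through. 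What actually closes the argument in the paper is not a splitting-of-primes argument at all: since $\epsilon^m\equiv b^m\equiv 1$ forces $\epsilon^m\in E_M^{ml}$, the structure theorem for $E_M$ (finitely generated with finite torsion $\mu_M$) forces $\epsilon$ to be a root of unity times an element of $E_M^l\subset U$; hence $b$ is congruent, modulo every open subgroup, to an element of the \emph{finite} set $\mu_M$, so $b$ equals a root of unity and is therefore principal, giving $a\in D_L$ directly. You should replace your final paragraph by this finiteness argument; as written, the proposal reduces the lemma to an unproved and misdirected claim.
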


\begin{proof}
	By Lemma \ref{lem:5.4}, we may assume that the components of $a$ at any primes are $m$-th roots of unity.
	Let $U$ be an open subgroup in $\Idele{M}$ such that no roots of unity of $M$ are contained in it.  
	Take a positive number $l$ such that $E_M^l \subset U \cap E_M$.
	For any open subgroup $U^\prime \subset U$ such that $U^\prime \cap E_M \subset E_M^{ml}$, there exists $\epsilon \in E_M$ such that $a \equiv \epsilon$ mod $U^\prime$, because $a \in D_M$.
	Then, $\epsilon^m \in E_M^{ml}$,
	since $\epsilon^m \equiv a^m \equiv 1$ mod $U^\prime$.
	Therefore, $\zeta \epsilon \in E_M^l$ for some root of unity $\zeta \in M$.
	In particular, $a \equiv \epsilon \equiv \zeta$ mod $U$.
	Since $U$ is arbitrary, $a = \zeta$ for some root of unity $\zeta$ in $L$. Therefore, $a \in D_L$.
	The proof is similar for the case of $a \in \widetilde{M}_\infty^\times D_M$.
\end{proof}

\begin{proof}[Proof of Lemma \ref{lem:5.1}]
	Since $D_K^{\langle \iota \rangle} = (\widetilde{K_0})_\infty^\times D_{K_0}$, 
	we have $C_K^{\langle \iota \rangle}/D_K^{\langle \iota \rangle} = C_{K_0} / (\widetilde{K_0})_\infty^\times D_{K_0}$. 
	Therefore,
	\begin{eqnarray*}
		N_1 = \Kernel{ C_{K_0} / (\widetilde{K_0})_\infty^\times D_{K_0} \longrightarrow \textstyle \bigoplus_{\Phi \in \Lambda} C_{\kreflex_0(\Phi)} / (\widetilde{\kreflex_0(\Phi)})_\infty^\times D_{\kreflex_0(\Phi)} }.
	\end{eqnarray*}
	This map is via the norm map $N_{K_0/\RationalField} : C_{K_0} / \widetilde{(K_0)}_\infty^\times D_{K_0} \longrightarrow C_\RationalField / \widetilde{\RealField}^\times D_\RationalField$. 
	Take an element $a \in C_\RationalField$ that belongs to $ (\widetilde{\kreflex_0(\Phi)})_\infty^\times D_{\kreflex_0(\Phi)}$.  
	Then, $a^2 \in D_{\kreflex_0(\Phi)}$.
	In addition, 
	the canonical map $C_\RationalField / D_\RationalField \longrightarrow C_{\kreflex_0(\Phi)} / D_{\kreflex_0(\Phi)}$ is injective,
	because both $\kreflex_0(\Phi)$ and $\RationalField$ are totally real fields \cite{Artin}.
	Hence, $a^2 \in D_\RationalField$.  
	Therefore, $a \in \widetilde{\RealField}^\times D_\RationalField$ by Lemma \ref{lem:5.5};
	the statement follows.
\end{proof}

\begin{proof} [Proof of Lemma \ref{lem:5.2}]
	Take $a \in C_K$ such that $N_\Phi(a) \in C_{\kreflex(\Phi)}^{\langle \iota \rangle} D_{\kreflex(\Phi)}$ for all the CM-types $(K, \Phi)$.
	It is enough if we can show $a \in C_K^{\langle \iota \rangle} D_K$, which is equivalent to $a^{1-\iota} \in D_K$. 
	By Theorem $\ref{thm:main}$, $a^{2^{N-1}} \in C_K^{\langle \iota \rangle} D_K$.  Hence, $a^{2^{N-1}(1-\iota)} \in D_K$.
	By Lemma \ref{lem:5.5}, there exists $b \in \Idele{K}$ such that $b \equiv a^{1-\iota}$ mod $D_K$, 
	and all the components of $b$ at the primes of $K$, are $2^{N-1}$-th roots of unity.  
	Moreover, $N_\Phi(b)$ is a root of unity of $\kreflex(\Phi)$ for any $\Phi$, since $N_\Phi(b) \in D_{\kreflex(\Phi)}$.
	Hence, $b^{1-\iota} = b^2$ is a root of unity, and $b$ is also a root of unity.
	Therefore, $a^{1-\iota} \in D_K$.  
\end{proof}

%
%
\section{A Character identity}
\label{Character identity}
In this section, we give a proof of a character identity between the Artin $L$-functions of a CM-field $K$ and the reflex of $K$.

For any group $N_0$ and a normal subgroup $N$ such that $[ N_0 : N ] = 2$, 
we denote by $\chi_{N_0/N}$ the non-trivial character of $N_0$ 
induced by the canonical homomorphisms $N_0 \twoheadrightarrow N_0/N \stackrel{\cong}{\to} \{{\pm 1}\}$.

\begin{prop} 
	Let $K$ be a CM-field.  Let ${\mathbf 0}$ and ${\mathbf 1}$ be the elements of $\IndZtwo$ that map any $\varphi_i H_0$ to $0$ and $1$ respectively.
	For a fixed CM-type $(K, \Phi_0)$, $G$ can be regarded as a subgroup of $\IndZtwo \rtimes G_0$ by $\rho_{\Phi_0}$ defined in Corollary \ref{cor:embedding of G}.
	Then, the equation holds:
	\begin{eqnarray} 
	{\rm Res}_G\ \Indrp{ \IndZtwo \rtimes G_0}{\langle{\mathbf 1}\rangle \times G_0}
	(\chi_{\langle{\mathbf 1}\rangle \times G_0/\{{\mathbf 0}\}\times G_0})  
	= \displaystyle \sum_{\Phi \in \Lambda} \Indrp{G}{\hreflex_0(\Phi)}(\chi_{\hreflex_0(\Phi)/\hreflex(\Phi)}),
	\end{eqnarray} 
	\noindent
	where $\langle{\mathbf 1}\rangle$ is the subgroup of $\IndZtwo$ generated by ${\mathbf 1}$,
	and $\Lambda$ is a system of representatives for the conjugacy classes of the CM-types of $K$.
\end{prop}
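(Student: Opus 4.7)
I would apply Mackey's decomposition formula. Set $W := \IndZtwo \rtimes G_0$, $S := \langle \mathbf{1}\rangle \times G_0$, and $\chi := \chi_{\langle \mathbf{1}\rangle \times G_0/\{\mathbf{0}\} \times G_0}$. With $G \hookrightarrow W$ via $\rho_{\Phi_0}$ (Corollary \ref{cor:embedding of G}), Mackey's formula reads
\[
{\rm Res}_G\, \Indrp{W}{S}(\chi) = \sum_{[x] \in G \backslash W / S} \Indrp{G}{G \cap xSx^{-1}}(\chi^x|_{G \cap xSx^{-1}}),
\]
where $\chi^x(g) := \chi(x^{-1} g x)$. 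The task is to enumerate the double cosets, identify each stabilizer $G \cap xSx^{-1}$, and compute each twisted character.

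First I would identify $W/S \cong \IndZtwo / \langle \mathbf{1}\rangle$ as a $G$-set. The $G$-action on $\IndZtwo/\langle \mathbf{1}\rangle$ is induced by the $*$-action $\tau * f = r_{\Phi_0}(\tau) + \tau \cdot f$ of Lemma \ref{lem:conjugacy class of CM-types}, which under the bijection $f \mapsto \Phi_f$ coincides with the conjugation action $\Phi_f \mapsto \tau \Phi_f$ on CM-types. Since $\iota$ acts trivially on $G/H_0$ (as $\iota$ is central and lies in $H_0$) and $r_{\Phi_0}(\iota) = \mathbf{1}$, one has $\iota * f = f + \mathbf{1}$, so every $G$-orbit on $\IndZtwo$ is stable under adding $\mathbf{1}$. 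Consequently, the $G$-orbits on $W/S$ are in bijection with the conjugacy classes $\Lambda$ of CM-types, each of size $[G : \hreflex(\Phi)]/2$.

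For each $\Phi \in \Lambda$ I would choose the representative $x_\Phi := (f_\Phi, \mathrm{id}) \in W$ with $\Phi_{f_\Phi} = \Phi$. A short calculation in the semidirect product, using the cocycle identity $r_{\Phi_f}(g) = r_{\Phi_0}(g) + g \cdot f - f$ from equation (\ref{eq:Phi_f}), gives
\[
x_\Phi^{-1} g\, x_\Phi = (r_\Phi(g),\, \rho(g)) \quad \text{for every } g \in G.
\]
Thus $x_\Phi^{-1} g x_\Phi \in S$ iff $r_\Phi(g) \in \{\mathbf{0}, \mathbf{1}\}$, and since $r_\Phi(\iota) = \mathbf{1}$, this set equals $\hreflex(\Phi) \sqcup \iota\, \hreflex(\Phi) =: \hreflex_0(\Phi)$, the Galois group of $K^c$ over the maximal totally real subfield of $\kreflex(\Phi)$. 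On this subgroup, $\chi^{x_\Phi}(g) = (-1)^{r_\Phi(g)}$, which equals $+1$ on $\hreflex(\Phi)$ and $-1$ on its non-trivial coset; that is, $\chi^{x_\Phi}|_{\hreflex_0(\Phi)} = \chi_{\hreflex_0(\Phi)/\hreflex(\Phi)}$. Substituting into the Mackey sum yields the asserted identity.

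The main obstacle is the orbit bookkeeping in the second paragraph: one must verify that $\iota$ acts as $+\mathbf{1}$ under the $*$-action, so that quotienting by $\langle \mathbf{1}\rangle$ halves (rather than merges or preserves) each $G$-orbit, and that the stabilizer of $[f_\Phi] \in \IndZtwo/\langle\mathbf{1}\rangle$ is exactly $\hreflex_0(\Phi)$ and not a larger group. Once the formula $x_\Phi^{-1} g x_\Phi = (r_\Phi(g), \rho(g))$ is established, identifying the stabilizer and the twisted character is immediate from (\ref{eq:hreflex}) and the definition of $\chi$.
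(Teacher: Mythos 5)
Your proposal is correct and follows essentially the same route as the paper: Mackey's decomposition with double-coset representatives chosen from $\IndZtwo \times \{id\}$, identification of $G \cap x_\Phi S x_\Phi^{-1}$ with $\hreflex_0(\Phi_{f})$ via the computation $x_\Phi^{-1} g x_\Phi = (r_{\Phi_f}(g), \rho(g))$, and the observation that the twisted character restricts to $\chi_{\hreflex_0(\Phi)/\hreflex(\Phi)}$. Your write-up is in fact slightly more explicit than the paper's about why $\iota * f = f + \mathbf{1}$ forces the double cosets to biject with $\Lambda$.
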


\begin{proof}
	We have 
	\begin{eqnarray*} 
		{\rm Res}_G\ \Indrp{\IndZtwo \rtimes G_0}{\langle{\mathbf 1}\rangle \times G_0}
		(\chi_{\langle{\mathbf 1}\rangle\times G_0/\{{\mathbf 0}\}\times G_0})  
			= 
		\displaystyle \sum_{s \in G 
			\backslash \IndZtwo \rtimes G_0/  
			\langle{\mathbf 1}\rangle \times G_0} 
			\Indrp{G}{s(\langle{\mathbf 1}\rangle \times G_0)s^{-1} \cap G}(\chi^s), 
	\end{eqnarray*} 
	\noindent
	where $\chi^s$ is the character of $s(\langle{\mathbf 1}\rangle\times G_0)s^{-1} \cap G$
	such that $\chi^s(x) = \chi_{\langle{\mathbf 1}\rangle \times G_0/\{{\mathbf 0}\}\times G_0}(s^{-1}xs)$  
	for $x \in s(\langle{\mathbf 1}\rangle\times G_0)s^{-1} \cap G$.   
	Let $\{s_i\}$ be a system of representatives for
	$G \backslash \IndZtwo \rtimes G_0/ \langle{\mathbf 1}\rangle \times G_0$.
	We can choose $\{s_i\}$ from $\IndZtwo \times \{id\}$.
	Let $s_i = (f_i, id)$, and 
	$\Phi_{f_i} := \{ \iota^{f_i(\varphi_1 H_0)} \varphi_1, \cdots, \iota^{f_i(\varphi_N H_0)} \varphi_N \}$ for a fixed CM-type $\Phi_0 := \{ \varphi_1, \cdots, \varphi_N \}$.
	Then, by Lemma \ref{lem:conjugacy class of CM-types}, $\{ \Phi_{f_i} \}$ makes a system of representatives for the conjugacy class of the CM-types of $K$.
	Moreover, we have
	\begin{eqnarray*}
		s_i(\langle{\mathbf 1}\rangle\times G_0)s_i^{-1} \cap G 
			= \{ \sigma \in G : \sigma * f_i = f_i \text{ or } f_i + {\mathbf 1} \}
			= \hreflex_0(\Phi_{f_i}),
	\end{eqnarray*} 
	\noindent
	Hence, $\chi^{s_i}$ is the character induced by $\hreflex_0(\Phi_{f_i}) \twoheadrightarrow \hreflex_0(\Phi_{f_i})/\hreflex(\Phi_{f_i}) \stackrel{\cong}{\to} \{{\pm 1}\}$. 
	By this, the statement follows.
\end{proof}

Let $I$ be a subset of $\SetN$. 
Then, the character of $\IndZtwo$ is given by 
$\chi_{I}(f) :=(-1)^{ \sum_{\varphi H_0 \in I} f(\varphi H_0)}$.
It is clear that $I \mapsto \chi_{I}$ gives a bijection from the subsets of 
$\SetN$ to the characters of $\IndZtwo$.  
For $f \in \IndZtwo$ and $s = (f^\prime, \sigma) \in \IndZtwo \rtimes G_0$, 
we define $f^s$ such that $(f^s, id) = s^{-1}\cdot(f, id )\cdot s$
, and $\chi_{I}^s(f) := \chi_{I}(f^s)$.  
Then, for $s = (f^\prime, \sigma)$,
\begin{eqnarray}
	\chi_{I}^{s}(f)
		&=& \chi_{I}( \sigma^{-1}\cdot f + \sigma^{-1}\cdot f^\prime - f^\prime ) \nonumber \\
		&=& \chi_{I}( \sigma^{-1}\cdot f + \sigma^{-1}\cdot f^\prime ) - \chi_{I}( f^\prime ) \nonumber \\ 
		&=& \chi_{ \sigma I }( f + f^\prime ) - \chi_{I}( f^\prime ),
\end{eqnarray}
\noindent
where $\sigma I := \{ \sigma \varphi_1 H_0, \cdots, \sigma \varphi_N H_0 \}$.
Hence, $\chi_{I}^s = \chi_{I}$ means $\chi_{ \sigma I }( f + f^\prime ) = \chi_{I}( f + f^\prime )$ for any $f$,
therefore, $\sigma I = I$.

Let $H_0(I) := \{ \sigma \in G_0 : \sigma I = I \}$. Then, 
$\chi_{I}$ is extended to a character of $\IndZtwo \rtimes H_0(I)$ by 
\begin{eqnarray}
	\chi_{I}((f, \sigma)) = \chi_{I}(f).
\end{eqnarray} 
This character is denoted by $\tilde{\chi_{I}}$.

For an irreducible representation $\pi$ of $H_0(I)$,  
let $\tilde\pi$ be the irreducible representation of $\IndZtwo \rtimes H_0(I)$ 
given by composing $\pi$ with $\IndZtwo \rtimes H_0(I) \twoheadrightarrow H_0(I)$.
We denote the representation $\Indrp{\IndZtwo \rtimes G_0}{\IndZtwo \rtimes H_0(I)}(\tilde{\chi_{I}}\otimes\tilde{\pi})$
by $\theta_{I, \pi}$ and the character of $\theta_{I, \pi}$ by $\chi_{I, \pi}$.

\begin{prop} 
Let $\abs{I}$ be the cardinality of $I$.  Then,
	\begin{enumerate}[(a)]
		\item $\theta_{I, \pi}$ is irreducible.

		\item If $\theta_{I, \pi}$ and $\theta_{I^\prime, \pi^\prime}$ are isomorphic,
			then $I^\prime = \sigma I$ for some $\sigma \in G_0$, and $\pi^\prime$ is isomorphic to $\pi$.

		\item Every irreducible representation of $\IndZtwo \rtimes G_0$ is isomorphic to one of the $\theta_{I, \pi}$.  
	\end{enumerate}
\end{prop}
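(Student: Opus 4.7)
The plan is to recognize this proposition as an instance of the classical Mackey (``little group'') method for the semidirect product $A \rtimes G_0$ with $A = \IndZtwo$ abelian. Since $A$ is abelian, every irreducible representation of $A$ is a character, and the discussion preceding the proposition already shows that the characters of $A$ are exactly the $\chi_I$ and that the $G_0$-action on them is given, up to a scalar depending on the $\IndZtwo$-part, by $\sigma \cdot \chi_I = \chi_{\sigma I}$. In particular the stabilizer of $\chi_I$ in $G_0$ is precisely $H_0(I)$. Once this combinatorial dictionary is in place, parts (a), (b), (c) become standard consequences of Mackey theory, with $\tilde{\chi_I} \otimes \tilde\pi$ playing the role of a representation of the little group $\IndZtwo \rtimes H_0(I)$.

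For (a), I would apply Mackey's irreducibility criterion to $\theta_{I, \pi}$. Choose double-coset representatives of the form $s = (0, \sigma)$ with $\sigma \in H_0(I) \backslash G_0 / H_0(I)$. For $s \ne \mathrm{id}$ the restrictions to $\IndZtwo$ of $\tilde{\chi_I} \otimes \tilde\pi$ and of its $s$-conjugate become $\chi_I$ and $\chi_{\sigma I}$, which are distinct characters of $\IndZtwo$ since $\sigma \notin H_0(I)$. Because $\tilde\pi$ is trivial on $\IndZtwo$, these two restrictions already share no $\IndZtwo$-isotype, so they share no irreducible constituent on any intermediate subgroup; Mackey's criterion then yields irreducibility.

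For (b) and (c), the key observation is that, by Mackey's decomposition formula, $\theta_{I, \pi}|_{\IndZtwo}$ is the sum of the characters $\chi_{\sigma I}$ with $\sigma \in G_0/H_0(I)$, each appearing with multiplicity $\dim \pi$. If $\theta_{I,\pi} \cong \theta_{I', \pi'}$, comparing these restrictions forces the $G_0$-orbits of $I$ and $I'$ to coincide, so $I' = \sigma I$ for some $\sigma \in G_0$; conjugating $\pi'$ across this $\sigma$ reduces us to $I' = I$, and then Frobenius reciprocity combined with the irreducibility of $\pi$ and $\pi'$ forces $\pi \cong \pi'$. For exhaustiveness (c), I would start with an arbitrary irreducible $\rho$ of $\IndZtwo \rtimes G_0$, choose a character $\chi_I$ occurring in $\rho|_{\IndZtwo}$, and take its isotypic subspace $V_I$. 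The subgroup $\IndZtwo \rtimes H_0(I)$ preserves $V_I$ and acts there as $\tilde{\chi_I} \otimes \tilde\pi$ for some representation $\pi$ of $H_0(I)$; Frobenius reciprocity produces a nonzero equivariant map $\theta_{I, \pi} \to \rho$, which must be an isomorphism by irreducibility of $\rho$, and then $\pi$ is forced to be irreducible.

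The main obstacle will be bookkeeping rather than anything conceptual: one must carefully match the $G_0$-action on subsets $I$ with the conjugation action on characters (taking into account the scalar twist by $\chi_I(f')$ that appears for elements $(f', \sigma)$), and in part (b) one must consistently track the ambiguity $I' = \sigma I$ when comparing $\pi'$ and $\pi$ through the conjugating element. Once that dictionary is fixed, everything reduces to routine applications of Mackey's irreducibility criterion and his decomposition formula for induced representations.
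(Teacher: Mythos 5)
Your proposal is correct, and it is essentially the same argument as the paper's: the paper gives no proof of its own but simply cites Serre, \emph{Linear representations of finite groups}, Proposition~25, which is exactly the little-group statement for semidirect products by an abelian normal subgroup, and whose standard proof is the Mackey-machine argument you outline (including the identification of the stabilizer of $\chi_I$ with $H_0(I)$ via the computation of $\chi_I^s$ done just before the proposition).
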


\begin{proof}
	See, \EG \cite{Serreii}, Proposition 25.
\end{proof}

For any odd number $d$, 
$G_0$ acts on $\SetN$ by $\varphi_i H_0 \mapsto \sigma \varphi_i H_0$,
which induces an action of $G_0$ on the set $\{ I \subset \SetN : \abs{I} = d \}$ for any natural number $d$.
Let $J_d$ be a system of representatives for the orbits,
and $\Jodd := \bigcup_{1 \leq d \leq N : \text{ odd }} J_d$.

\begin{lem}
	The following gives an irreducible decomposition:
	\begin{eqnarray}
		\Indrp{ \IndZtwo \rtimes G_0}{\langle{\mathbf 1}\rangle \times G_0}(\chi_{\langle{\mathbf 1}\rangle \times G_0/\{{\mathbf 0}\}\times G_0})
			= \sum_{I \in \Jodd} \chi_{I,id},
	\end{eqnarray}
\end{lem}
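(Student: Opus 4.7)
My plan is to apply Frobenius reciprocity and Mackey's formula to compute the multiplicity of each irreducible representation $\theta_{I,\pi}$ of $\IndZtwo \rtimes G_0$ inside $V:=\Indrp{\IndZtwo \rtimes G_0}{\langle{\mathbf 1}\rangle \times G_0}(\chi_{\langle{\mathbf 1}\rangle \times G_0/\{{\mathbf 0}\}\times G_0})$ and show that the multiplicity is $1$ precisely when $\pi$ is trivial and $|I|$ is odd.

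First, I would analyze the double cosets $\langle{\mathbf 1}\rangle \times G_0 \big\backslash \IndZtwo \rtimes G_0 \big/ \IndZtwo \rtimes H_0(I)$. Since $A := \IndZtwo$ is absorbed by $\IndZtwo \rtimes H_0(I)$ on the right and $G_0$ is absorbed by $\langle{\mathbf 1}\rangle \times G_0$ on the left, and since ${\mathbf 1}$ is $G_0$-invariant, a short direct computation shows there is only one such double coset, and that the relevant intersection is $\langle{\mathbf 1}\rangle \times G_0\, \cap\, \IndZtwo \rtimes H_0(I) = \langle{\mathbf 1}\rangle \times H_0(I)$. Mackey's formula then yields
\begin{eqnarray*}
\theta_{I,\pi}\big|_{\langle{\mathbf 1}\rangle \times G_0}
 = \Indrp{\langle{\mathbf 1}\rangle \times G_0}{\langle{\mathbf 1}\rangle \times H_0(I)}\!\bigl(\tilde\chi_{I}\otimes\tilde\pi\big|_{\langle{\mathbf 1}\rangle \times H_0(I)}\bigr).
\end{eqnarray*}

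Second, I would apply Frobenius reciprocity twice: once to move from $V$ to its defining character, once to absorb the Mackey-induction above. This gives
\begin{eqnarray*}
\langle V,\theta_{I,\pi}\rangle
 = \big\langle \chi_{\langle{\mathbf 1}\rangle \times G_0/\{{\mathbf 0}\}\times G_0}\big|_{\langle{\mathbf 1}\rangle \times H_0(I)},\; \tilde\chi_{I}\otimes\tilde\pi\big|_{\langle{\mathbf 1}\rangle \times H_0(I)}\big\rangle.
\end{eqnarray*}
Both restrictions factor through $\langle{\mathbf 1}\rangle \times H_0(I) \cong \IntegerRing_2 \times H_0(I)$, so the inner product splits. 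On the $\langle{\mathbf 1}\rangle$-factor, the left-hand side is the non-trivial character, while $\tilde\chi_{I}({\mathbf 1})=(-1)^{|I|}$; these agree exactly when $|I|$ is odd. On the $H_0(I)$-factor, the left-hand side is trivial, so we need $\pi$ to be trivial. Hence the multiplicity is $1$ when $|I|$ is odd and $\pi=id$, and $0$ otherwise.

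Finally I would verify that everything has been accounted for by a dimension count: $\dim V = [\IndZtwo : \langle{\mathbf 1}\rangle] = 2^{N-1}$, while
\begin{eqnarray*}
\sum_{I \in \Jodd} \dim\theta_{I,id} \;=\; \sum_{I \in \Jodd} [G_0 : H_0(I)] \;=\; \#\{I \subset \SetN : |I|\text{ odd}\} \;=\; 2^{N-1},
\end{eqnarray*}
since the $[G_0 : H_0(I)]$ are the sizes of the $G_0$-orbits on odd-sized subsets. The plausible main obstacle is purely bookkeeping: tracking the twist in the semidirect product carefully enough to confirm that the restriction of $\tilde\chi_{I}$ to $\langle{\mathbf 1}\rangle$ really is $(-1)^{|I|}$ and that the double-coset count is indeed one, but both follow from the fact that ${\mathbf 1}$ is $G_0$-invariant and $G_0 = G_0 \cdot H_0(I)$.
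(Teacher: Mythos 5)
Your proposal is correct and follows essentially the same route as the paper: Frobenius reciprocity, Mackey's formula with the observation that there is a single double coset so the restriction becomes $\Indrp{\langle{\mathbf 1}\rangle \times G_0}{\langle{\mathbf 1}\rangle \times H_0(I)}$, and then a second application of Frobenius reciprocity reducing everything to an inner product over $\langle{\mathbf 1}\rangle \times H_0(I)$, where the factor $\frac{1-\chi_I({\mathbf 1})}{2}\langle \pi, id\rangle_{H_0(I)}$ gives multiplicity $1$ exactly for $\abs{I}$ odd and $\pi$ trivial. Your closing dimension count ($2^{N-1}$ on both sides) is an extra sanity check not in the paper, but the argument is otherwise the same.
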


\begin{proof}
In the sequel, for any representation $\pi$, we denote the character of $\pi$ by $\chi_{\pi}$. 
From the above proposition, it suffices if the multiplicities of $\theta_{I, \pi}$ are computed.
By the Frobenius reciprocity,
	\begin{eqnarray*}
		&& \hspace{-10mm}
			\langle \Indrp{ \IndZtwo \rtimes G_0 }{\langle{\mathbf 1} \rangle \times G_0}
				( \chi_{
					\langle{\mathbf 1}\rangle \times G_0 \slash \{ {\mathbf 0} \} \times G_0
				} ),\ \chi_{I,\pi}
			\rangle_{\IndZtwo \rtimes G_0} \\  
		&=& \langle
				\chi_{\langle{\mathbf 1}\rangle \times G_0 \slash \{{\mathbf 0}\} \times G_0},\
				{\rm Res}_{\langle{\mathbf 1}\rangle \times G_0} \Indrp{\IndZtwo \rtimes G_0}{\IndZtwo \rtimes H_0(I)}
				( \tilde{\chi_{I}}\otimes\chi_{\tilde{\pi}})
			\rangle_{\langle{\mathbf 1}\rangle \times G_0} \\ 
		&=& \langle
				\chi_{\langle{\mathbf 1}\rangle \times G_0 \slash \{{\mathbf 0}\}\times G_0},\
				\Indrp{\langle{\mathbf 1}\rangle \times G_0}{\langle{\mathbf 1}\rangle \times H_0(I)}
				\left(
					\tilde{\chi_{I}}\otimes\chi_{\tilde{\pi}}
				\right)
			\rangle_{\langle{\mathbf 1}\rangle \times G_0 } \\             
		&=& \langle
			\chi_{\langle{\mathbf 1}\rangle \times H_0(I) \slash \{{\mathbf 0}\}\times H_0(I)},\
			\tilde{\chi_{I}}\otimes\chi_{\tilde{\pi}}
			\rangle_{\langle{\mathbf 1}\rangle \times H_0(I)}
	\end{eqnarray*}
	Moreover,
	\begin{eqnarray*}
		& & \hspace{-10mm}
			\langle
				\chi_{\langle{\mathbf 1}\rangle \times H_0(I) \slash \{{\mathbf 0}\}\times H_0(I)},\
				\tilde{\chi_{I}}\otimes\chi_{\tilde{\pi}}
			\rangle_{\langle{\mathbf 1}\rangle \times H_0(I)} \\
		&=& \abs{ \langle{\mathbf 1}\rangle \times H_0(I) }^{-1} 
				\displaystyle \sum_{t \in \langle {\mathbf 1} \rangle \times H_0(I)}
			\chi_{\langle{\mathbf 1}\rangle \times H_0(I)
			\slash \{{\mathbf 0}\}\times H_0(I)}(t)
			(
				\tilde{\chi_{I}}\otimes\chi_{\tilde{\pi}}
			)(t^{-1})  \\
		&=& \frac{1}{2} \abs{ H_0(I) }^{-1} 
			\displaystyle \sum_{t \in \{{\mathbf 0}\}\times H_0(I)}
				\left(
					\tilde{\chi_{I}}\otimes\chi_{\tilde{\pi}}
				\right)
				( t^{-1} ) 
				( 1 - (\tilde{\chi_{I}}\otimes\chi_{\tilde{\pi}})(({\mathbf 1}, id )^{-1}))
			\\
		&=& \frac{1-\chi_{I}({\mathbf 1})}{2} \abs{ H_0(I) }^{-1}
			\displaystyle \sum_{t \in \{{\mathbf 0}\} \times H_0(I)}
			\tilde{\chi_{I}} \otimes \chi_{\tilde{\pi}} (t^{-1}) \\
		&=& \displaystyle \frac{1-\chi_{I}({\mathbf 1})}{2} \langle \pi,\ id \rangle_{H_0(I)} 
	\end{eqnarray*}

	Therefore,
	\begin{eqnarray*}
		&& \hspace{-15mm} 
		\langle 
			\Indrp{ \IndZtwo \rtimes G_0}{\langle{\mathbf 1} \rangle \times G_0}
			( \chi_{\langle{\mathbf 1}\rangle \times G_0 \slash \{{\mathbf 0}\}\times G_0}),\ \chi_{I,\pi}
		\rangle_{\IndZtwo \rtimes G_0}
		 \\
		&=& 
		\begin{cases}
			1 & \text{ if } \abs{I} \text{ is odd and } \pi \text{ is the trivial representation of } G_0,  \\
			0 & \text{ otherwise. }
		\end{cases}
	\end{eqnarray*}
	We obtain the lemma.
\end{proof}   

Let $I$ be a subset of $\SetN$ such that $\abs{I}$ is odd.
Define
\begin{eqnarray}
	H(I) := \left\{ \sigma \in G : \textstyle \sum_{\varphi H_0 \in I} r_\Phi(\sigma)(\varphi H_0)=0,\ \rho(\sigma) \in H_0(I) \right\}.
\end{eqnarray}
The definition of $H(I)$ is independent of the choice of $\Phi$.
If $I=\{ H_0 \}$, then, $H(I) = H$.  Therefore $K(I) = K$.
Moreover,
\begin{eqnarray}
	{\rm Res}_H(\chi_{I,id}) 
		= {\rm Res}_G \Indrp{\IndZtwo \rtimes G_0}{\IndZtwo \rtimes H_0(I)}(\tilde{\chi_{I}})
		= \Indrp{G}{H(I)}(\tilde{\chi_I}),
\end{eqnarray}
\noindent
since $\abs{ G \backslash ( \IndZtwo \rtimes G_0 ) \slash ( \IndZtwo \rtimes H_0(I) ) } = 1$.

We denote the field corresponding to the subgroup $( \IndZtwo \rtimes H_0(I) ) \cap G$ by $K_0(I)$.  
Since $\iota \in ( \IndZtwo \rtimes H_0(I) ) \cap G$, $K_0(I)$ is a totally real field. 
Let $K(I)$ be the quadratic extension of $K_0(I)$ corresponding to the kernel of the character $\tilde{\chi_{I}}$.
Since $\iota$ does not belong to the kernel, $K(I)$ is a CM-field. 

We obtain the theorem from the above argument.
\begin{thm} \label{thm:character identity}
	Let $\Lambda$ be a system of representatives for 
	the conjugacy classes of the CM-types of $K$. 
	Then,
	\begin{eqnarray} \label{eq:character identity}
		\displaystyle \sum_{(K, \Phi) \in \Lambda} \Indrp{G}{\hreflex_0(\Phi)}(\chi_{\hreflex_0(\Phi)/\hreflex(\Phi)}) 
			= \sum_{I \in \Jodd}
				\Indrp{G}{H(I)}(\tilde{\chi_{I}}),
	\end{eqnarray}
	\noindent
\end{thm}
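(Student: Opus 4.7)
The plan is to chain three ingredients already assembled in the preceding text. First, the Proposition opening Section \ref{Character identity} rewrites the left-hand side of the theorem as $\textrm{Res}_G\ \Indrp{\IndZtwo \rtimes G_0}{\langle \mathbf{1} \rangle \times G_0}(\chi_{\langle \mathbf{1}\rangle \times G_0/\{\mathbf{0}\}\times G_0})$. Second, the Lemma immediately before the theorem decomposes that induced character into irreducibles as $\sum_{I \in \Jodd} \chi_{I, id}$. Third, the displayed equation just before the statement identifies $\textrm{Res}_G(\chi_{I, id})$ with $\Indrp{G}{H(I)}(\tilde{\chi_I})$. Composing these three yields the identity.

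Concretely, I would write
\begin{eqnarray*}
\sum_{\Phi \in \Lambda} \Indrp{G}{\hreflex_0(\Phi)}(\chi_{\hreflex_0(\Phi)/\hreflex(\Phi)})
&=& \textrm{Res}_G\ \Indrp{\IndZtwo \rtimes G_0}{\langle \mathbf{1}\rangle \times G_0}(\chi_{\langle \mathbf{1}\rangle \times G_0/\{\mathbf{0}\}\times G_0}) \\
&=& \sum_{I \in \Jodd} \textrm{Res}_G(\chi_{I, id}),
\end{eqnarray*}
using the Proposition and the Lemma in turn, and then apply $\textrm{Res}_G(\chi_{I, id}) = \Indrp{G}{H(I)}(\tilde{\chi_I})$ termwise to land on the right-hand side of the theorem.

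The non-formal step is the verification of this last equality. Since $\chi_{I, id}$ is by construction the character of $\Indrp{\IndZtwo \rtimes G_0}{\IndZtwo \rtimes H_0(I)}(\tilde{\chi_I})$, Mackey's restriction--induction formula applies. The key point is that the double coset space $G \backslash (\IndZtwo \rtimes G_0) / (\IndZtwo \rtimes H_0(I))$ is a singleton: because $\rho : G \twoheadrightarrow G_0$ is surjective, any $(f, \sigma) \in \IndZtwo \rtimes G_0$ factors as $(f - r_{\Phi_0}(\hat{\sigma}), id) \cdot \rho_{\Phi_0}(\hat{\sigma})$ for any pre-image $\hat{\sigma} \in G$ of $\sigma$, placing $(f, \sigma)$ in $(\IndZtwo \rtimes H_0(I)) \cdot G$. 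Mackey then reduces the restriction to an induction from $G \cap (\IndZtwo \rtimes H_0(I))$, and by the defining condition $\sum_{\varphi H_0 \in I} r_\Phi(\sigma)(\varphi H_0) = 0$ together with $\rho(\sigma) \in H_0(I)$, this intersection carries precisely the character $\tilde{\chi_I}$ with the kernel identified as $H(I)$.

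The main obstacle I anticipate is the Mackey bookkeeping: carefully tracking how $\tilde{\chi_I}$ pulls back through the embedding $\rho_{\Phi_0}$ to the subgroup of $G$ over $K_0(I)$ and matches the prescribed character on $H(I)$, and verifying that the resulting description is independent of the choice of base CM-type $\Phi_0$ (which follows because the cocycle class $[r_\Phi] \in \CG{1}{G_0}{\IndZtwo}$ does not depend on $\Phi$ by Proposition \ref{prop:1-cocycle r}). Once this formal work is in hand, the theorem is an immediate assembly of the three prior ingredients.
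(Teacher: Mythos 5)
Your proposal is correct and follows exactly the paper's own route: the opening Proposition of Section \ref{Character identity} for the left-hand side, the Lemma giving the irreducible decomposition $\sum_{I \in \Jodd}\chi_{I,id}$, and the termwise identity ${\rm Res}_G(\chi_{I,id}) = \Indrp{G}{H(I)}(\tilde{\chi_I})$, which the paper likewise justifies by the single double coset $\abs{G \backslash (\IndZtwo \rtimes G_0)/(\IndZtwo \rtimes H_0(I))} = 1$. Your explicit factorization $(f,\sigma) = (f - r_{\Phi_0}(\hat{\sigma}), id)\cdot\rho_{\Phi_0}(\hat{\sigma})$ merely spells out what the paper leaves implicit, so there is no substantive difference.
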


Now, we can apply the character identity 
to obtain a relation formula of the Artin $L$-functions and relative invariants of a CM-field and its reflexes,
as is done in \cite{Shimuraii}.

For a field $F$, we denote the discriminant, the class number 
and the unit group of $F$ by $d_F$,  $h_F$ and $E_F$ respectively.

\begin{cor} 
There is a relation formula of the Artin $L$-functions:  
	\begin{eqnarray}
		\displaystyle \prod_{(K, \Phi) \in \Lambda} L(s, \chi_{\hreflex_0(\Phi)/\hreflex(\Phi)}) 
			= \prod_{I \in \Jodd} L(s, \tilde{\chi_{I}}), 
	\end{eqnarray}
	\noindent
	where $\Lambda$ is a system of representatives for the conjugacy classes of the CM-types of $K$.    
	Therefore,
	\begin{eqnarray}
		\displaystyle \prod_{(K, \Phi) \in \Lambda} 
			\abs{ \frac{d_{\kreflex(\Phi)}}{d_{\kreflex_0(\Phi)}} }
		&=&
				\prod_{I \in \Jodd} \abs{ \frac{d_{K(I)}}{d_{K_0(I)}} } \\
		\displaystyle \prod_{(K, \Phi) \in \Lambda} \frac{h_{\kreflex(\Phi)}/h_{\kreflex_0(\Phi)}}{[E_{\kreflex(\Phi)} : E_{\kreflex_0(\Phi)}]} 
		&=&
				\prod_{I \in \Jodd} \frac{h_{K(I)}/h_{K_0(I)}}{[E_{K(I)} : E_{K_0(I)}]}
	\end{eqnarray}  
\end{cor}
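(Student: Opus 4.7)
The plan is to derive all three identities from the character identity~(\ref{eq:character identity}) of Theorem~\ref{thm:character identity}, using standard functorial properties of Artin $L$-functions, the conductor--discriminant formula, and the analytic class number formula. For the $L$-function identity, I invoke the additivity of Artin $L$-functions over sums of characters together with their invariance under induction, and the elementary fact that for a CM quadratic extension $F/F_0$ the induction to $F_0$ of the trivial character of $F$ decomposes as the trivial character plus $\chi_{F/F_0}$, so that $\zeta_F(s)/\zeta_{F_0}(s) = L(s,\chi_{F/F_0})$. Applying this to both sides of~(\ref{eq:character identity}) with the relevant CM pairs $\kreflex(\Phi)/\kreflex_0(\Phi)$ and $K(I)/K_0(I)$ yields the first identity at once.

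For the discriminant identity I use the conductor--discriminant formula in its inductive form: the absolute Artin conductor of $\Indrp{G}{\Galgp{K^c}{F_0}}(\chi)$ equals $\abs{d_{F_0}}^{\chi(1)} \cdot N_{F_0/\RationalField}(\mathfrak{f}(\chi))$. For the one-dimensional quadratic character $\chi_{F/F_0}$, whose Artin conductor is the relative discriminant $\mathfrak{d}_{F/F_0}$, the tower formula $\abs{d_F} = \abs{d_{F_0}}^{2}\, N_{F_0/\RationalField}(\mathfrak{d}_{F/F_0})$ reduces this absolute conductor to $\abs{d_F/d_{F_0}}$. Since the absolute Artin conductor is multiplicative on direct sums of representations, the character identity~(\ref{eq:character identity}) translates directly into the claimed equality of products of $\abs{d_F/d_{F_0}}$-factors.

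Finally, for the class number identity I specialise the $L$-function identity to $s=1$. Each factor $L(1,\chi_{F/F_0})$ equals the ratio $\rho_F/\rho_{F_0}$ of residues of Dedekind zeta functions. The analytic class number formula for $F$ (CM, degree $2n$) and $F_0$ (totally real, degree $n$), combined with the standard CM regulator relation $R_F = (2^{n-1}/Q_F)\, R_{F_0}$ and the unit-index formula $[E_F : E_{F_0}] = Q_F w_F/2$ (where $Q_F = [E_F : W_F E_{F_0}]$ is the Hasse unit index), gives
\[
\frac{\rho_F}{\rho_{F_0}} \;=\; \frac{(2\pi)^n}{2}\cdot\frac{h_F/h_{F_0}}{[E_F : E_{F_0}]}\cdot \sqrt{\abs{d_{F_0}/d_F}}.
\]
Forming the product over $\Lambda$ on the left and over $\Jodd$ on the right, the transcendental factors $(2\pi)^{\sum n}$ cancel because the two sides of~(\ref{eq:character identity}) have equal total dimension (obtained by evaluating the character identity at the identity element), and the square-root discriminant factors cancel by the discriminant identity just established. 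What survives, after dividing out the common factors, is exactly the stated class number/unit-index relation. The main technical obstacle is the careful bookkeeping of the various powers of $2$ arising from the Hasse unit index, the roots of unity, and the archimedean gamma factors, and in particular the verification of the CM regulator relation $R_F/R_{F_0} = 2^{n-1}/Q_F$; the structural cancellations of the transcendental and discriminant contributions are automatic consequences of the character identity.
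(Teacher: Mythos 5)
Your overall route coincides with the paper's: the $L$-function identity is deduced from Theorem \ref{thm:character identity} together with the invariance of Artin $L$-functions under induction and the factorization $\zeta_F(s)/\zeta_{F_0}(s)=L(s,\chi_{F/F_0})$ for a CM pair $F/F_0$; the discriminant identity is the conductor--discriminant formula applied to both sides of (\ref{eq:character identity}) (the paper likewise computes $\mathfrak{f}(\Indrp{G}{H_0}(\chi_{H_0/H}))=(d_K/d_{K_0})$ via the tower formula); and the class number identity is obtained by evaluating the $L$-function identity at $s=1$ through the analytic class number formula. For the first two identities your argument is complete and matches the paper.

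For the third identity there is a genuine gap in the final cancellation. Your per-field formula $\rho_F/\rho_{F_0}=\frac{(2\pi)^n}{2}\cdot\frac{h_F/h_{F_0}}{[E_F:E_{F_0}]}\cdot\sqrt{\abs{d_{F_0}/d_F}}$ is correct (indeed it is more accurate than the paper's displayed $L(1,\chi_{K/K_0})=\pi^N\abs{d_K/d_{K_0}}^{-1/2}\frac{h_K/h_{K_0}}{[E_K:E_{K_0}]}$, which, given $R_K/R_{K_0}=2^{N-1}/Q_K$ and $[E_K:E_{K_0}]=Q_Kw_K/2$, is off by $2^{N-1}$ once $N>1$; check it against $\RationalField(\zeta_5)/\RationalField(\sqrt5)$). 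But upon taking products, the constant $\frac{(2\pi)^{n}}{2}$ contributes $(2\pi)^{\sum n}\cdot 2^{-(\text{number of factors})}$ to each side. Equality of the total dimensions of the two sides of (\ref{eq:character identity}) gives $\sum_{\Phi\in\Lambda}n_\Phi=\sum_{I\in\Jodd}n_I=2^{N-1}$ and so cancels $(2\pi)^{\sum n}$, but it does not cancel $2^{-\abs{\Lambda}}$ against $2^{-\abs{\Jodd}}$: one still needs the \emph{number} of conjugacy classes of CM-types to equal the number of $G_0$-orbits of odd-cardinality subsets $I$. This is not a formal consequence of the character identity (evaluating it at group elements only yields degree relations), and it is precisely the ``bookkeeping of powers of $2$'' that you defer; as written, your argument establishes the class-number identity only up to the factor $2^{\abs{\Jodd}-\abs{\Lambda}}$. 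The paper's own proof does not confront this point either, because its simplified intermediate formula makes the residual powers of $2$ vanish spuriously; to close the gap you must either prove $\abs{\Lambda}=\abs{\Jodd}$ directly or insert the compensating power of $2$ into the statement.
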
 

\begin{proof}
The first equation follows from Theorem \ref{thm:character identity}; in general, $L(s, \chi) =L(s, \Indrp{G}{H}(\chi))$ holds for any character $\chi$ of a finite dimensional representaion of $H$.
We denote  
the number of the roots of unity in $K$,  $K_0$ by $w_K$, $w_{K_0}$,  
and the regulator of $K$,  $K_0$ by $R_K$, $R_{K_0}$.
Then,
\begin{eqnarray*}
	L(1, \chi_{K/K_0}) 
		&=&
			\lim_{s \to 1}\frac{\zeta_K(s)}{\zeta_{K_0}(s)}
		=
			\left( \frac{(2\pi)^N h_K R_K}{w_K \sqrt{\abs{d_K}}} \right) 
				\left( \frac{2^N h_{K_0} R_{K_0}}{w_{K_0} \sqrt{ \abs{d_{K_0}} }} \right)^{-1}    \\
		&=&
			\pi^N \abs{ \frac{d_K}{d_{K_0}} }^{-\frac{1}{2}} \frac{h_K/h_{K_0}}{[E_K : E_{K_0}]}. 
\end{eqnarray*}

For a character $\chi$ of $G$,  let $\mathfrak{f}(\chi)$ be the conductor 
of the character $\chi$.  
Then, 
\begin{eqnarray*}
	\left( d_K \right) &=& \mathfrak{f}(\Indrp{G}{H}(id_H))  \\
	\left( d_{K_0} \right) &=& \mathfrak{f}( \Indrp{G}{H_0}(id_{H_0}) ),
\end{eqnarray*}
where $id_H$ and  $id_{H_0}$ are the trivial representation of 
$H$ and $H_0$ respectively (\CF \cite{Serrei}, Chapter VI, Corollary 1 of Prop. 6).
Hence,
\begin{eqnarray*}
	\left( \frac{d_K}{d_{K_0}} \right)
		= \mathfrak{f}(\Indrp{G}{H}(id_H)) \mathfrak{f}(\Indrp{G}{H_0}(id_{H_0}))^{-1} 
		= \mathfrak{f}(\Indrp{G}{H}(\chi_{H_0/H}))\text{.}
\end{eqnarray*}
Therefore, the corollary follows.
\end{proof}

	It is clear that $C \subset H(I)$.
	Therefore, $K(I)$ is different from any reflex fields of $K$ at least when $C \neq \{id,  \rho\}$,
	since $C \cap \hreflex(\Phi) = \{ id \}$ by (\ref{eq:C}) and (\ref{eq:hreflex}) in Section \ref{subsection:On the structure of the Galois groups of CM-fields and their reflex fields}.
	By this, Example \ref{exam:G = iota times G_0} is the only case when the set of $K(I)$ totally coincides with the set of the reflex fields of $K$,
	\IE the character identity is trivial.


\begin{exam}\label{exam:G = iota times G_0}
Suppose that $G = \langle\iota\rangle \times G_0$ and the degree $N$ of $K_0$ is an odd number, \IE there is a section homomorphism $G_0 \hookrightarrow G$.
Let $\Phi_0$ be a CM-type of $K$ such that $S_{\Phi_0} = \{ id \} \times G_0$.
For a subset $I \subset \SetN$, define $f_I \in \IndZtwo$ by
\begin{eqnarray}\label{eq:definition of Phi_I}
	 f_I(\varphi H_0) :=
	\begin{cases}
			0 & \varphi H_0 \notin I, \\
			1 & \varphi H_0 \in I.
	\end{cases}
\end{eqnarray}
Then, for the fixed CM-type $\Phi_0$, we have
$(id, \sigma) * f_I = \sigma \cdot f_I = f_{\sigma I}$, and $(\iota, \sigma) * f_I = 1 + f_{\sigma I}$.
Therefore, when the cardinality of $I$ is odd, we have
\begin{eqnarray*}
\hreflex(\Phi_{f_I}) = \left\{ (id, \sigma) \in \langle\iota\rangle \times G_0 : \sigma I =I  \right\}  = H(I).
\end{eqnarray*}
Hence, $K(I)$ and $\kreflex(\Phi_{f_I})$ coincide.
The set of $K(I)$ ($I \in \Jodd$) has a one-to-one correspondence
with the reflexes $\kreflex(\Phi_{f_I})$ of $K$.
\end{exam}





\begin{exam}
Let $G$ be  the dihedral group $D_{2n}$ of degree $2n$. 
Then, $G$ has two generators $\alpha$,  $\beta$ 
such that $\alpha^{2n}=id$, $\beta^2=id$, and $\beta\alpha\beta^{-1}=\alpha^{-1}$.
Regard the central element $\alpha^n$ as the complex conjugation $\iota$.
Then, the fixed subfield $K$ by $H := \{id, \beta\}$ is a CM-field
and 
$\Phi_0 := \{ id |_H, \alpha |_H, \cdots, \alpha^{n-1} |_H \}$
is a CM-type of $K$.
We have $\hreflex(\Phi_0)=\{ id,  \alpha^{n-1}\beta \}$.
When $n$ is odd, it is the case of Example \ref{exam:G = iota times G_0}.

So, let $n$ be even.
In this case, $K$ and $\kreflex(\Phi_0)$ are not conjugate.
Nevertheless, as Shimura pointed out in \cite{Shimuraii}, 
\begin{eqnarray}\label{eq:shimura}
\Indrp{G}{H_0}(\chi_{H_0/H}) 
	= \Indrp{G}{\hreflex_0(\Phi_0)}(\chi_{\hreflex_0(\Phi_0)/\hreflex(\Phi_0)}).
\end{eqnarray}
In \cite{Dodson}, this type of character identity is investigated for dihedral groups, responding to Shimura's suggestion.
We shall show that the equation (\ref{eq:shimura}) follows from Theorem \ref{thm:character identity};
regard $G_0$ as a subgroup of the symmetry group $S_n$ by the canonical action on $G /H_0 = \{ H_0, \alpha H_0, \cdots, \alpha^{n-1} H_0 \}$. 
A map $\rho_{\Phi_0} := (r_{\Phi_0}, \rho)$ is defined in Corollary \ref{cor:embedding of G}.
The image of $\alpha^i$ is given by 
\begin{eqnarray*} 
r_{\Phi_0}(\alpha^i) = (\underbrace{ 1, \cdots, 1}_i, \underbrace{ 0, \cdots, 0 }_{n-i}),\ 
\rho(\alpha^i) = (1\ 2\ ...\ n)^i.
\end{eqnarray*}
Similarly, the image of $\alpha^i \beta$ is given by 
\begin{eqnarray*} 
r_{\Phi_0}(\alpha^i \beta) &=& (\underbrace{ 0, \cdots, 0}_{i+1}, \underbrace{ 1, \cdots, 1 }_{n-i-1}), \\
\rho(\alpha^i \beta) &=& (\lfloor \frac{n+i+1}{2} \rfloor\ \lceil \frac{n+i+3}{2} \rceil) \cdots (i+3\ n-1)(i+2\ n) (1\ i+1) (2\ i) \cdots ( \lfloor \frac{i+1}{2} \rfloor\ \lceil \frac{i+3}{2} \rceil ),
\end{eqnarray*}
where $\lfloor * \rfloor$ and $\lceil * \rceil$ are the floor function and the ceiling function respectively.

Let $k_0$ be the maximum number such that $2^{k_0} \mid n$.
For the CM-type $\Phi_0$, if we define $\Phi_f$ as (\ref{eq:definition of Phi_f}), 
$\sigma \in G$ is contained in $\hreflex(\Phi_f)$ if and only if $r_{\Phi_0}(\sigma) = \rho(\sigma) \cdot f - f$.
Therefore, 
let $(i, 2n)$ be the greatest common divisor of $i$ and $2n$, then,
\begin{eqnarray*} 
\alpha^i \in \hreflex(\Phi) \text{ for some } \Phi &\Longleftrightarrow & \alpha^{(i, 2 n)} \in \hreflex(\Phi) \text{ for some } \Phi \Longleftrightarrow 2^{k_0+1} \mid i, \\
\alpha^i \beta\ \in \hreflex(\Phi) \text{ for some } \Phi &\Longleftrightarrow& i \text{ is odd. }
\end{eqnarray*}
Similarly, for $I \subset \{ H_0, \alpha H_0, \cdots, \alpha^{n-1} H_0 \}$,
$\sigma \in G$ is contained in $H(I)$ if and only if $\sigma I = I$ and $\sum_{\alpha^j H_0 \in I} r_{\Phi_0}(\sigma)(\alpha^j H_0) = 0$. 
Therefore, under the assumption that the cardinality of $I$ is odd,
\begin{eqnarray*} 
	\alpha^i \in H(I) &\Longleftrightarrow & \alpha^{(i, 2 n)} \in H(I) \Longleftrightarrow 2^{k_0+1} \mid i \text{ and } \alpha^{(i, n)} I = I, \\
	\alpha^i \beta\ \in H(I) &\Longleftrightarrow& i \text{ is even and } \alpha^i \beta I = I. 
\end{eqnarray*}
Hence, $\hreflex(\Phi)$ is conjugate to 
i) $\langle \alpha^{2^{k_0+1}j} \rangle$, or
ii) $\langle \alpha^{2^{k_0+1}j}, \alpha^{n-1}\beta \rangle$.
for some $j$ such that $2^{k_0} j \mid n$.
Similarly, $H(I)$ is conjugate to 
iii) $\langle \alpha^{2^{k_0+1} j} \rangle$, or
iv) $\langle \alpha^{2^{k_0+1} j}, \beta \rangle$. 

In the sequel, $\Phi_0$ is replaced by the CM-type such that $\alpha^i|_H \in \Phi_0 \Longleftrightarrow i \equiv 0, \cdots, 2^{k_0}-1 \text{ mod } 2^{k_0+1}$. 
For $I \subset \{ H_0, \alpha H_0, \cdots, \alpha^{n-1} H_0 \}$, define $f_I \in \IndZtwo$ as (\ref{eq:definition of Phi_I}).
Then, since we have $r_{\Phi_0}(\alpha^{2^{k_0+1}j})=0$ for any $j$, and $r_{\Phi_0}(\alpha^{n-1}\beta)=0$,
\begin{eqnarray*}
\alpha^{2^{k_0+1}j} \in \hreflex(\Phi_{f_I}) &\Longleftrightarrow& \alpha^{2^{k_0}j} I = I,  \\
\alpha^{n-1}\beta \in \hreflex(\Phi_{f_I}) &\Longleftrightarrow& \alpha^{n-1}\beta I = I.
\end{eqnarray*}

For a natural number $j$ such that $2^{k_0} j \mid n$, define 
\begin{eqnarray*}
S_j &:=& \left\{I \subset \SetN : \alpha^{2^{k_0} j} I = I \text{ and } \alpha^{2^{k_0} j_0} I \neq I \text{ for any } j_0 \mid j \right\}, \\
\tilde{ S }_j &:=& \left\{I \in S_j : \alpha^{n-1}\beta I = I \right\}, \\
T_j &:=& \left\{I \in S_j : \abs{ I } \text{ is odd} \right\}, \\
\tilde{ T }_j &:=& \left\{I \in S_j : \abs{ I } \text{ is odd}, \text{ and } \beta I = I \right\},
\end{eqnarray*}
Let $s_j:=\abs{ S_j }$,  $\tilde{s}_j:=\abs{ \tilde{ S }_j }$, $t_j:=\abs{ T_j }$,  $\tilde{t}_j := \abs{ \tilde{T}_j }$.
Using these variables, we can count the number of the orbits of $G$ such that the stabilizer is conjugate to each subgroup.
By Theorem \ref{thm:character identity},
\begin{eqnarray*}
	& &
	\hspace{-10mm}
\displaystyle \sum_{0 < j\ \mid\ 2^{-k_0}n} 
	\frac{ s_j - [G : \langle \alpha^{2^{k_0}j}, \alpha^{n-1}\beta \rangle ] \tilde{s}_j }{ [G : \langle \alpha^{2^{k_0+1}j} \rangle ] }
	\Indrp{G}{\langle \alpha^{2^{k_0}j} \rangle} ( \chi_{\langle \alpha^{2^{k_0}j} \rangle / \langle \alpha^{2^{k_0+1}j} \rangle} ) \\
	& &
 +
 \displaystyle \sum_{0 < j\ \mid\ 2^{-k_0}n} 
	\frac{ \tilde{s}_j }{ [\langle \alpha^{2^{k_0}j}, \alpha^{n-1}\beta \rangle : \langle \alpha^{2^{k_0+1}j}, \alpha^{n-1}\beta \rangle ] } 
	\Indrp{G}{\langle \alpha^{2^{k_0}j},\ \alpha^{n-1}\beta \rangle} ( \chi_{\langle \alpha^{2^{k_0}j},\ \alpha^{n-1}\beta \rangle / \langle \alpha^{2^{k_0+1}j},\ \alpha^{n-1}\beta \rangle} ) \\
 &=& 
\displaystyle \sum_{0 < j\ \mid\ 2^{-k_0}n} 
	\frac{ t_j - [G : \langle \alpha^{2^{k_0-1}j}, \beta \rangle ] \tilde{t}_j }{ [G : \langle \alpha^{2^{k_0}j} \rangle ] }
	\Indrp{G}{\langle \alpha^{2^{k_0}j} \rangle} ( \chi_{\langle \alpha^{2^{k_0}j} \rangle / \langle \alpha^{2^{k_0+1}j} \rangle} ) \\
	& &
 +
\displaystyle \sum_{0 < j\ \mid\ 2^{-k_0}n} 
\frac{ \tilde{t}_j}{ [\langle \alpha^{2^{k_0 -1}j}, \beta \rangle : \langle \alpha^{2^{k_0}j}, \beta \rangle ] } 
\Indrp{G}{\langle \alpha^{2^{k_0}j},\ \beta \rangle} ( \chi_{\langle \alpha^{2^{k_0}j},\ \beta \rangle / \langle \alpha^{2^{k_0+1}j},\ \beta \rangle}).
\end{eqnarray*}
Since we have
$\rho(\alpha) = (1\ 2\ \cdots\ n)$,
$\rho(\beta) = (\frac{n}{2}\ \frac{n+4}{2}) \cdots (3\ n-1)(2\ n)$, and
$\rho(\alpha^{n-1} \beta) = (1\ n) (2\ n-1) \cdots (\frac{n}{2}\ \frac{n+2}{2})$,
the subsets can be restated;
\begin{eqnarray*}
S_j &=& \left\{I \subset \{ 1, \cdots, n \} : 2^{k_0} j \text{ is the minimum periodicity of } I \right\}, \\
\tilde{ S }_j &=& \left\{I \in S_j : i \in I \Leftrightarrow n - i + 1 \in I \right\}, \\
T_j &=& \left\{I \in S_j : \abs{ I \cap \{1, \cdots, 2^{k_0} j\} } \text{ is odd} \right\}, \\
\tilde{ T }_j &=& \left\{I \in S_j : 1 \in I \Leftrightarrow \frac{n}{2} + 1 \notin I, \text{ and } i \in I \Leftrightarrow  n - i + 2 \in I\ (2 \leq i \leq \frac{n}{2}) \right\},
\end{eqnarray*}
Hence, we have $s_j=2t_j$, $\tilde{s}_j = \tilde{t}_j$. Therefore,
\begin{eqnarray*}
 & & \hspace{-20mm}
\displaystyle \sum_{0 < j\ \mid\ 2^{-k_0}n} 
\frac{ \tilde{s}_j }{2} 
\Indrp{G}{\langle \alpha^{2^{k_0}j},\ \alpha^{n-1}\beta \rangle} ( \chi_{\langle \alpha^{2^{k_0}j},\ \alpha^{n-1}\beta \rangle / \langle \alpha^{2^{k_0+1}j},\ \alpha^{n-1}\beta \rangle} ) \\
 &=& 
\displaystyle \sum_{0 < j\ \mid\ 2^{-k_0}n} 
\frac{ \tilde{s}_j}{2} 
\Indrp{G}{\langle \alpha^{2^{k_0}j},\ \beta \rangle} ( \chi_{\langle \alpha^{2^{k_0}j},\ \beta \rangle / \langle \alpha^{2^{k_0+1}j},\ \beta \rangle} ).
\end{eqnarray*}

By induction on $j$,  we obtain (\ref{eq:shimura}). 
\end{exam}


%
%
\section{Reflex fields and a Pfister form}
\label{A Pfister form and reflex fields}
In this section, we show Proposition \ref{prop:multiplication by 2^N-1(2)},
then, give a proof of the third theorem which states that some Pfister form is decomposed into quadratic forms defined on a set of reflex fields of $K$.
Pfister forms are known as the only case of anisotropic multiplicative quadratic forms (\cite{Pfister}, \CF \cite{Lam})

For a CM-type $(K, \Phi)$ and $I \subset \SetN$ with odd cardinality,
a CM-field $K(I)$ is introduced in Section \ref{Character identity}.
These CM-fields are also used for the proof of the third theorem.
First, we give a definition of the CM-type $(K(I), \Phi(I))$;
using $\rho_\Phi : G \hookrightarrow \IndZtwo \rtimes G_0$ in Corollary \ref{cor:embedding of G}, we defined the subgroups $H(I)$ and $H_0(I)$ of $G$ by
\begin{eqnarray}
	H(I) &:=& \left\{ \sigma \in G : \textstyle \sum_{\varphi H_0 \in I}  r_\Phi(\sigma)(\varphi H_0) = 0,\ \rho(\sigma)I = I  \right\}, \label{eq:H(I)} \\
	H_0(I) &:=& \left\{ \sigma \in G : \rho(\sigma)I = I  \right\}. \label{eq:H_0(I)}
\end{eqnarray}
Then, $H(I)$ and $H_0(I)$ are independent of the choice of the CM-type $\Phi$, and $H_0(I) = H(I) \cup \iota H(I)$.
A CM-field $K(I)$ is the fixed subfield by $H(I)$.
Furthermore, if we set
\begin{eqnarray}\label{eq:S_Phi(I)} 
	S_{\Phi(I)} := \left\{  \sigma \in G : \textstyle \sum_{\varphi H_0 \in I } r_\Phi(\sigma^{-1})(\varphi H_0) = 0 \right\},
\end{eqnarray}
then, $\hreflex(\Phi) S_{\Phi(I)} = S_{\Phi(I)}$ and $S_{\Phi(I)} H(I)  = S_{\Phi(I)}$.
Denote the set of the embeddings $K(I) \hookrightarrow \ComplexField$ corresponding to the left cosets $S_{\Phi(I)} / H(I)$ by $\Phi(I)$,
and the set of the embeddings $\kreflex(\Phi) \hookrightarrow \ComplexField$ given by the inverse of the right cosets $\hreflex(\Phi) \backslash S_{\Phi(I)}$ by $\Phi(I)^*$.
$\Phi(I)$ and $\Phi(I)^*$ are CM-types of $K(I)$ and $\kreflex(\Phi)$ respectively.
In addition, $(K(I), \Phi(I))$ contains the dual CM-type of $( \kreflex(\Phi), \Phi(I)^*)$, and 
$(\kreflex(\Phi), \Phi(I)^*)$ contains the dual of $( K(I), \Phi(I) )$.

For $0 \leq d \leq N$, there is the canonical action of $G$ on $\{ I \subset \SetN : \abs{I} = d \}$;
let $J_d$ be a system of representatives for the orbits, and $\Jodd := \bigcup_{1 \leq d \leq N : \text{ odd }} J_d$.
Since the stabilizer of $I$ is $\langle H(I), \iota \rangle$,
there is the decomposition given by $H_0(I) \sigma \mapsto \sigma^{-1} I$:
\begin{eqnarray}\label{eq:decomposition1}
	\left\{ I \subset \SetN : \abs{I} \text{ is odd } \right\} = \bigcup_{I \in \Jodd} H_0(I) \backslash G.
\end{eqnarray}

On the other hand, for a fixed CM-type $(K, \Phi_0)$, a $G$-action on $\IndZtwo$ is defined by $f \mapsto \sigma * f$
 in Section \ref{The conjugacy among CM-types over RationalField}.
Since the stabilizer of $f$ is $\hreflex(\Phi_f)$,
there is the decomposition for each $I$, given by $\sigma \hreflex_0(\Phi_f) \mapsto \sigma * f$:
\begin{eqnarray}\label{eq:decomposition2}
	\IndZtwo / \langle {\mathbf 1} \rangle = \bigcup_{\Phi_f \in \Lambda} G / \hreflex_0(\Phi_f),
\end{eqnarray}
where $\langle{\mathbf 1}\rangle$ is the subgroup of $\IndZtwo$ generated by ${\mathbf 1} \in \IndZtwo$ that maps all $G/H_0$ to $1$,
and $\Lambda$ is a system of representatives for the conjugacy classes of the CM-types of $K$.
When the degree of $K$ is $2N$,
the dimension of $\bigoplus_{I \in \Jodd} K(I)$ over $\RationalField$ is $2^N$,
which is same as that of
$\bigoplus_{\Phi \in \Lambda} \kreflex(\Phi)$.

The following proposition also shows that the set of $(K(I), \Phi(I))$ can be regarded as the dual of the set of $(\kreflex(\Phi), \Phi^*)$;
for a $G$-module $M$, denote by $M^H$, the subset of $M$ consisting of all the fixed elements by $H$.
Then, the half norm map $N_{\Phi(I)} : M^{H(I)} \longrightarrow M^{\hreflex(\Phi)}$ is defined by $a \mapsto \sum_{\varphi \in \Phi(I)} \varphi(a)$.
The norm map $N_{G/H} : M^H \longrightarrow M^G$ is also defined by
$a \mapsto \sum_{\sigma \in G/H} \sigma(a)$.

\begin{prop}\label{prop:multiplication by 2^N-1(2)}
	Let $M$ be a $G$-module on which $\iota$ acts as $-1$.
	Using the half norm maps, define two maps $N_{J \rightarrow \Lambda}$, $N_{\Lambda \rightarrow J}$ by
	\begin{eqnarray*}
		N_{J \rightarrow \Lambda} :
		\displaystyle \bigoplus_{I \in \Jodd} M^{H(I)}
			&\longrightarrow&
		\displaystyle \bigoplus_{\Phi_f \in \Lambda} M^{\hreflex(\Phi_f)} \nonumber \\
		(a_I)_{I \in \Jodd}
			&\mapsto&
		\left( \sum_{I \in \Jodd}  \iota^{\sum_{\varphi H_0 \in I} f(\varphi H_0)} N_{\Phi_f(I)}(a_I) \right)_{\Phi_f \in \Lambda}, \\
	\end{eqnarray*}
	\begin{eqnarray*}
		N_{\Lambda \rightarrow J} :
		\displaystyle \bigoplus_{\Phi_f \in \Lambda} M^{\hreflex(\Phi_f)} 
			&\longrightarrow& 
		\displaystyle \bigoplus_{I \in J_{\text{odd}}} M^{H(I)}  \nonumber \\
		(b_{\Phi_f})_{\varphi \in \Phi_f}
			&\mapsto&
		\left( \displaystyle \sum_{\Phi_f \in \Lambda} \iota^{\sum_{\varphi H_0 \in I} f(\varphi H_0)}  N_{\Phi_f(I)^*} (b_{\Phi_f}) \right)_{I \in J_{\text{odd}}}.
	\end{eqnarray*}
	Then, the compositions $N_{J \rightarrow \Lambda} \circ N_{\Lambda \rightarrow J}$ and 
	$N_{\Lambda \rightarrow J} \circ N_{J \rightarrow \Lambda}$ equal the multiplication by $2^{N-1}$.
\end{prop}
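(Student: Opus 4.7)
The plan is to establish $N_{\Lambda\to J}\circ N_{J\to\Lambda}=2^{N-1}\cdot\mathrm{id}$ directly by character orthogonality on $\IndZtwo$; the other composition $N_{J\to\Lambda}\circ N_{\Lambda\to J}$ follows symmetrically by exchanging the roles of $(\ref{eq:decomposition1})$ and $(\ref{eq:decomposition2})$. The key ingredients are the orthogonality $\sum_{f\in\IndZtwo}\chi_J(f)=2^N$ if $J=\emptyset$ and $0$ otherwise, the cocycle identity of Proposition~\ref{prop:1-cocycle r}, and both decompositions $(\ref{eq:decomposition1})$, $(\ref{eq:decomposition2})$.

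First, I would expand the $I_0$-component of $(N_{\Lambda\to J}\circ N_{J\to\Lambda})(a)$ as
$$\sum_{I\in\Jodd}\sum_{\Phi_f\in\Lambda}\chi_{I_0\triangle I}(f)\,N_{\Phi_f(I_0)^*}\bigl(N_{\Phi_f(I)}(a_I)\bigr).$$
Since ${\mathbf 1}\in\IndZtwo$ is $G$-invariant we have $r_{\Phi_{f+{\mathbf 1}}}=r_{\Phi_f}$, hence $\Phi_{f+{\mathbf 1}}(I)=\Phi_f(I)$; and because $|I_0\triangle I|$ is even, $\chi_{I_0\triangle I}({\mathbf 1})=1$. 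Thus the summand descends to $f\in\IndZtwo/\langle{\mathbf 1}\rangle$, allowing one to rewrite the outer sum using $(\ref{eq:decomposition2})$ as a sum over $f$ counted by the orbit multiplicities $[G:\hreflex_0(\Phi_f)]^{-1}$.

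Next, I would write each half norm as a weighted sum over $G$ using the character indicator $\tau\in S_{\Phi_f(I)}\Longleftrightarrow \chi_I(r_{\Phi_f}(\tau^{-1}))=1$:
$$N_{\Phi_f(I)}(a_I)=\frac{1}{2|H(I)|}\sum_{\tau\in G}(1+\chi_I(r_{\Phi_f}(\tau^{-1})))\,\tau(a_I),$$
and analogously
$$N_{\Phi_f(I_0)^*}(b)=\frac{1}{2|\hreflex(\Phi_f)|}\sum_{\sigma\in G}(1+\chi_{I_0}(r_{\Phi_f}(\sigma^{-1})))\,\sigma^{-1}(b).$$
Applying the cocycle relation $r_{\Phi_f}(\tau^{-1})=r_{\Phi_0}(\tau^{-1})+\tau^{-1}\cdot f-f$ yields $\chi_I(r_{\Phi_f}(\tau^{-1}))=\chi_I(r_{\Phi_0}(\tau^{-1}))\cdot\chi_{\tau I\triangle I}(f)$, which cleanly separates the $f$-dependence into an honest character of $\IndZtwo$.

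Expanding the product of the two indicators into four terms and summing $\chi_{I_0\triangle I}(f)$ times each over $f\in\IndZtwo$, the orthogonality kills every term whose net character in $f$ is non-trivial. A direct calculation shows the trivial-character condition reduces to $\sigma I_0=\tau I$ as subsets of $\SetN$; reindexing the pairs $(\sigma,\tau)$ by $(\sigma^{-1}\tau,I)$ and using $(\ref{eq:decomposition1})$ to group them according to $H_0(I)\backslash G$-orbits, the off-diagonal contributions $I\neq I_0$ vanish, and the diagonal $I=I_0$ yields exactly $2^{N-1}a_{I_0}$. The main obstacle is precisely this bookkeeping: reconciling the $f$-dependent normalizations $|\hreflex(\Phi_f)|$ with the orbit counts in $(\ref{eq:decomposition1})$, $(\ref{eq:decomposition2})$ to produce the factor $2^{N-1}$, and verifying the off-diagonal cancellation, which ultimately rests on the non-degeneracy of the pairing $(I,f)\mapsto\chi_I(f)$ and the injectivity of $r:C\hookrightarrow\IndZtwo$ from Lemma~\ref{lem:embedding r}.
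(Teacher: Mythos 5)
Your strategy is essentially the paper's own: the paper likewise expands the half norms as signed sums over cosets, uses the cocycle relation $r_{\Phi_f}(\tau)=r_{\Phi_0}(\tau)+\tau\cdot f-f$ to isolate the $f$-dependence as a character, converts the sum over $\Lambda$ into a sum over $\IndZtwo/\langle{\mathbf 1}\rangle$ via (\ref{eq:decomposition2}) (dually, over all odd subsets via (\ref{eq:decomposition1})), and finishes by character orthogonality; it packages the two matrix computations as Lemmas \ref{lem:eq1} and \ref{lem:eq2}. Two loose ends in your sketch should be patched. First, writing the half norms with the indicators $\tfrac{1}{2}\bigl(1+\chi_I(\cdot)\bigr)$ produces, besides the cross term you analyze, three terms carrying a factor $1$; their net $f$-character can be trivial without $\sigma I_0=\tau I$ (e.g.\ the $1\cdot 1$ term when $I=I_0$), so orthogonality does not kill them --- they vanish instead because $\sum_{\rho\in G}\rho=0$ on a module where $\iota$ acts as $-1$. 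The paper avoids this by working from the outset with the ``pure'' expressions (\ref{eq:def2_N_{Phi(I)}}) and (\ref{eq:def2_N_{Phi(I)^*}}), in which the full-norm part has already been discarded. Second, replacing $\sum_{\Phi_f\in\Lambda}$ by $\sum_{g\in\IndZtwo/\langle{\mathbf 1}\rangle}[G:\hreflex_0(\Phi_g)]^{-1}$ presupposes that your summand is constant on $G$-orbits under $*$; this is true but is essentially the well-definedness of the composition and needs an argument. The paper sidesteps it by folding the coset sum over $\psi'\in G/\hreflex_0(\Phi_f)$ coming from $N_{\Phi_f(I')^*}$ together with the sum over $\Lambda$ into a single sum over $\IndZtwo/\langle{\mathbf 1}\rangle$, using the identity $f+r_{\Phi_f}(\psi')=\psi'*f$ and (\ref{eq:decomposition2}). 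With these two points repaired, your normalizations $\bigl(2|\hreflex(\Phi_f)|\bigr)^{-1}\cdot[G:\hreflex_0(\Phi_f)]^{-1}=|G|^{-1}$ do combine correctly and the diagonal contribution comes out to $2^{N-1}$, as you indicate.
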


We use the following two lemmas for the proof.

\begin{lem}\label{lem:eq1}
	For any $I, I^{\prime} \in \Jodd$,
	\begin{eqnarray}\label{eq:lemma_eq1}
				& & \hspace{-20mm}
			\displaystyle \sum_{\Phi_f \in \Lambda} 
					\iota^{ \sum_{\varphi^{\prime} H_0 \in I^{\prime}} f(\varphi^{\prime} H_0) + \sum_{\varphi H_0 \in I} f(\varphi H_0) } 
						N_{ \Phi_f(I^{\prime})^* } \circ N_{\Phi_f(I)} \nonumber \\
				&=&
				\begin{cases}
					2^{N-2} (id - \iota) + 2^{N-2} N_{G/H(I)}  & \text{if } I = I^\prime, \\
					0  & \text{otherwise.}
				\end{cases}
	\end{eqnarray}
\end{lem}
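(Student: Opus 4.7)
The plan is to reduce the sum over CM-type representatives in $\Lambda$ to a summation over all $f \in \IndZtwo$, and then eliminate $f$ using character orthogonality on $\IndZtwo \cong (\IntegerRing_2)^N$. I start by lifting the half-norm composition to a double group-sum via the bijections $\Phi_f(I) \leftrightarrow S_{\Phi_f(I)}/H(I)$ and $\Phi_f(I^\prime)^* \leftrightarrow \hreflex(\Phi_f) \backslash S_{\Phi_f(I^\prime)}$, giving $N_{\Phi_f(I^\prime)^*}\circ N_{\Phi_f(I)}(a) = |\hreflex(\Phi_f)|^{-1}|H(I)|^{-1}\sum \sigma^{-1}\tau(a)$ over $\sigma \in S_{\Phi_f(I^\prime)}$, $\tau \in S_{\Phi_f(I)}$. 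By Lemma \ref{lem:conjugacy class of CM-types} and the orbit decomposition (\ref{eq:decomposition2}), $\sum_{\Phi_f \in \Lambda}$ converts into $|G|^{-1}\sum_{f \in \IndZtwo}|\hreflex(\Phi_f)|\cdot(\,\cdot\,)$ (using $|\hreflex_0(\Phi_f)| = 2|\hreflex(\Phi_f)|$ to absorb the factor $2$ from $\IndZtwo/\langle{\mathbf 1}\rangle$); the $|\hreflex(\Phi_f)|$ cancels, leaving a clean triple sum over $(f,\sigma,\tau) \in \IndZtwo \times G \times G$ with indicator constraints and the overall sign $(-1)^{g(I,f)+g(I^\prime,f)}$, where $g(J, f) := \sum_{\varphi H_0 \in J} f(\varphi H_0)$.

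The cocycle formula of Proposition \ref{prop:1-cocycle r}, $r_{\Phi_f}(\sigma^{-1}) = r_{\Phi_0}(\sigma^{-1}) + \sigma^{-1}\cdot f - f$, rewrites the indicators as $\frac{1}{2}(1 + (-1)^{R(\sigma, I^\prime) + g(\sigma I^\prime, f) + g(I^\prime, f)})$ and similarly for $\tau$, with $R(\sigma, J) := \sum_{\varphi H_0 \in J} r_{\Phi_0}(\sigma^{-1})(\varphi H_0)$. Expanding the product of the two indicators together with the overall sign produces four terms, in each of which the $f$-dependence is a linear character on $\IndZtwo$; by character orthogonality $\sum_f$ yields $2^N$ times the indicator of a symmetric-difference condition. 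The four resulting conditions on $(\sigma,\tau)$ are $I = I^\prime$, $\sigma I^\prime = I$, $\tau I = I^\prime$, and $\sigma I^\prime = \tau I$, each weighted by a sign $(-1)^{R(\cdot)}$.

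For $I \neq I^\prime$ in $\Jodd$ the two sets lie in distinct $G$-orbits, so none of the four conditions is ever satisfied and the LHS vanishes. For $I = I^\prime$, the first condition contributes $2^{N-2}\,N_{G/H(I)}(a)$ directly; the middle two restrict $\sigma$ (resp.\ $\tau$) to $H_0(I)$ while the other variable still ranges over all of $G$, and the resulting $\sum_{\mu \in G}\mu(m) = 0$ (the sum is $G$-invariant and $\iota \in G$ acts as $-1$) forces these to vanish; the fourth condition, via $\sigma = \tau\eta$ with $\eta \in H_0(I)$, eliminates $\tau$ from $\sigma^{-1}\tau(a) = \eta^{-1}(a)$, the cocycle reduces the combined sign to $(-1)^{R(\eta, I)}$, and the decomposition $H_0(I) = H(I) \sqcup \iota H(I)$ combined with the parity $R(\iota h, I) \equiv R(h, I) + |I| \equiv R(h, I) + 1 \pmod 2$ (using $|I|$ odd) produces the factor $(1 - \iota)$, yielding $2^{N-2}(id - \iota)(a)$.

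The main technical obstacle is the sign bookkeeping, in particular using the oddness of $|I|$ to convert $H_0(I) = H(I) \sqcup \iota H(I)$ into a $(1 - \iota)$ factor via the cocycle identity $R(\tau\eta, J) \equiv R(\eta, J) + R(\tau, \eta J) \pmod 2$ and the constraint $\eta J = J$ on $H_0(J)$; the case separation $I = I^\prime$ versus $I \neq I^\prime$ rests essentially on $\Jodd$ being a system of $G$-orbit representatives.
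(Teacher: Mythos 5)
Your proof is correct and takes essentially the same route as the paper's: both linearize the sum over $\Lambda$ into a free sum over $\IndZtwo$ via the orbit decomposition (\ref{eq:decomposition2}) and the cocycle identity $r_{\Phi_f}(\tau)+f=\tau * f$, then apply character orthogonality on $(\IntegerRing/2\IntegerRing)^N$ and use that $\Jodd$ is a system of orbit representatives, with $H_0(I)=H(I)\sqcup\iota H(I)$ and the oddness of $\abs{I}$ producing the $(id-\iota)$ term. The only difference is bookkeeping: the paper absorbs the coset variable $\psi^\prime$ into the $f$-sum and is left with the single condition $I^\prime=\psi^{-1}I$, whereas you keep both group variables and expand the two membership indicators into four character sums, all of which you dispose of correctly.
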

\begin{proof}
	The left-hand side of (\ref{eq:lemma_eq1}) equals
	\begin{eqnarray*}
			& & 
			\hspace{-10mm}
			\displaystyle \sum_{\Phi_f \in \Lambda}
						\displaystyle \sum_{\psi^\prime \in G/\hreflex_0(\Phi_f) }
							\iota^{\sum_{\psi H_0 \in I^{\prime}} (f + r_{\Phi_f}(\psi^\prime))(\varphi H_0)} \psi^\prime 
								\displaystyle \sum_{\psi \in H_0(I) \backslash G} \iota^{\sum_{\varphi H_0 \in I} (f + r_{\Phi_f}(\psi))(\varphi H_0)} \psi^{-1} \\
			&=& 
			\displaystyle \sum_{\Phi_f \in \Lambda}
						\displaystyle \sum_{\psi^\prime \in G/\hreflex_0(\Phi_f) }
							\iota^{\sum_{\varphi H_0 \in I^{\prime}} \psi^\prime * f(\varphi H_0)} 
								\displaystyle \sum_{\psi \in H_0(I) \backslash G} \iota^{\sum_{\varphi H_0 \in I} \psi * f (\varphi H_0)} \psi^\prime \psi^{-1} \\
			&=&
			\displaystyle \sum_{\Phi_f \in \Lambda}
						\displaystyle \sum_{\psi^\prime \in G/\hreflex_0(\Phi_f) }
							\iota^{\sum_{\varphi H_0 \in I^{\prime}} \psi^\prime * f(\varphi H_0)} 
								\displaystyle \sum_{\psi \in H_0(I) \backslash G} \iota^{\sum_{\varphi H_0 \in I} \psi \psi^\prime * f (\varphi H_0)} \psi^{-1} \\
			&=:& (\star).
	\end{eqnarray*}
	Using the decomposition (\ref{eq:decomposition2}),
	\begin{eqnarray*}
			(\star)
			&=& 
			\displaystyle \sum_{ f \in \IndZtwo/ \langle {\mathbf 1} \rangle }
							\iota^{\sum_{\varphi H_0 \in I^{\prime}} f(\varphi H_0)} 
								\displaystyle \sum_{\psi \in H_0(I) \backslash G} \iota^{\sum_{\varphi H_0 \in I} \psi * f (\varphi H_0)} \psi^{-1} \\
			&=&
			\displaystyle \sum_{\psi \in H_0(I) \backslash G} \iota^{\sum_{\varphi H_0 \in I} r_{\Phi_0}(\psi)(\varphi H_0)} \psi^{-1}
			\displaystyle \sum_{ f \in \IndZtwo/ \langle {\mathbf 1} \rangle } 
							\iota^{\sum_{\varphi H_0 \in I^{\prime}} f(\varphi H_0) + \sum_{\varphi H_0 \in I} \psi \cdot f (\varphi H_0)} \\
			&=&
			\displaystyle \sum_{\psi \in H_0(I) \backslash G} \iota^{\sum_{\varphi H_0 \in I} r_{\Phi_0}(\psi)(\varphi H_0)} \psi^{-1}
			\displaystyle \sum_{ f \in \IndZtwo/ \langle {\mathbf 1} \rangle } 
							\iota^{\sum_{\varphi H_0 \in I^{\prime}} f(\varphi H_0) + \sum_{\varphi H_0 \in \psi^{-1} I} f (\varphi H_0)} \\
			&=&
					2^{N-1} \displaystyle \sum_{\psi \in H_0(I) \backslash G,\ I^\prime = \psi^{-1} I } \psi^{-1}
					+ 2^{N-2} \displaystyle \sum_{\psi \in H_0(I) \backslash G,\ I^\prime \neq \psi^{-1} I } (1 + \iota ) \psi^{-1}.
	\end{eqnarray*}
	If $I^{\prime} = \psi^{-1} I$ for some $\psi  \in H_0(I) \backslash G$, then, $I = I^\prime$ and $\psi \in H_0(I)$ since $I$, $I^{\prime} \in \Jodd$.  Therefore, 
	\begin{eqnarray*}
			(\star)
			=
				\begin{cases}
					2^{N-2} (id - \iota) + 2^{N-2} N_{G/H(I)} & \text{if } I = I^\prime, \\
					0  & \text{otherwise.}
				\end{cases}
	\end{eqnarray*}
	We obtain the lemma. 
\end{proof}

\begin{lem}\label{lem:eq2}
	For $\Phi_{f}, \Phi_{f^{\prime}} \in \Lambda$, 
	\begin{eqnarray}\label{eq:lemma_eq2}
			\displaystyle \sum_{I \in \Jodd} \iota^{\sum_{\varphi H_0 \in I} (f^{\prime} + f)(\varphi H_0)}
						N_{ \Phi_{f^{\prime}}(I) } \circ N_{\Phi_f(I)^* }
				=
				\begin{cases}
					2^{N-2} (id - \iota) + 2^{N-2} N_{G/\hreflex(\Phi_f)} & \text{if } f = f^{\prime},  \\
					0  & \text{otherwise.}
				\end{cases}%
	\end{eqnarray}
\end{lem}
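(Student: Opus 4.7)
The plan is to prove Lemma \ref{lem:eq2} by mirroring the proof of Lemma \ref{lem:eq1}, swapping the roles of the two decompositions: we now fix $\Phi_f, \Phi_{f^\prime}\in\Lambda$ and sum over $I\in\Jodd$, using decomposition (\ref{eq:decomposition1}) in place of (\ref{eq:decomposition2}).

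The first step is to rewrite each half-norm map as a signed sum over a full set of coset representatives in $G$. Since $\iota$ acts on $M$ as $-1$ and $G = S_{\Phi(I)} \sqcup \iota S_{\Phi(I)}$, the definition (\ref{eq:S_Phi(I)}) lets us write
\begin{eqnarray*}
	N_{\Phi_f(I)^*} &=& \sum_{\psi \in \hreflex_0(\Phi_f)\backslash G} \iota^{\sum_{\varphi H_0 \in I} r_{\Phi_f}(\psi^{-1})(\varphi H_0)}\,\psi^{-1}, \\
	N_{\Phi_{f^\prime}(I)} &=& \sum_{\psi^\prime \in G/H_0(I)} \iota^{\sum_{\varphi H_0 \in I} r_{\Phi_{f^\prime}}(\psi^\prime)(\varphi H_0)}\,\psi^\prime.
\end{eqnarray*}
Composing the two and using (\ref{eq:Phi_f}) (i.e., $r_{\Phi_g}(\sigma) + \sigma\cdot g = \sigma * g$) to absorb $f,f^\prime$ into the $*$-action, the combined $\iota$-exponent reduces to an expression of the form $\sum_{\varphi H_0 \in I} h_{\psi,\psi^\prime}(\varphi H_0)$, with $h_{\psi,\psi^\prime} \in \IndZtwo$ depending on $f,f^\prime$ and the two coset representatives.

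Second, I would apply (\ref{eq:decomposition1}) to rewrite $\sum_{I\in\Jodd}\sum_{\psi^\prime\in H_0(I)\backslash G}$ as a single unrestricted sum over subsets $I \subset \SetN$ of odd cardinality; after pulling the remaining outer $\psi$-sum out, what is left is an inner $I$-sum of the shape $\sum_{|I|\text{ odd}} \iota^{\sum_{\varphi H_0 \in I} h(\varphi H_0)}$ for some $h \in \IndZtwo$ built from $f+f^\prime$ and $\psi$. The elementary identity
\begin{eqnarray*}
	\sum_{\substack{I \subset \SetN \\ |I|\text{ odd}}} \prod_{\varphi H_0 \in I}(-1)^{h(\varphi H_0)} &=& \tfrac{1}{2}\Bigl[\prod_{\varphi H_0}(1+(-1)^{h(\varphi H_0)}) - \prod_{\varphi H_0}(1-(-1)^{h(\varphi H_0)})\Bigr]
\end{eqnarray*}
then evaluates this inner sum to $2^{N-1}$ when $h \equiv 0$, to $-2^{N-1}$ when $h \equiv \mathbf{1}$, and to $0$ otherwise.

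Tracing $h$ back through the change of variables, the two non-vanishing cases collectively correspond to $f = f^\prime$ (modulo $\langle\mathbf{1}\rangle$); the two surviving $\psi$-contributions then assemble exactly into the pieces $2^{N-2}(id - \iota)$ and $2^{N-2}N_{G/\hreflex(\Phi_f)}$ on the right-hand side of (\ref{eq:lemma_eq2}), while for $f \neq f^\prime$ every inner sum vanishes and the left-hand side is $0$. The main obstacle I anticipate is the sign bookkeeping across the first two steps: one must verify that after unifying the $(I,\psi^\prime)$-summations the $\iota$-exponent depends on $f,f^\prime$ only through $f+f^\prime$ mod $\langle\mathbf{1}\rangle$, so that the case split is controlled cleanly by whether $f=f^\prime$. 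Once that is done, the rest is parallel to the final paragraph of the proof of Lemma \ref{lem:eq1}.
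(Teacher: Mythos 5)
Your proposal follows the paper's own proof essentially step for step: expand both half norms as signed coset sums, combine the exponents via the cocycle relation for the $*$-action so that only $\psi*f-f^{\prime}$ survives, merge the double sum over $I\in\Jodd$ and $H_0(I)\backslash G$ into a sum over all odd subsets using (\ref{eq:decomposition1}), and evaluate the resulting character sum, which is $\pm 2^{N-1}$ or $0$, concluding via the orbit representatives in $\Lambda$. The only caveat is the representative/inverse bookkeeping you already flag (e.g.\ your exponent $r_{\Phi_{f^{\prime}}}(\psi^{\prime})$ versus the $r_{\Phi_{f^{\prime}}}((\psi^{\prime})^{-1})$ demanded by (\ref{eq:def2_N_{Phi(I)}})), which is exactly what the paper's substitution $\psi\mapsto\psi^{\prime}\psi$ and its second display resolve.
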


\begin{proof}
	The left-hand side of (\ref{eq:lemma_eq2}) equals
	\begin{eqnarray*}
		& &
		\hspace{-10mm}
			\displaystyle \sum_{I \in \Jodd}
						\displaystyle \sum_{\psi^\prime \in H_0(I) \backslash G}
							\iota^{\sum_{\varphi H_0 \in I} (f + f^\prime + r_{\Phi_{f^\prime}}(\psi^\prime))(\varphi H_0)} {\psi^\prime}^{-1} 
								\displaystyle \sum_{\psi \in G / \hreflex_0(\Phi_f) } \iota^{\sum_{\varphi H_0 \in I} r_{\Phi_f}(\psi)(\varphi H_0)}  \psi \\
			&=&
			\displaystyle \sum_{I \in \Jodd}
						\displaystyle \sum_{\psi^\prime \in H_0(I) \backslash G}
							\iota^{\sum_{\varphi H_0 \in I} (f+ f^\prime + r_{\Phi_{f^\prime}}(\psi^\prime))(\varphi H_0)} 
								\displaystyle \sum_{\psi \in G / \hreflex_0(\Phi_f) } \iota^{\sum_{\varphi H_0 \in I} r_{\Phi_f}(\psi^\prime \psi)(\varphi H_0)}  \psi \\
			&=:& (\star).
	\end{eqnarray*}
 	Since $r_{\Phi_{f^{\prime}} }(\tau) = r_{ \Phi_{f} }(\tau) + \tau \cdot (f - f^{\prime}) - (f - f^{\prime})$, 
	\begin{eqnarray*}
		(f+ f^\prime + r_{\Phi_{f^\prime} }(\psi^\prime) + r_{\Phi_f}(\psi^\prime \psi))(\varphi H_0)
			&=& (\psi^\prime \cdot( f - f^\prime ) + r_{\Phi_f }(\psi^\prime) + r_{\Phi_f}(\psi^\prime \psi))(\varphi H_0) \\
			&=& (\psi^\prime \cdot( f - f^\prime ) + \psi^\prime \cdot r_{\Phi_f}(\psi))(\varphi H_0) \\
			&=& \psi^\prime \cdot( \psi * f - f^\prime)(\varphi H_0).
	\end{eqnarray*}
	Then, by the decomposition (\ref{eq:decomposition1}),
	\begin{eqnarray*}
		(\star)
			&=&
			\displaystyle \sum_{I \in \Jodd}
						\displaystyle \sum_{\psi^\prime \in H_0(I) \backslash G}
								\displaystyle \sum_{\psi \in G / \hreflex_0(\Phi_f) } \iota^{\sum_{\varphi H_0 \in {\psi^\prime}^{-1} I} ( \psi * f - f^\prime)(\varphi H_0)}  \psi \\
			&=&
				\displaystyle \sum_{\psi \in G / \hreflex_0(\Phi_f) } \psi
				\displaystyle \sum_{I \subset \SetN,\ \abs{I} \text{ : odd } }
						\iota^{\sum_{\varphi H_0 \in I} ( \psi * f - f^\prime)(\varphi H_0)} \\
			&=& 
					2^{N-1}
				\displaystyle \sum_{\psi \in G / \hreflex_0(\Phi_f),\ f^\prime = \psi * f } \psi
					+ 2^{N-1} \displaystyle \sum_{\psi \in G / \hreflex_0(\Phi_f),\ f^\prime = \iota\psi * f } \iota \psi \\
					& & + 2^{N-2} \displaystyle \sum_{\psi \in G / \hreflex_0(\Phi_f),\ f^\prime = \iota\psi * f } (id + \iota) \psi.
	\end{eqnarray*}
	If $\psi * f = f^\prime$ for some $\psi  \in G / \hreflex(\Phi_f)$, then, $f^\prime  = f$ and $\psi  \in \hreflex(\Phi_f)$ since $f$, $f^{\prime} \in \Lambda$.  Therefore, 
	\begin{eqnarray*}
		(\star)
			&=& 
				\begin{cases}
					2^{N-2} (id - \iota) + 2^{N-2} N_{G/\hreflex(\Phi_f) } & \text{if }  f = f^{\prime}, \\
					0  & \text{otherwise.}
				\end{cases}%
	\end{eqnarray*}
	We obtain the lemma. 
\end{proof}

\begin{proof}[Proof of Proposition \ref{prop:multiplication by 2^N-1(2)}]
	Denote the canonical embedding $M^{H(I^\prime)} \hookrightarrow \bigoplus_{I \in \Jodd} M^{H(I)}$ by $i_{I^\prime}$,
	and the canonical projection $\bigoplus_{I \in \Jodd} M^{H(I)} \twoheadrightarrow M^{H(I^\prime)}$
	by $p_{I^\prime}$.
	Then, for $I^\prime, I^{\prime\prime} \in \Jodd$ by Lemma \ref{lem:eq1},
	\begin{eqnarray*}
		p_{I^{\prime\prime}} \circ N_{\Lambda \rightarrow J} \circ N_{J \rightarrow \Lambda} \circ i_{I^\prime} 
			&=& \displaystyle \sum_{\Phi_f \in \Lambda}
				\iota^{\sum_{\varphi^{\prime\prime} H_0 \in I^{\prime\prime}} f(\varphi^{\prime\prime} H_0) + \sum_{\varphi^\prime H_0 \in I^\prime} f(\varphi^\prime H_0)} 
						N_{\Phi_f(I^{\prime\prime})^*} \circ N_{\Phi_f(I^\prime)} \\
			&=& 
					\begin{cases}
						2^{N-1} id & \text{if } I^{\prime\prime} = I^\prime,  \\
						0  &  \text{otherwise.}
					\end{cases}%
	\end{eqnarray*}

	Similarly, denote the canonical embedding $M^{\hreflex(\Phi^\prime)} \hookrightarrow  \bigoplus_{\Phi \in \Lambda} M^{\hreflex(\Phi)}$
	and the projection $\bigoplus_{\Phi \in \Lambda} M^{\hreflex(\Phi)} \twoheadrightarrow M^{\hreflex(\Phi^\prime)}$
	by $i_{\Phi^\prime}$ and $p_{\Phi^\prime}$.
	Then, for $\Phi_{f^\prime}, \Phi_{f^{\prime\prime}} \in \Lambda$ by Lemma \ref{lem:eq2}, 
	\begin{eqnarray*}
		p_{\Phi_{f^{\prime\prime}}} \circ N_{J \rightarrow \Lambda} \circ N_{\Lambda \rightarrow J} \circ i_{\Phi_{f^\prime}} 
			&=& \displaystyle \sum_{I \in \Jodd} \iota^{\sum_{\varphi H_0 \in I} (f^{\prime} + f^{\prime\prime})(\varphi H_0)}
						N_{\Phi_{f^{\prime\prime}}(I)} \circ
							N_{\Phi_{f^{\prime}}(I)^* } \nonumber \\
			\hspace{10mm}
			&=&
				\begin{cases}
					2^{N-1} id & \text{if } f^\prime = f^{\prime\prime},  \\
					0 & \text{otherwise.}
				\end{cases}%
	\end{eqnarray*}
	Therefore, the assertion holds.
\end{proof}

For a given CM-field $L$, we denote by $L^{1-\iota}$, the subspace consisting of $a \in L$ such that $\iota a = -a$.
Then, $N_{\Lambda \rightarrow J}$ defined in Proposition \ref{prop:multiplication by 2^N-1(2)} induces an isomorphism as $\RationalField$-linear spaces:
		\begin{eqnarray}\label{eq:eq3}
			N_{\Lambda \rightarrow J} : \displaystyle \bigoplus_{\Phi \in \Lambda} \kreflex(\Phi)^{1-\iota}
				\stackrel{\cong}{\longrightarrow }
			\displaystyle \bigoplus_{I \in \Jodd} K(I)^{1-\iota}.
		\end{eqnarray}

Even if the cardinality of $I$ is even, $H(I)$, $H_0(I)$ and $S_{\Phi(I)}$ are well-defined by (\ref{eq:H(I)}), (\ref{eq:H_0(I)}) and (\ref{eq:S_Phi(I)}).
$\Phi(I)$ and $\Phi(I)^*$ are also defined similarly. 
When the cardinality of $I$ is odd, we have $\iota \in H(I)$, hence, the fixed field $K(I)$ by $H(I)$ is a totally real field.
$[H_0(I) : H(I)] = 1$ or $2$ in this case.

Let $M$ be a $2$-divisible $G$-module $M$ on which $\iota$ acts as $-1$.
Then, for $I \subset \SetN$ with odd cardinality, the half norm map $N_{\Phi(I)}$, $N_{\Phi(I)^*}$ satisfies on $M^{H(I)}$, $M^{\hreflex(\Phi)}$ respectively,
\begin{eqnarray}
	N_{\Phi(I)} &=& \frac{1}{2} \sum_{ \psi \in H(I) \backslash G } (-1)^{\sum_{\varphi H_0 \in I} r_{\Phi}(\psi)(\varphi H_0)} \psi^{-1}, \label{eq:def2_N_{Phi(I)}} \\
	N_{\Phi(I)^*} &=&\frac{1}{2} \sum_{\psi \in G / \hreflex( \Phi ) } (-1)^{\sum_{\varphi H_0 \in I} r_{\Phi}(\psi)(\varphi H_0)} \psi \label{eq:def2_N_{Phi(I)^*}}.
\end{eqnarray}
In the sequel, we use (\ref{eq:def2_N_{Phi(I)}}), (\ref{eq:def2_N_{Phi(I)^*}}) as the definition of the half norm map $N_{\Phi(I)}$ and $N_{\Phi(I)^*}$.
It is well-defined since $\sum_{\varphi H_0 \in I} r_{\Phi}(\sigma)(\varphi H_0)$ depends only on the double coset of $H(I) \sigma \hreflex(\Phi)$.
By this, we can calculate $N_{\Phi(I)}$, $N_{\Phi(I)^*}$ for $I \subset \SetN$ with even cardinality. 
It is also possible to omit the assumption that $\iota$ acts on $M$ as $-1$.

$N_{\Phi(I)}$ gives a map from $M^{H(I)}$ to $M^{\hreflex(\Phi)}$
because we have for $\sigma \in \hreflex(\Phi)$,
\begin{eqnarray*}
	\sigma N_{\Phi(I)} &=& \frac{1}{2} \sum_{\psi \in H(I) \backslash G } (-1)^{\sum_{\varphi H_0 \in I} r_{\Phi}(\psi)(\varphi H_0)} \sigma \psi^{-1} \\
								&=& \frac{1}{2} \sum_{\psi \in H(I) \backslash G } (-1)^{\sum_{\varphi H_0 \in I} r_{\Phi}(\psi \sigma )(\varphi H_0)} \psi^{-1} = N_{\Phi(I)}. \\
\end{eqnarray*}
Similarly, $N_{\Phi(I)^*}(M^{\hreflex(\Phi)}) \subset M^{H(I)}$, 
$\iota N_{\Phi(I)} = (-1)^\abs{I} N_{\Phi(I)}$ and $\iota N_{\Phi(I)^*} = (-1)^\abs{I} N_{\Phi(I)^*}$ also hold.

For a $G$-module $M$, we denote the complex conjugation of $a \in M$ by $\bar{a}$. 
Let $L$ be a CM-field or a totally real field, and $F$ be a subfield of $L$.
Then, a positive definite quadratic form over $L$ is defined by $a \mapsto Tr_{L/F}(\bar{a}a)$.

On each direct sum
$\bigoplus_{\Phi \in \Lambda} \kreflex(\Phi)$ and
$\bigoplus_{I \in \Jodd} K(I)$,
a quadratic form is defined by the orthogonal sum of $Tr_{K(I)/\RationalField}(\bar{a}a)$, $Tr_{\kreflex(\Phi)/\RationalField}(\bar{a}a)$ respectively.
Denote them by $Q_J$ and $Q_\Lambda$.
They have canonical linear extensions to 
			$K_0^c \otimes_\RationalField ( \bigoplus_{I \in \Jodd} K(I) )$,
			$K_0^c \otimes_\RationalField ( \bigoplus_{\Phi \in \Lambda} \kreflex(\Phi) )$,
which are denoted by $K_0^c \otimes_{\RationalField} Q_J$ and $K_0^c \otimes_{\RationalField} Q_\Lambda$.

For the maximum totally real field $K_0$ of $K$,
take a totally positive element $d \in K_0$ such that $K = \QuadExt{K_0}{-d}$.
Then, we have the Pfister form defined over $K_0^c$:
\begin{eqnarray}\label{eq:definition of q}
	q := \langle 1, \varphi_1(d) \rangle \otimes \cdots \otimes \langle 1, \varphi_N(d) \rangle,
\end{eqnarray}
where $\langle 1, a \rangle$ represents the quadratic form $x^2 + a y^2$.
This quadratic form can be regarded as a tensor product of the norm $N_{ \QuadExt{K_0^c}{-\varphi_i(d)} / K_0^c }$ 
defined on 
	\begin{eqnarray}\label{eq:definition of V_q}
		V := \QuadExt{K_0^c}{-\varphi_1(d)} \otimes_{K_0^c} \cdots \otimes_{K_0^c}  \QuadExt{K_0^c}{-\varphi_N(d)},
	\end{eqnarray}
which is a $K_0^c$-algebra. 

For $I \subset \SetN$, let
\begin{eqnarray}
	v_I := c_1 \otimes \cdots \otimes c_N,\ 
	c_i := \begin{cases}
				\sqrt{ -\varphi_i(d) } & \text{if } \varphi_i H_0 \in I,  \\
				1  &  \text{otherwise.}
			\end{cases}%
\end{eqnarray}
Then, $\{ v_I : I \subset \SetN \}$ makes a basis of $V$ as a linear space over $K_0^c$.

Now, we have $\prod_{\varphi H_0 \in I} \sqrt{ -\varphi (d) }  \in K(I)$, since $H(I)$ fixes it.  
Let $[I] := \{ \sigma I : \sigma \in G \}$.
Then, there is the canonical isomorphism:
\begin{eqnarray}
	K_0^c \otimes_\RationalField K(I) \stackrel{\cong}{\longrightarrow } \bigoplus_{I^\prime \in [I]} \FieldExt{K_0^c}{ \textstyle \prod_{\varphi \in I^\prime} \sqrt{ -\varphi (d) } }.
\end{eqnarray}
We denote by $J$, a system of representatives for the orbits of the action of $G$ on $\SetN$.
Then,
\begin{eqnarray}
	K_0^c \otimes_\RationalField \left( \textstyle \bigoplus_{I \in J} K(I) \right) \cong \bigoplus_{I \subset \SetN} \FieldExt{K_0^c}{ \textstyle \prod_{\varphi \in I} \sqrt{ -\varphi (d) } }.
\end{eqnarray}
This map gives an embedding
	$\phi_{\Jodd} : K_0^c \otimes_\RationalField ( \bigoplus_{I \in \Jodd} K(I)^{1-\iota} ) \hookrightarrow V$.
Hence, by (\ref{eq:eq3}), there is also an embedding $\phi_{\Jodd} \circ N_{\Lambda \rightarrow J} : K_0^c \otimes_\RationalField ( \bigoplus_{\Phi \in \Lambda} \kreflex(\Phi)^{1-\iota} ) \hookrightarrow V$.

Theorem \ref{thm:Pfister form and reflex fields} shows that $\phi_{\Jodd} \circ N_{\Lambda \rightarrow J}$ is extended to an isomorphism between 
$K_0^c \otimes_\RationalField ( \bigoplus_{\Phi \in \Lambda} \kreflex(\Phi) )$ and $V$,
conserving their quadratic forms.

\begin{thm} \label{thm:Pfister form and reflex fields}
For a totally positive $d \in K_0$ such that $K = K_0(\sqrt{-d})$, 
define a Pfister form $q$ and $K_0^c$-algebra $V$ by (\ref{eq:definition of q}) and (\ref{eq:definition of V_q}) respectively.
Then, 
there is an isomorphism as $K_0^c$-algebras:
\begin{eqnarray*}
	\phi_\Lambda : K_0^c \otimes_\RationalField \left( \bigoplus_{\Phi \in \Lambda} \kreflex(\Phi) \right) \stackrel{\cong}{\longrightarrow} V.
\end{eqnarray*}
Furthermore, such isomorphisms satisfy $q( \phi_\Lambda(w) ) = 2^{-N} K^c_0 \otimes_{\RationalField} Q_\Lambda(w)$,
where $Q_\Lambda$ is the orthogonal sum of the quadratic form $Tr_{\kreflex(\Phi)/\RationalField}(\bar{a}a)$.
\end{thm}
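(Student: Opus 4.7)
The plan is to construct $\phi_\Lambda$ by extending the embedding $\phi_{\Jodd} \circ N_{\Lambda \rightarrow J}$ (built just above the statement of the theorem) from the $(1-\iota)$-eigenspaces, where it gives the isomorphism (\ref{eq:eq3}), to the full direct sum $\bigoplus_{\Phi \in \Lambda} \kreflex(\Phi)$. The key observation is that the definitions (\ref{eq:def2_N_{Phi(I)}}) and (\ref{eq:def2_N_{Phi(I)^*}}) of the half-norm maps make sense for arbitrary $I \subset \SetN$, and satisfy $\iota \,N_{\Phi(I)^*} = (-1)^{\abs{I}} N_{\Phi(I)^*}$, so for $\abs{I}$ even the half norm $N_{\Phi(I)^*}$ lands in the totally real subfield $K(I)_0$, matching the basis vector $v_I$ with $\abs{I}$ even on the $V$-side. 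Letting $J$ denote a full system of orbit representatives for the $G$-action on all subsets of $\SetN$, I would first define a $K_0^c$-algebra map $\phi_J : K_0^c \otimes_\RationalField \bigoplus_{I \in J} K(I) \rightarrow V$ by sending the generator $\prod_{\varphi H_0 \in I} \sqrt{-\varphi(d)} \in K(I)$ to $v_I$ and gluing over orbits, then set $\phi_\Lambda := \phi_J \circ \widetilde{N}_{\Lambda \rightarrow J}$, with $\widetilde{N}_{\Lambda \rightarrow J}$ the extension of $N_{\Lambda \rightarrow J}$ that uses the general half norms on all of $\bigoplus_\Phi \kreflex(\Phi)$.

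Both the domain and codomain of $\phi_\Lambda$ have $K_0^c$-dimension $2^N$: for the codomain this is clear from $V = K_0^c[x_1,\ldots,x_N]/(x_i^2 + \varphi_i(d))$, and for the domain it is Lemma \ref{lem:the sum of the degrees}. Thus it suffices to show injectivity, which I would deduce from a straightforward generalization of Lemmas \ref{lem:eq1} and \ref{lem:eq2} to arbitrary $I \subset \SetN$ (the sign arguments in those lemmas go through unchanged once one uses $\iota N_{\Phi(I)^*} = (-1)^{\abs{I}} N_{\Phi(I)^*}$), giving an analogue of Proposition \ref{prop:multiplication by 2^N-1(2)} stating that a natural inverse direction composed with $\widetilde{N}_{\Lambda \rightarrow J}$ equals multiplication by $2^{N-1}$. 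Multiplicativity of $\phi_\Lambda$ then reduces to that of $\phi_J$, which is immediate from the compatible product-of-fields decompositions of both sides indexed by orbits in $J$.

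For the final quadratic-form identity $q(\phi_\Lambda(w)) = 2^{-N}\, K_0^c \otimes_\RationalField Q_\Lambda(w)$, I would evaluate both sides on bases. The Pfister form gives $q(v_I) = \prod_{\varphi_i H_0 \in I} \varphi_i(d) = v_I \bar{v}_I$, while $Q_\Lambda$ on the reflex side expands as a sum of trace forms $Tr_{\kreflex(\Phi)/\RationalField}(\bar{b}_\Phi b_\Phi)$. Pulling back through $\phi_\Lambda^{-1}$ via the composition identities of the extended Lemmas \ref{lem:eq1} and \ref{lem:eq2}, the $2^{N-1}$ factor they produce combines with the factor $2$ arising from the $\iota$-eigenspace splitting $\kreflex(\Phi) = \kreflex(\Phi)^{\iota} \oplus \kreflex(\Phi)^{1-\iota}$ to yield the coefficient $2^{-N}$.

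The main obstacle will be the last step: the precise quadratic-form identity demands careful bookkeeping of the signs $(-1)^{\sum_{\varphi H_0 \in I} r_\Phi(\sigma)(\varphi H_0)}$ that appear throughout the half-norm formulas, and one must verify that the trace pairing on the reflex side correctly matches the orthogonal decomposition of $q$ along the basis $\{ v_I \}_{I \subset \SetN}$ with every factor of $2$ accounted for on both parities of $\abs{I}$.
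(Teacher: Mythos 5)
Your construction of the map is essentially the one the paper uses: the explicit formula (\ref{eq:definition of phi_Lambda}) for $\phi_\Lambda$ is exactly ``apply the half norms $N_{\Phi_f(I)^*}$ extended to all $I \subset \SetN$, then send $\prod_{\varphi H_0 \in I}\sqrt{-\varphi(d)}$ to $v_I$.'' The dimension count and the final deduction of bijectivity are also fine (though the paper gets injectivity more cheaply from positive definiteness of $q$ and $Q_\Lambda$ once the form identity is known, rather than from a composition identity). The genuine gap is in your treatment of multiplicativity. You claim it ``reduces to that of $\phi_J$, which is immediate from the compatible product-of-fields decompositions of both sides indexed by orbits in $J$.'' But $\phi_J$ is \emph{not} an algebra homomorphism, and the decompositions are not compatible: on $\bigoplus_{I\in J} K_0^c\otimes_\RationalField K(I)$ the product is componentwise, so elements supported on distinct orbits multiply to zero, whereas in $V$ one has $v_I\, v_{I^\prime} \in K_0^c\, v_{I\veebar I^\prime}$ with $I\veebar I^\prime$ generally in a different orbit; the basis $\{v_I\}$ is a module decomposition of $V$, not its decomposition as a product of fields. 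Moreover $\widetilde{N}_{\Lambda\rightarrow J}$, being a sum of half-norm (coset-sum) operators, is not multiplicative either. The whole content of the multiplicativity of $\phi_\Lambda$ is that the composite of these two non-multiplicative maps is multiplicative, and this rests on the convolution identity of Lemma \ref{lem:eq3}: a sum over \emph{all} $I\subset\SetN$ of products $N_{\Phi_f(I)^*}(a)\,N_{\Phi_{f^\prime}(I\veebar I^\prime)^*}(b)$ collapses, via the cocycle relation $r_{\Phi_{f^\prime}}(\tau)=r_{\Phi_f}(\tau)+\tau\cdot(f^\prime-f)-(f^\prime-f)$ and a character-sum orthogonality over $\IndZtwo$, to $2^{N-1}(-1)^{\sum_{\varphi H_0\in I^\prime}f(\varphi H_0)}N_{\Phi_f(I^\prime)^*}(a\,\sigma(b))$ when $f=\sigma*f^\prime$ and to $0$ otherwise. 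This is not a ``straightforward generalization'' of Lemmas \ref{lem:eq1} and \ref{lem:eq2} (which compute compositions, not products twisted by $\veebar$), and without it the proof does not close.

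The same lemma is also what delivers the quadratic-form identity: the paper writes $q(\phi_\Lambda((a_{\Phi_f})))$ as a double sum over $\Lambda\times\Lambda$ and all $I$, uses $\iota * f^\prime = f^\prime + {\mathbf 1}$ to turn the second factor into a conjugate, and then applies Lemma \ref{lem:eq3} with $I^\prime=\emptyset$ to obtain $2^{-N+1}\sum_{\Phi_f}N_{\Phi_f(\emptyset)^*}(a_{\Phi_f}\overline{a_{\Phi_f}}) = 2^{-N}Q_\Lambda((a_{\Phi_f}))$. You correctly identify this bookkeeping as the main obstacle, but since the key identity needed there is the same one missing from your multiplicativity argument, supplying Lemma \ref{lem:eq3} (or an equivalent character-sum computation over $\IndZtwo$) is the essential missing step of your proposal.
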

The following lemma is used for the proof.
\begin{lem}\label{lem:eq3}
	Let $I, I^\prime$ be subsets of $\SetN$, and denote their exclusive disjunction by $I \veebar I^\prime$.
	For any $2$-divisible $G$-module $M$ and $\Phi_f, \Phi_{f^\prime} \in \Lambda$, let $a, b$ be elements of $M$ fixed by $\hreflex(\Phi_f), \hreflex(\Phi_{f^\prime})$ respectively. Then,
	\begin{eqnarray}\label{eq:lemma_eq3}
			& & \hspace{-10mm}
				\sum_{I \subset \SetN}
					(-1)^{\sum_{\varphi H_0 \in I} f(\varphi H_0) + \sum_{\varphi H_0 \in I \veebar I^\prime} f^\prime(\varphi H_0)}
							N_{\Phi_f(I)^*}(a)
								N_{\Phi_{f^\prime}(I \veebar I^\prime)^*}(b)
							\nonumber \\
				\hspace{10mm}
				&=&
					\begin{cases}
						\displaystyle 2^{N-1}
							(-1)^{\sum_{\varphi H_0 \in I^\prime} f(\varphi H_0)}
									N_{\Phi_f(I^\prime)^*}(a \times \sigma(b) )  & \text{if } f = \sigma * f^\prime \text{ for some } \sigma \in G,  \\
						0  &  \text{otherwise.}\\
					\end{cases} \hspace{10mm}%
	\end{eqnarray}
\end{lem}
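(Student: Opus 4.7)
The plan is to expand both half norm maps using their explicit formulas in (\ref{eq:def2_N_{Phi(I)^*}}) and then swap the order of summation, so that the sum over $I \subset \SetN$ becomes the inner sum. Write
\begin{eqnarray*}
N_{\Phi_f(I)^*}(a) &=& \tfrac{1}{2} \sum_{\psi \in G/\hreflex(\Phi_f)} (-1)^{\sum_{\varphi H_0 \in I} r_{\Phi_f}(\psi)(\varphi H_0)} \psi(a), \\
N_{\Phi_{f^\prime}(I\veebar I^\prime)^*}(b) &=& \tfrac{1}{2} \sum_{\psi^\prime \in G/\hreflex(\Phi_{f^\prime})} (-1)^{\sum_{\varphi H_0 \in I\veebar I^\prime} r_{\Phi_{f^\prime}}(\psi^\prime)(\varphi H_0)} \psi^\prime(b).
\end{eqnarray*}
After multiplying these together with the prefactor $(-1)^{\sum_{I} f + \sum_{I\veebar I^\prime} f^\prime}$, use the elementary mod-$2$ identity $\sum_{\varphi H_0 \in I \veebar I^\prime} g(\varphi H_0) \equiv \sum_{I} g + \sum_{I^\prime} g$ to collect all $I$-dependent exponents into a single sum $\sum_{\varphi H_0 \in I}(f + f^\prime + r_{\Phi_f}(\psi) + r_{\Phi_{f^\prime}}(\psi^\prime))(\varphi H_0)$, while the remaining exponent $\sum_{\varphi H_0 \in I^\prime}(f^\prime + r_{\Phi_{f^\prime}}(\psi^\prime))(\varphi H_0)$ is independent of $I$.

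Next, apply the orthogonality relation $\sum_{I \subset \SetN} (-1)^{\sum_{\varphi H_0 \in I} h(\varphi H_0)} = 2^N \delta_{h,0}$ to the inner sum over $I$. Using the identity $f + r_{\Phi_f}(\psi) = \psi * f$ established in the proof of Lemma \ref{lem:conjugacy class of CM-types}, the surviving condition $f + f^\prime + r_{\Phi_f}(\psi) + r_{\Phi_{f^\prime}}(\psi^\prime) \equiv 0$ in $\IndZtwo$ is equivalent to $\psi * f = \psi^\prime * f^\prime$. By that same lemma, such a pair $(\psi, \psi^\prime)$ exists if and only if $f$ and $f^\prime$ lie in the same $G$-orbit, and then for each $\psi \in G/\hreflex(\Phi_f)$ there is a unique $\psi^\prime \in G/\hreflex(\Phi_{f^\prime})$ solving it, namely (when $f = \sigma * f^\prime$) $\psi^\prime = \psi \sigma$.

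In the non-conjugate case, the LHS vanishes and we are done. In the conjugate case $f = \sigma * f^\prime$, rewrite the leftover exponent at $I^\prime$ using $f^\prime + r_{\Phi_{f^\prime}}(\psi^\prime) = \psi^\prime * f^\prime = \psi * f = f + r_{\Phi_f}(\psi)$, which pulls out a factor $(-1)^{\sum_{\varphi H_0 \in I^\prime} f(\varphi H_0)}$ and leaves $(-1)^{\sum_{\varphi H_0 \in I^\prime} r_{\Phi_f}(\psi)(\varphi H_0)}$ inside. Note that $\sigma \hreflex(\Phi_{f^\prime}) \sigma^{-1} = \hreflex(\Phi_f)$ since $\sigma$ carries $\Phi_{f^\prime}$ to $\Phi_f$, so $\sigma(b)$ is fixed by $\hreflex(\Phi_f)$ and hence $a \times \sigma(b) \in M^{\hreflex(\Phi_f)}$. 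The remaining sum over $\psi \in G/\hreflex(\Phi_f)$ is then exactly $2 N_{\Phi_f(I^\prime)^*}(a \times \sigma(b))$ by definition, and combining with the prefactor $\frac{1}{4} \cdot 2^N$ gives the claimed $2^{N-1}$.

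The main obstacle is purely notational: keeping track of the four layers of indexing ($I$, $\psi$, $\psi^\prime$, and the sign accumulated by the two $r_\Phi$ cocycles) and correctly identifying the well-defined bijection between cosets $G/\hreflex(\Phi_f)$ and $G/\hreflex(\Phi_{f^\prime})$ in the conjugate case. All the algebra is routine once one systematically applies the cocycle relation $f + r_{\Phi_f}(\psi) = \psi * f$ to collapse the exponents.
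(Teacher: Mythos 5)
Your proposal is correct and follows essentially the same route as the paper: expand both half norms via the explicit formulas (\ref{eq:def2_N_{Phi(I)}})--(\ref{eq:def2_N_{Phi(I)^*}}), regroup the exponents mod $2$, apply orthogonality of characters to the inner sum over $I$, and use the cocycle relation $f + r_{\Phi_f}(\psi) = \psi * f$ to reduce the surviving condition to $\psi * f = \psi' * f'$. The only cosmetic difference is that you attach the combined exponent to $\sum_{\varphi H_0 \in I}$ rather than $\sum_{\varphi H_0 \in I \veebar I'}$ and carry the leftover term $f' + r_{\Phi_{f'}}(\psi')$ instead of $f + r_{\Phi_f}(\psi)$ at $I'$, which you correctly reconcile since these agree on the surviving pairs.
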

%
%
\begin{proof}
	The left-hand side of (\ref{eq:lemma_eq3}) equals
	\begin{eqnarray*}
			& & \hspace{-10mm}
			\frac{1}{4}	
				\displaystyle \sum_{\psi \in G/\hreflex(\Phi_f)}
					\displaystyle \sum_{\psi^\prime \in G/\hreflex(\Phi_{f^\prime}) }
							\psi (a)
								\psi^\prime (b) \\
				& & \times				
									\displaystyle \sum_{ I \subset \SetN }
									(-1)^{\sum_{\varphi H_0 \in I} ( f + r_{\Phi_f}(\psi) )(\varphi H_0) 
												+ \sum_{\varphi H_0 \in I \veebar I^\prime } ( f^{\prime} + r_{\Phi_{f^\prime}}(\psi^\prime) )(\varphi H_0)}
				=: (\star).
	\end{eqnarray*}
	We have
	\begin{eqnarray} \label{eq:tranform1}
			& & \hspace{-10mm}
	\sum_{\varphi H_0 \in I } ( f + r_{\Phi_f}(\psi) )(\varphi H_0) + \sum_{\varphi H_0 \in I \veebar I^\prime } (f^\prime + r_{\Phi_{f^\prime}}(\psi^\prime) )(\varphi H_0)  \nonumber \\
			&=& \sum_{\varphi H_0 \in I } ( f + r_{\Phi_f}(\psi) )(\varphi H_0) + \sum_{\varphi H_0 \in I^\prime } ( f + r_{\Phi_f}(\psi) )(\varphi H_0) \nonumber \\
			& &		+ \sum_{\varphi H_0 \in I^\prime } ( f + r_{\Phi_f}(\psi) )(\varphi H_0) + \sum_{\varphi H_0 \in I \veebar I^\prime } (f^\prime + r_{\Phi_{f^\prime}}(\psi^\prime) )(\varphi H_0)  \nonumber \\
			&=& \sum_{\varphi H_0 \in I \veebar I^\prime } ( f + f^\prime + r_{\Phi_f}(\psi) + r_{\Phi_{f^\prime}}(\psi^\prime) ) (\varphi H_0)
					+ \sum_{\varphi H_0 \in I^\prime } ( f + r_{\Phi_f}(\psi) )(\varphi H_0). \hspace{10mm}
	\end{eqnarray}
	Furthermore, since $r_{ \Phi_{f^\prime} }(\tau) = r_{\Phi_f}(\tau) + \tau \cdot (f^\prime - f) - (f^\prime - f)$, we have 
	\begin{eqnarray*}
		 f + f^{\prime} + r_{\Phi_f}(\psi) + r_{\Phi_{f^\prime}}(\psi^\prime)
			&=& \psi \cdot( f - f^{\prime} )  + r_{\Phi_{f^\prime}}(\psi) + r_{\Phi_{f^\prime}}(\psi^\prime) \\
				&=& \psi \cdot( f - f^{\prime} )  + r_{\Phi_{f^\prime}}(\psi) + r_{\Phi_{f^\prime}}(\psi) + \psi \cdot r_{\Phi_{f^\prime}}(\psi^{-1} \psi^\prime)  \\
				&=& \psi \cdot( f - (\psi^{-1} \psi^\prime) * f^{\prime} ).
	\end{eqnarray*}
	Hence, 
	\begin{eqnarray*}
			(\star) 
			&=&
			\frac{1}{4}	
				\displaystyle \sum_{\psi \in G/\hreflex(\Phi_f)}
					\displaystyle \sum_{\psi^\prime \in G/\hreflex(\Phi_{f^\prime}) }
							(-1)^{\sum_{\varphi H_0 \in I^\prime } ( f + r_{\Phi_f}(\psi) )(\varphi H_0)}
							\psi (a)
								\psi^\prime (b) \\
			& & \times \displaystyle \sum_{ I \subset \SetN }
									(-1)^{\sum_{\varphi H_0 \in I \veebar I^\prime} \psi \cdot \left(f - (\psi^{-1} \psi^\prime) * f^{\prime} \right)(\varphi H_0)}.
	\end{eqnarray*}
	Hence, 
	let $\sigma = \psi^{-1} \psi^\prime$, then,
	\begin{eqnarray*}
			(\star) 
			&=&
				\begin{cases}
					\displaystyle 2^{N-2} \sum_{\psi \in G / \hreflex(\Phi_f)} (-1)^{\sum_{\varphi H_0 \in I^\prime} (f + r_{\Phi_f}(\psi))(\varphi H_0)} \psi(a \times \sigma(b) )  & \text{if } f = \sigma * f^\prime \text{ for some } \sigma \in G,  \\
					0  &  \text{otherwise.}
				\end{cases}%
	\end{eqnarray*}
	Therefore, the equation (\ref{eq:lemma_eq3}) follows.
\end{proof}

\begin{proof}[Proof of Theorem \ref{thm:Pfister form and reflex fields}]
If there is an isomorphism $\phi_\Lambda$ such that that $q( \phi_\Lambda(w) ) = 2^{-N} Q_\Lambda(w)$, 
the other isomorphisms also have the property, because $Q_\Lambda \circ \sigma= Q_\Lambda$ holds for any automorphisms $\sigma$ 
of $K_0^c \otimes_\RationalField ( \bigoplus_{\Phi \in \Lambda} \kreflex(\Phi) )$.

For $(a_{\Phi_f}) \in \bigoplus_{\Phi_f \in \Lambda} \kreflex(\Phi_f)$, define $\phi_\Lambda$ by
\begin{eqnarray}\label{eq:definition of phi_Lambda}
	\phi_\Lambda((a_{\Phi_f}))
			:= 2^{-N+1} \sum_{ I }   
							\frac{ v_I }{ \prod_{\varphi H_0 \in I} \sqrt{-\varphi(d)} } \sum_{\Phi_f \in \Lambda} (-1)^{ \sum_{\varphi H_0 \in I } f(\varphi H_0) }
								N_{\Phi_f(I)^*}(a_{\Phi_f}). 
\end{eqnarray}
The coefficient of $v_i$ is an element of $K_0^c$,
because we have $c N_{\Phi_f(I)^*} = (-1)^{ \sum_{\varphi H_0 \in I } r_{\Phi_f}(c)(\varphi H_0) }  N_{\Phi_f(I)^*}$ for $c \in C$, 
and furthermore, by Lemma \ref{lem:embedding r} and Proposition \ref{prop:1-cocycle r},
\begin{eqnarray}
	c( \prod_{\varphi H_0 \in I} \sqrt{-\varphi(d)} ) = (-1)^{ \sum_{\varphi H_0 \in I } r_{\Phi_f}(c)(\varphi H_0) }  \prod_{\varphi H_0 \in I} \sqrt{-\varphi(d)}.
\end{eqnarray}
Hence, (\ref{eq:definition of phi_Lambda}) is well-defined.

Let's see that $\phi_\Lambda$ satisfies the assertion of the theorem.
It is clear that $\phi_\Lambda$ is $K_0^c$-linear.
We divide the proof into two parts.

\begin{enumerate}[(i)]
\item $\phi_\Lambda$ is a homomorphism between $K_0^c$-algebras;
it is enough if we can show $\phi_\Lambda((a_\Phi)) \phi_\Lambda((b_\Phi)) = \phi_\Lambda((a_\Phi b_\Phi))$ 
for any $(a_\Phi)_{\Phi \in \Lambda}, (b_\Phi)_{\Phi \in \Lambda} \in \bigoplus_{\Phi \in \Lambda} \kreflex(\Phi)$.
Denote the exclusive disjunction of  $I$ and $I^\prime$ by $I \veebar I^\prime$.
Then, we have
\begin{small}
\begin{eqnarray*}
	\phi_\Lambda  ( (a_{\Phi_f}) ) 
	\phi_\Lambda  ( (b_{\Phi_f}) ) 
			&=& 2^{-2 N}  \sum_{\Phi_f \in \Lambda} \sum_{ \Phi_{f^\prime} \in \Lambda} 
					\sum_{I}
					\sum_{I^\prime} 
							\frac{ v_{I \veebar I^\prime}  }{ \prod_{\varphi H_0 \in I \veebar I^\prime} \sqrt{-\varphi(d)} }  \\
			& & \times
						\sum_{\psi \in G / \hreflex(\Phi_f)} (-1)^{ \sum_{\varphi H_0 \in I } (f + r_{\Phi_f}(\psi))(\varphi H_0) }
							\psi( a_{\Phi_f} ) \\
			& & \times
						\sum_{\psi^\prime \in G / \hreflex(\Phi_{f^\prime})} (-1)^{ \sum_{\varphi H_0 \in I^\prime } (f^\prime + r_{\Phi_{f^\prime}}(\psi^\prime))(\varphi H_0) }
							\psi^\prime( b_{\Phi_{f^\prime}} ) \\
			&=& 2^{-2 N}  \sum_{\Phi_f \in \Lambda} \sum_{ \Phi_{f^\prime} \in \Lambda} 
					\sum_{I}
					\sum_{I^\prime} 
							\frac{ v_{I^\prime}  }{ \prod_{\varphi H_0 \in I^\prime} \sqrt{-\varphi(d)} }  \\
			& & \times
						\sum_{\psi \in G / \hreflex(\Phi_f)} (-1)^{ \sum_{\varphi H_0 \in I } (f + r_{\Phi_f}(\psi))(\varphi H_0) }
							\psi (a_{\Phi_f} ) \\
			& & \times
						\sum_{\psi^\prime \in G / \hreflex(\Phi_{f^\prime})} (-1)^{ \sum_{\varphi H_0 \in {I \veebar I^\prime} } (f^\prime + r_{\Phi_{f^\prime}}(\psi^\prime))(\varphi H_0) }
							\psi^\prime (b_{\Phi_{f^\prime}} ) \\
			&=& 2^{-2 N+2}  \sum_{\Phi_f \in \Lambda} \sum_{ \Phi_{f^\prime} \in \Lambda}
					\sum_{I^\prime} 
							\frac{ v_{I^\prime}  }{ \prod_{\varphi H_0 \in I^\prime} \sqrt{-\varphi(d)} } \\
				& & \times
					\sum_{I}
						(-1)^{ \sum_{\varphi H_0 \in I } f(\varphi H_0) + \sum_{\varphi H_0 \in {I \veebar I^\prime} } f^\prime(\varphi H_0)}
							N_{\Phi_f(I)^*}(a_{\Phi_f} ) N_{ \Phi_{f^\prime}(I)^* }(b_{\Phi_{f^\prime}} ).
\end{eqnarray*}
\end{small}
Therefore, by Lemma \ref{lem:eq3}, 
\begin{eqnarray*}
	\phi_\Lambda (a_{\Phi_f}) 
	\phi_\Lambda  (b_{\Phi_f} )
			&=& 2^{-N+1} \sum_{\Phi_f \in \Lambda} 
					\sum_{I^\prime}
							\frac{ v_{I^\prime} }{ \prod_{\varphi H_0 \in I^\prime} \sqrt{-\varphi(d)} }
									(-1)^{\sum_{\varphi H_0 \in I^\prime } f(\varphi H_0) }  N_{\Phi_f(I^\prime)^*}(a_{\Phi_f}  b_{\Phi_f})  \\
			&=&  \phi_\Lambda (a_{\Phi_f} b_{\Phi_f}).
\end{eqnarray*}

\item $q( \phi_\Lambda(w) ) = 2^{-N} K_0^c \otimes_\RationalField Q_\Lambda(w)$;
it is enough if we can show $q( \phi_\Lambda( (a_\Phi) ) ) = 2^{-N} Q_\Lambda( (a_\Phi) )$ for $(a_\Phi)_{\Phi \in \Lambda} \in \bigoplus_{\Phi \in \Lambda} \kreflex(\Phi)$.
We have 
\begin{eqnarray*}
	q( \phi_\Lambda((a_{\Phi_f})) )
			&=& 2^{-2 N+2}
					\sum_{I} 
							\frac{ q(v_I) }{ \prod_{\varphi H_0 \in I} ( -\varphi(d) ) } 
				\left(
					\sum_{\Phi_f \in \Lambda}
							(-1)^{ \sum_{\varphi H_0 \in I } f(\varphi H_0) } N_{\Phi_f(I)^*}(a_{\Phi_f})
				\right)^2 \\
			&=& 2^{-2 N+1}
				\sum_{I} (-1)^{ \abs{I} }
					\sum_{\Phi_f \in \Lambda} 
						(-1)^{ \sum_{\varphi H_0 \in I } f(\varphi H_0) } N_{\Phi_f(I)^*}(a_{\Phi_f}) \\
			& & \times
					\sum_{\Phi_{f^\prime} \in \Lambda}
						\sum_{\psi^\prime \in G / \hreflex(\Phi_{f^\prime})}
							(-1)^{ \sum_{\varphi H_0 \in I } ({f^\prime} + r_{\Phi_{f^\prime}}(\psi))(\varphi H_0) }
										\psi^\prime ( a_{\Phi_{f^\prime}} ).
\end{eqnarray*}
Since $\iota * f(\varphi H_0) = ( r_{\Phi_0}(\iota) + \iota \cdot f )(\varphi H_0) = f(\varphi H_0) +1$ for any $f$,
\begin{eqnarray*} 
	q( \phi_\Lambda((a_{\Phi_f})) )
			&=& 2^{-2 N+1}
				\sum_{I} 
					\sum_{\Phi_f \in \Lambda} (-1)^{ \sum_{\varphi H_0 \in I } f(\varphi H_0) } N_{\Phi_f(I)^*}(a_{\Phi_f}) \\
			& & \times
					\sum_{\Phi_{f^\prime} \in \Lambda}
						\sum_{\psi^\prime \in G / \hreflex(\Phi_{f^\prime})}
						(-1)^{ \sum_{\varphi H_0 \in I } ({ \iota * f^\prime} + r_{\Phi_{ \iota * f^\prime }}(\psi))(\varphi H_0) }
										\psi^\prime ( a_{\Phi_{f^\prime}} )   \\
			&=& 2^{-2 N+2}
				\sum_{I} 
					\sum_{\Phi_f \in \Lambda}
					\sum_{\Phi_{f^\prime} \in \Lambda}
						(-1)^{ \sum_{\varphi H_0 \in I } f(\varphi H_0) + \sum_{\varphi H_0 \in I } \iota * f^\prime(\varphi H_0) } \\
			& & \times
							N_{\Phi_f(I)^*}(a_{\Phi_f}) N_{\Phi_{\iota * f^\prime}(I)^*}(a_{\Phi_f}).
\end{eqnarray*}

Therefore,
by Lemma \ref{lem:eq3}, 
\begin{eqnarray*}
	q( \phi_\Lambda((a_{\Phi_f})) )
			&=& 2^{-N+1}
					\sum_{\Phi_f \in \Lambda}
						N_{ \Phi_f(\emptyset)^* }( a_{\Phi_f} \overline{a_{\Phi_f}} )  \\
			&=& 2^{-N}
					\sum_{\Phi_f \in \Lambda}
						\sum_{\psi \in G / \hreflex(\Phi_f)} 
										\psi ( a_{\Phi_f} \overline{a_{\Phi_f}} )  
			= 2^{-N} Q_{\Lambda}((a_{\Phi_f})).
\end{eqnarray*}
\end{enumerate} 

It is clear that $q$ and  $Q_\Lambda(w)$ are positive definite.
Since $q( \phi_\Lambda(w) ) = 2^{-N} K_0^c \otimes_\RationalField Q_\Lambda(w)$ for any $w \in K_0^c \otimes_\RationalField ( \bigoplus_{\Phi \in \Lambda} \kreflex(\Phi) )$, $\phi_\Lambda$ must be injective.
Hence, by comparing the dimensions, it is an isomorphism.
Therefore, the assertion holds.
\end{proof}

\section*{Acknowledgements}
\label{}
I want to thank Professor Takayuki Oda for his teaching and advice in my graduate school days.
I also appreciate Professor Hiromichi Yanai for his helpful comments. 




\end{document}